\newtheorem{theorem}{Theorem}[section]
\newtheorem{lemma}[theorem]{Lemma}
\newtheorem{corollary}[theorem]{Corollary}
\newtheorem{question}[theorem]{Question}
\newtheorem{remark}[theorem]{Remark}
\newtheorem{proposition}[theorem]{Proposition}
\newtheorem{definition}[theorem]{Definition}
\newtheorem{example}[theorem]{Example}
\newtheorem{fact}[theorem]{Fact}
\newtheorem{problem}[theorem]{Problem}
\numberwithin{equation}{section}
\newcommand{\CC}{C_k}
\newcommand{\NN}{\mathbb{N}}
\newcommand{\ZZ}{\mathbb{Z}}
\newcommand{\CP}{\mathbf{cp}}
\newcommand{\GG}{\mathfrak{G}}
\newcommand{\w}{\omega}
\newcommand{\KK}{\mathcal{K}}
\newcommand{\VV}{\mathbb{V}}
\newcommand{\IR}{\mathbb{R}}
\newcommand{\Ss}{\mathbb{S}}
\newcommand{\ff}{\mathbb{F}}
\newcommand{\Ff}{\mathfrak{F}}
\newcommand{\Pp}{\mathfrak{P}}
\newcommand{\e}{\varepsilon}
\renewcommand{\phi}{\varphi}
\newcommand{\supp}{\mathrm{supp}}
\newcommand{\spn}{\mathrm{span}}
\newcommand{\TT}{\mathbb{T}}
\newcommand{\Bo}{\mathsf{Bo}}
\title[Maximally almost periodic groups and respecting properties]{Maximally almost periodic groups \\ and respecting properties}
\author[S.~Gabriyelyan]{Saak Gabriyelyan}
\address{Department of Mathematics, Ben-Gurion University of the
Negev, Beer-Sheva, P.O. 653, Israel}
\email{saak@math.bgu.ac.il}
\subjclass[2000]{Primary 22A05, 46A03; Secondary 54H11}
\keywords{Schur property, Glicksberg property, Bohr topology, angelic space, $k_\w$-group, locally quasi-convex group, reflexive group, free locally convex space, $\CC(X)$, Mackey group}
\begin{document}

\begin{abstract}
For a Tychonoff space $X$, denote by $\Pp$ the family of topological properties $\mathcal{P}$ of being  a convergent sequence or being a compact, sequentially compact, countably compact, pseudocompact and functionally bounded subset of $X$, respectively. A maximally almost periodic $(MAP$) group $G$ respects $\mathcal{P}$ if $\mathcal{P}(G)=\mathcal{P}(G^+)$, where $G^+$ is  the group $G$ endowed with the Bohr topology. We study relations between different respecting properties from $\Pp$ and show that the respecting convergent sequences (=the Schur property) is the weakest one among the properties of $\Pp$.
We characterize respecting properties from $\Pp$ in wide classes of $MAP$ topological groups including the class of metrizable $MAP$ abelian groups. Every real locally convex space (lcs) is a quotient space of an lcs with the Schur property, and every locally quasi-convex (lqc) abelian group is a quotient group of an lqc abelian group with the Schur property. It is shown that a reflexive group $G$ has the Schur property or respects compactness  iff its dual group $G^\wedge$ is $c_0$-barrelled or $g$-barrelled, respectively. We prove that an lqc abelian $k_\w$-group respects all properties $\mathcal{P}\in\Pp$. As an application of the obtained results we show that (1) the space $\CC(X)$ is a reflexive group for every separable metrizable space $X$, and  (2) a reflexive abelian group of finite exponent is a Mackey group.
\end{abstract}

\maketitle


\section{Introduction}

Let $X$ be a Tychonoff space. If $\mathcal{P}$ is a topological property, we denote by $\mathcal{P}(X)$ the set of all subspaces of $X$ with $\mathcal{P}$.
Denote by $\mathcal{S}$, $\mathcal{C}$, $\mathcal{SC}$, $\mathcal{CC}$, $\mathcal{PC}$ or $\mathcal{FB}$ the property of being  a convergent sequence or being a compact, sequentially compact, countably compact, pseudocompact and functionally bounded subset of $X$, respectively. In what follows we consider the following families of compact-type topological properties
\[
\mathfrak{P}_0 := \{ \mathcal{S}, \mathcal{C}, \mathcal{SC}, \mathcal{CC}, \mathcal{PC}\} \quad \mbox{ and } \quad \mathfrak{P} :=\mathfrak{P}_0 \cup \{\mathcal{FB}\}.
\]

Let $G$ be a maximally almost periodic ($MAP$) topological group $G$ (for all relevant definitions, see Section \ref{sec:prelim}). We denote by $G^+$ the group $G$ endowed with the Bohr topology. Following \cite{ReT3}, a $MAP$ group $G$ {\em respects} a topological property $\mathcal{P}$  if $\mathcal{P}(G)=\mathcal{P}(G^+)$.

The famous Glicksberg theorem \cite{Gli} states that every locally compact abelian ($LCA$) group respects compactness. If a $MAP$ group $G$ respects compactness we shall say also that $G$ has the {\em  Glicksberg property}. Trigos-Arrieta \cite{TriAr91, Tri91} proved that  countable compactness, pseudocompactness  and functional boundedness are respected by $LCA$ groups. Banaszczyk and Mart\'{\i}n-Peinador \cite{BaMP2} generalized these results to  all nuclear groups. Nuclear groups were introduced and thoroughly studied by Banaszczyk in \cite{Ban}. The concept of  Schwartz topological abelian groups is appeared in \cite{ACDT}. This notion generalizes the well-known notion of a Schwartz locally convex space.  All nuclear groups are Schwartz groups \cite{ACDT}. Au{\ss}enhofer \cite{Aus2} proved that every locally quasi-convex Schwartz group respects compactness.
For a general and simple approach to the theory of properties  respected by $MAP$ topological groups see \cite{Gab-Top-Nul}.

Let $(E,\tau)$ be a  locally convex space (lcs for short), $E'$ the dual space of $E$ and let $\tau_w=\sigma(E,E')$ be the weak topology on $E$. Set $E_w:=(E,\tau_w)$. An lcs $E$ is said to have the {\em  Schur property} if $E$ and $E_w$ have the same convergent sequences, i.e., $\mathcal{S}(E)=\mathcal{S}(E_w)$. Considering $E$ as an additive topological group one can define $E^+$.
If $E$ is a real lcs, it is proved in \cite{ReT} that $E_w$ and $E^+$ have the same compact sets and hence the same convergent sequences. By this reason we shall use of the terminology ``the Schur property'' and ``the  Glicksberg property'' for locally convex spaces and $MAP$ groups simultaneously.
It is easy to see that if a $MAP$ group $G$ has the  Glicksberg property, then it has also the Schur property. In general the Schur property does not imply the Glicksberg property, see \cite[Example~6 (p.~267)]{Wil} and \cite[Example~19.19]{Dom}, or \cite[Proposition~3.5]{Gabr-free-resp} for a more general assertion. However, it is a classical result that a Banach space $E$ has the Schur property if and only if $E$ has the  Glicksberg property.

The aforementioned results motivate us to consider the following two problems. The first problem concerns finding relationships between the properties $\mathcal{P}\in \Pp$. In Section \ref{sec:2} we show that the Schur property is equivalent to the respecting sequential compactness and is the weakest one among the properties of $\Pp_0$, see Proposition \ref{p:Schur=sequential-comp}. Under the additional assumption that $G^+$ is a $\mu$-space, we prove in Theorem \ref{t:Glicksberg-respecting} that the Glicksberg property implies all other  properties $\mathcal{P}\in \Pp$. In Proposition \ref{p:Schur-E-S} and  Theorem \ref{t:Schur-Bohr-angelic} we consider some natural classes of $MAP$ groups in which the Schur property implies the Glicksberg property and the respecting countable compactness.
We show also that the Schur property and the Glicksberg property have a natural categorical characterization, see Proposition \ref{p:Schur-categorical}.


The second natural problem is the following: Characterize respecting properties in concrete classes of $MAP$ groups. As we mentioned above, every locally quasi-convex Schwartz group has the Glicksberg property. However, there are even metrizable reflexive abelian  groups which are not Schwartz groups, see \cite{Gab-Top-Nul}.
%
In  \cite{HGM} Hern\'{a}ndez, Galindo and Macario proved that a reflexive metrizable abelian group $G$ has the Glicksberg property if and only if every non-precompact subset $A$ of $G$ has an infinite subset $B$ which is discrete and $C^\ast$-embedded in the Bohr compactification $bG$ of $G$. This result was generalized by Hern\'{a}ndez and Macario in \cite{HM} who  showed that a complete abelian $g$-group $G$ has the Glicksberg property if and only if $G$  respects functional boundedness if and only if every non-precompact subset $A$ of $G$ has an infinite subset $B$ which is discrete and $C$-embedded in $G^+$. Below we generalize this result, see Theorem \ref{t:Glicksberg-respecting}. Let us recall (see \cite{HM}) that every complete $g$-group $G$ is semi-reflexive and $G^+$ is a $\mu$-space. However, Au{\ss}enhofer found in \cite{Aus} 
an example of a metrizable complete locally quasi-convex abelian group which is not semi-reflexive. Thus the above results do not give a characterization of the Glicksberg property  even in the class of complete metrizable abelian groups.

We showed in \cite{Gab-Top-Nul} that if a complete $MAP$ group respects functional boundedness, then $G^+$ must be  a $\mu$-space. Therefore to obtain respecting properties for a $MAP$ group $G$ we should assume that $G$ and $G^+$ satisfy  some completeness type properties.
We  say that a topological space $X$ is a {\em countably $\mu$-space} if every countable functionally bounded  subset of $X$ has compact closure. Clearly, every $\mu$-space is a countably $\mu$-space, but the converse is not true  in general (see Example \ref{exa:seq-compact-non-compact} below). We shall say that a $MAP$ group $G$ is {\em Bohr angelic} if $G^+$ is angelic. In Theorem \ref{t:Schur-Bohr-angelic} we characterize the Glicksberg property by means of the Bohr topology in the class of Bohr angelic groups $G$  which are countably $\mu$-spaces. The class of Bohr angelic $MAP$ groups  is sufficiently rich since it contains all  $MAP$ abelian groups with a $\GG$-base, see Proposition \ref{p:web-compact-Bohr-angelic}.
The class of topological groups with a $\GG$-base is introduced in \cite{GKL}, it contains all metrizable groups and  is closed under taking completions, quotients, countable products and countable direct sums. Using  Theorem \ref{t:Schur-Bohr-angelic} we prove the following general result.
\begin{theorem}  \label{t:G-base-Schur}
Let $G$ be a $MAP$ abelian group  with a $\GG$-base. If $G$ is a countably $\mu$-space, then the following assertions are equivalent:
\begin{enumerate}
\item[{\rm (i)}] $G$ has the Schur property;
\item[{\rm (ii)}] $G$ has the Glicksberg property;
\item[{\rm (iii)}] $G$ respects  sequential compactness;
\item[{\rm (iv)}] $G$ respects  countable compactness;
\item[{\rm (v)}] every non-functionally bounded subset of $G$ has an infinite subset which is closed and discrete in $G^+$.
\end{enumerate}
If, in addition, $G^+$ is a $\mu$-space, then (i)-(v) are equivalent to the following assertions:
\begin{enumerate}
\item[{\rm (vi)}] $G$ respects  pseudocompactness;
\item[{\rm (vii)}] $G$ respects  functional boundedness.
\end{enumerate}
\end{theorem}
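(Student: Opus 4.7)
The plan is to assemble the statement from general results already established in the paper, so essentially no new group-theoretic work is required. First, I would invoke Proposition \ref{p:web-compact-Bohr-angelic} to deduce from the $\GG$-base hypothesis that $G$ is Bohr angelic. Combined with the standing hypothesis that $G$ is a countably $\mu$-space, this places $G$ exactly in the setting of Theorem \ref{t:Schur-Bohr-angelic}, which will bear the main load.

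To obtain the equivalences (i)--(v) I would combine two packages. Proposition \ref{p:Schur=sequential-comp} identifies the Schur property with respecting sequential compactness, giving (i) $\Leftrightarrow$ (iii), and asserts that the Schur property is the weakest among the properties in $\Pp_0$, so each of (ii) and (iv) implies (i). In the opposite direction, Theorem \ref{t:Schur-Bohr-angelic}, applied in our Bohr angelic countably $\mu$-space setting, supplies (i) $\Rightarrow$ (ii), (i) $\Rightarrow$ (iv), and the characterization (i) $\Leftrightarrow$ (v). Chaining these closes the loop on (i)--(v).

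For the additional part, assuming further that $G^+$ is a $\mu$-space, I would read off (ii) $\Rightarrow$ (vi) and (ii) $\Rightarrow$ (vii) directly from Theorem \ref{t:Glicksberg-respecting}. To close the loop I would argue (vii) $\Rightarrow$ (i) by a short direct computation: if $(x_n)$ is a sequence in $G$ with $x_n \to 0$ in $G^+$, then $A := \{x_n : n \in \NN\} \cup \{0\}$ is compact, and hence functionally bounded, in $G^+$; by (vii) it is functionally bounded in $G$; being countable, the countably $\mu$-space hypothesis forces $\overline{A}$ to be compact in $G$; and since the identity $G \to G^+$ is a continuous bijection between the compact Hausdorff spaces $\overline{A}$ and its image in $G^+$, it is a homeomorphism on $\overline{A}$, so $x_n \to 0$ holds in $G$ as well. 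The same argument, using that pseudocompact subspaces are functionally bounded, yields (vi) $\Rightarrow$ (i).

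The step I expect to need the most care is the last one, (vii) $\Rightarrow$ (i): a priori, respecting functional boundedness does not imply respecting compactness in an arbitrary $MAP$ group, and the argument leans crucially on both the countably $\mu$-space hypothesis on $G$ (to upgrade a countable functionally bounded set to one with compact closure) and the classical fact that the identity $G \to G^+$ is a homeomorphism on compact subsets. All genuinely new content has already been packaged into Proposition \ref{p:web-compact-Bohr-angelic} and Theorem \ref{t:Schur-Bohr-angelic}, so the present proof is essentially an exercise in assembly.
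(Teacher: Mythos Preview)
Your proposal is correct and follows essentially the same route as the paper: invoke Proposition~\ref{p:web-compact-Bohr-angelic}(ii) to get Bohr angelicity from the $\GG$-base, apply Theorem~\ref{t:Schur-Bohr-angelic} for (i)--(v), and use Theorem~\ref{t:Glicksberg-respecting} for (ii)$\Rightarrow$(vi),(vii). The only cosmetic difference is that for (vi)$\Rightarrow$(i) and (vii)$\Rightarrow$(i) the paper simply cites Proposition~\ref{p:Schur=sequential-comp}(iii) and (iv), whereas you spell out the convergent-sequence argument directly; your computation is precisely the content of those items, so nothing new is happening.
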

In particular, since every metrizable space is a $\mu$-space and hence a countably $\mu$-space, Theorem \ref{t:G-base-Schur} gives a characterization of the Glicksberg property in the class of metrizable $MAP$ abelian groups without the restrictive assumption of being a $g$-group as in \cite{HM}. 

An important class of locally convex spaces (lcs for short) is the class of free locally convex spaces $L(X)$ over Tychonoff spaces $X$.  In Section \ref{sec:Free-Schur}
we prove the following result.
\begin{theorem} \label{t:L(X)-Schur}
Let $X$ be a Tychonoff space. Then the free locally convex space $L(X)$ over $X$ respects all properties $\mathcal{P}\in\Pp_0$. If $L(X)$ is complete, then $L(X)$  respects all properties $\mathcal{P}\in\Pp$.
\end{theorem}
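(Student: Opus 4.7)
The plan is first to establish the Schur property for $L(X)$, then lift this to the remaining properties in $\Pp_0$ using structural features of the free locally convex space, and finally handle functional boundedness under completeness.

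For the Schur property, note that since $L(X)$ is a real lcs, its weak topology $\sigma(L(X),L(X)')$ and Bohr topology share the same convergent sequences (as recalled in the introduction). Given a Bohr-null sequence $u_n=\sum_{i=1}^{k_n}a_{n,i}x_{n,i}$ in $L(X)$, weak convergence against every $f\in C(X)=L(X)'$ is exploited by choosing separating continuous functions (via Tychonoff separation on $X$) to force the supports $\{x_{n,i}\}$ to accumulate only on a compact subset of $X$ and the coefficients $a_{n,i}$ to remain uniformly bounded. Once the sequence is confined to a ``finite-type'' piece of $L(X)$---namely $L_n(K)\cap\{\sum a_iy_i:|a_i|\le r\}$ for some $n$, some compact $K\subset X$, and some $r>0$---the weak and original topologies of $L(X)$ agree on this piece, forcing $u_n\to 0$ in $L(X)$. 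By Proposition \ref{p:Schur=sequential-comp} this already yields respecting sequential compactness.

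For the remaining properties in $\Pp_0$, I would prove the structural lemma that every Bohr-compact (respectively countably compact or pseudocompact) subset of $L(X)$ lies in a finite-type piece $L_n(K)\cap\{\sum a_iy_i:|a_i|\le r\}$ as above. The argument is contrapositive: if the supports escape every compact of $X$ or if the coefficients are unbounded, one constructs $f\in C(X)$ against which the set has unbounded values, contradicting that the continuous image in $\RR$ of a Bohr-pseudocompact set must be bounded. On such a finite-type piece the topologies of $L(X)$, $L(X)_w$ and $L(X)^+$ all coincide, since the piece is the continuous image under $(a_1,y_1,\ldots,a_n,y_n)\mapsto\sum a_iy_i$ of a compact subset of $(\RR\times X)^n$ and all three topologies pull back to the unique compact topology on the preimage. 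Hence compactness, countable compactness, and pseudocompactness transfer from $L(X)^+$ back to $L(X)$.

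For the second assertion, assume $L(X)$ is complete. The same structural lemma applies to functionally bounded subsets of $L(X)^+$, placing them inside a finite-type piece where they are automatically functionally bounded in $L(X)$; moreover completeness combined with this containment makes $L(X)^+$ into a $\mu$-space, so Theorem \ref{t:Glicksberg-respecting} applies and respecting functional boundedness follows from the already-established Glicksberg property. The main obstacle is the structural containment lemma itself---that Bohr-compact, pseudocompact and functionally bounded subsets all live in finite-type pieces---since this encodes the interplay between the flexibility of $C(X)$ as test space and the finite-support nature of $L(X)$; once this lemma is in place, the remaining work reduces to a routine comparison of topologies on a compact finite-type image.
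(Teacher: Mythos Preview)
Your structural lemma is false, and this undermines the entire argument. You claim that every Bohr-compact (or even Bohr-null-sequence) subset of $L(X)$ sits inside a finite-type piece $L_n(K)\cap\{\sum a_iy_i:|a_i|\le r\}$ with a \emph{fixed} bound $n$ on the number of support points. But consider $X=\mathbf{s}=\{e_n\}_{n\ge 0}$ a convergent sequence with limit $e_0$, and set $d_k=2^k$ and
\[
z_k:=\frac{1}{d_{k+1}-d_k}\big(e_{d_k+1}+\cdots+e_{d_{k+1}}\big).
\]
Since $z_k$ is a convex combination of points tending to $e_0$, local convexity gives $z_k\to e_0$ in $L(\mathbf{s})$; yet $z_k$ has support of cardinality $2^k$, so $\{z_k-e_0\}\cup\{0\}$ is a null sequence in $L(X)$ (hence Bohr-null) that escapes every $L_n(K)$. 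Testing against functions in $C(X)$ can force $\supp(A)$ into a compact set and bound $\sup_{\chi\in A}\|\chi\|$, but it cannot bound the number of support points; your ``compact image of $(\RR\times X)^n$'' picture, while correct for each fixed $n$, therefore does not cover the sets you need it to cover.

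The paper avoids this obstacle entirely by passing to the completion: Uspenski\u{\i}'s theorem identifies $\overline{L(X)}$ with $(M_c(\mu X),\tau_e)$, and the key point (Proposition~\ref{p:bounded-in-Mc}) is that this completion is \emph{semi-Montel}. Thus every bounded set---in particular every set that is functionally bounded in the Bohr topology, by Proposition~\ref{p:FB-weakly-bound}---has compact closure in the completion, and Proposition~\ref{p:mu-space-Schur} immediately gives all $\mathcal{P}\in\Pp$ for the completion; Proposition~\ref{pSubSchur} then pulls the $\Pp_0$ properties back to the subspace $L(X)$. In effect the paper replaces your finite-dimensional ``pieces'' by the single compact set $[C;\e]^\circ\subset M_c(\mu X)$, which genuinely absorbs arbitrary support sizes. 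If you want to repair your direct approach, you would need to show that a bounded set in $L(X)$ has compact closure in $\overline{L(X)}$---but that is exactly the semi-Montel statement, so you are led back to the paper's route.
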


It is well known that  every Banach space $E$ is a quotient space of a Banach space with the Schur property (more precisely, $E$ is a quotient space of the space $\ell_1(\Gamma)$ for some set $\Gamma$). We generalize this result to all real locally convex spaces and all abelian Hausdorff topological groups.

\begin{corollary} \label{c:L(X)-quotients}
Every real locally convex space $E$ is a quotient space of an lcs which weakly respects all properties $\mathcal{P}\in\Pp_0$.
\end{corollary}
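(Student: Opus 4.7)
The plan is to realize $E$ as a locally convex quotient of a free locally convex space and then invoke Theorem~\ref{t:L(X)-Schur}. Let $X$ denote $E$ regarded merely as a Tychonoff space. By the defining universal property of the free lcs, the identity map $X\to E$ extends uniquely to a continuous linear map $\pi\colon L(X)\to E$; since its image is a vector subspace of $E$ containing $X=E$ pointwise, $\pi$ is surjective.

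Next I would verify that $\pi$ realizes $E$ as a topological quotient of $L(X)$ in the category of locally convex spaces. A universal-property argument does the job cleanly: for any lcs $F$ and any linear map $f\colon E\to F$, continuity of the composite $f\circ\pi\colon L(X)\to F$ is equivalent, by the universal property of $L(X)$, to continuity of $f\circ\pi|_X = f|_X\colon X\to F$, which is in turn equivalent to continuity of $f\colon E\to F$ in $E$'s original topology, since the underlying topological spaces of $X$ and $E$ coincide. Applying this observation with $F=E$ carrying first the original and then the quotient lcs topology, together with the identity map, shows that the two topologies on $E$ agree, so $\pi$ is a topological quotient.

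With $E$ presented as a quotient of $L(X)$, Theorem~\ref{t:L(X)-Schur} yields $\mathcal{P}(L(X))=\mathcal{P}(L(X)^+)$ for every $\mathcal{P}\in\Pp_0$. Since $L(X)$ is a real lcs, the coincidence $\mathcal{P}(L(X)^+)=\mathcal{P}(L(X)_w)$ holds for $\mathcal{P}\in\{\mathcal{S},\mathcal{C}\}$ by \cite{ReT}, and by analogous arguments for the remaining properties in $\Pp_0$. Combining these equalities yields $\mathcal{P}(L(X))=\mathcal{P}(L(X)_w)$ for every $\mathcal{P}\in\Pp_0$, which is precisely the assertion that $L(X)$ weakly respects every property in $\Pp_0$. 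Together with the quotient map $\pi\colon L(X)\to E$, this completes the proof.

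The substantive content lies entirely in Theorem~\ref{t:L(X)-Schur} and the Remus--Trigos-Arrieta coincidence between weak and Bohr topologies on real lcs. The only technical point specific to this corollary is the verification that the canonical evaluation map $L(X)\to E$ is a topological quotient, which I expect to be the main (if minor) obstacle.
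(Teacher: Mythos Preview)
Your proof is correct and follows essentially the same route as the paper: realize $E$ as a quotient of $L(E)$ via the universal property, then invoke Theorem~\ref{t:L(X)-Schur} together with the coincidence $\mathcal{P}(L(E)_w)=\mathcal{P}(L(E)^+)$ (which is Proposition~\ref{p-Weak=+} in the paper, so you can cite that directly rather than ``analogous arguments''). The paper's quotient argument is slightly shorter---since $E$ is a topological subspace of $L(E)$, the map $\overline{id}$ admits the inclusion as a continuous section and is therefore a quotient map---but your universal-property verification is equally valid.
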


\begin{corollary} \label{c:A(X)-quotients}
Every Hausdorff abelian topological group $G$ is a quotient group of a locally quasi-convex abelian group which respects all properties $\mathcal{P}\in\Pp_0$.
\end{corollary}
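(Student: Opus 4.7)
Given a Hausdorff abelian topological group $G$, regard it as a Tychonoff space $X := G$ and form the free abelian topological group $A(X)$. I would verify: (a) $A(X)$ is locally quasi-convex; (b) $A(X)$ respects every $\mathcal{P}\in\Pp_0$; and (c) the canonical continuous homomorphism $\pi\colon A(X)\to G$ extending $\mathrm{id}_X$ is a topological quotient onto $G$.

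For (a) and (b) the key tool is Uspenski\u{\i}'s embedding theorem: $A(X)$ is topologically isomorphic to the subgroup of the free locally convex space $L(X)$ generated by $X$, endowed with the subspace topology. Since $L(X)$ is an lcs, it is lqc as an abelian group, and subgroups of lqc groups are lqc, giving (a). For (b), by Theorem~\ref{t:L(X)-Schur} the lcs $L(X)$ respects every $\mathcal{P}\in\Pp_0$; to transfer this to $A(X)$ I use the elementary observation that for any subgroup $H\leq K$ of an abelian topological group $K$, the Bohr topology $H^+$ is \emph{finer} than the restriction $K^+|_H$, because restrictions to $H$ of continuous characters of $K$ are continuous characters of $H$ and thus are among the generators of $H^+$. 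Consequently, if $A\subseteq A(X)$ satisfies $A\in\mathcal{P}(A(X)^+)$, then $A$ still has $\mathcal{P}$ in the coarser topology $L(X)^+|_{A(X)}$, hence in $L(X)^+$; Theorem~\ref{t:L(X)-Schur} yields $A\in\mathcal{P}(L(X))$, and since $A\subseteq A(X)$ carries the subspace topology from $L(X)$ we conclude $A\in\mathcal{P}(A(X))$. Each property in $\Pp_0$ descends from a finer topology to a coarser one (for pseudocompactness this uses $C(A,\tau')\subseteq C(A,\tau)$ whenever $\tau'\subseteq\tau$), so the argument is uniform in $\mathcal{P}$.

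Statement (c) is standard: the identity $X=G\to G$ extends via the universal property of $A(X)$ to a continuous homomorphism $\pi\colon A(X)\to G$, and surjectivity is automatic because $X=G$. Openness uses that $X$ embeds as a topological subspace of $A(X)$ and $\pi|_X=\mathrm{id}_X$: for any open neighborhood $U$ of $0$ in $A(X)$, the set $U\cap X$ is open in $G$ and contained in $\pi(U)$, so $\pi$ is a continuous open surjective homomorphism and hence a topological quotient map.

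The most subtle point is step (b). One might naively hope that $A(X)^+$ coincides with the restriction of $L(X)^+$, but this equality generally fails, because a continuous character $X\to\TT$ need not lift to a continuous real-linear functional on $L(X)$. The correct move is to exploit only the one-sided inclusion $A(X)^+\supseteq L(X)^+|_{A(X)}$ together with the monotonicity of each $\mathcal{P}\in\Pp_0$ under passage to a coarser topology; once this is recognized, the transfer from $L(X)$ to $A(X)$, and therefore the corollary, follows cleanly from Theorem~\ref{t:L(X)-Schur}.
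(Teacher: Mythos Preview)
Your proposal is correct and follows essentially the same route as the paper: form $A(G)$, embed it in $L(G)$ via Uspenski\u{\i}, inherit local quasi-convexity, transfer the respecting properties from $L(G)$ to the subgroup $A(G)$, and observe that the canonical extension of $\mathrm{id}_G$ is a quotient map. The only cosmetic difference is packaging: the paper invokes Corollary~\ref{c:A(X)-Schur}, whose proof in turn cites Proposition~\ref{pSubSchur} (subgroups of a $MAP$ abelian group inherit each respecting property $\mathcal{P}\in\Pp_0$), whereas you reprove that inheritance step inline via the observation $H^+\supseteq K^+|_H$; similarly, the paper's ``clearly $\overline{id}$ is a quotient map'' is exactly your openness argument using $X\hookrightarrow A(X)$.
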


As an application we prove in Section \ref{sec:Free-Schur} that the space $\CC(X)$ is a reflexive group for every separable metrizable space $X$, see Proposition \ref{p:reflexive-C(X,G)}.

In Section \ref{sec:3} we obtain  dual characterizations of the Schur property and the Glicksberg property in the class of reflexive abelian groups by showing that a reflexive group $G$ has the Schur property or the Glicksberg property  if and only if its dual group $G^\wedge$ is $c_0$-barrelled or $g$-barrelled, respectively (see Proposition \ref{p:Mackey-g-barrelled}). Another important class of topological groups is the class of abelian $k_\w$-groups. This class contains all dual groups of metrizable abelian groups, see \cite{Aus,Cha}.
In Theorem \ref{t:k-omega-Schur} we show that every locally quasi-convex $k_\w$-group $G$  respects all properties $\mathcal{P}\in \Pp$ (note that the condition of being a locally quasi-convex group cannot be omitted, see  Example \ref{exa:k-omega-not-Schur}).

In the last section we define some Glicksberg type properties and apply the obtained results to show that a reflexive abelian group of finite exponent is a Mackey group, see Theorem \ref{t:Mackey-finite-exp}. 


\section{Preliminary results} \label{sec:prelim}


Denote by $\mathbf{TG}$  ($\mathbf{TAG}$) the category of all Hausdorff (respectively, abelian) topological groups and continuous homomorphisms. A compact group $bX$ is called the {\em Bohr compactification} of $(X,\tau)\in \mathbf{TG}$ if there exists a continuous homomorphism $i$ from $X$ onto a dense subgroup of $bX$ such that the pair $(bX,i)$ satisfies the following {\em universal property}: If $p:X\to C$ is a continuous homomorphism into a compact group $C$, then there exists a continuous homomorphism $j^p: bX \to C$ such that $p=j^p\circ i$. Following von Neumann \cite{Neu},  the group $X$ is called {\em maximally almost periodic} ($MAP$) if the group $X^+$ is Hausdorff, where $X^+:=(X,\tau^+)$ is the group $X$ endowed with the Bohr topology $\tau^+$ induced from $bX$. The family $\mathbf{MAP}$ ($\mathbf{MAPA}$) of all $MAP$ (respectively, $MAP$ abelian) topological groups is a subcategory of $\mathbf{TG}$ (respectively, $\mathbf{TAG}$).
Every  irreducible representation of a (pre)compact group $X$ is finite-dimensional,  see  \cite[22.13]{HR1}. For an $X\in \mathbf{MAP}$, we denote by $\widehat{X}$  the set of all  (equivalence classes of) finite-dimensional irreducible representations of $X$. The {\em Bohr functor} $\mathfrak{B}$ on $\mathbf{MAP}$ is defined by  $\mathfrak{B}(X):=X^+$ for a $MAP$ group $X$ and $\mathfrak{B}(T)=T$ if $T:X\to Y$ is a continuous homomorphism. Denote by $\mathbf{PCom}$ the class of all precompact  groups.
If  a $MAP$ group $(G,\tau)$ is abelian, then every $\pi\in \widehat{G}$ is one-dimensional (indeed, since the unitary group of a finite-dimensional Hilbert space is compact, by the universal property, $\pi$ can be extened to $\widehat{\pi}\in \widehat{bG}$ and \cite[22.17]{HR1} applies), so $\widehat{G}$ coincides with the group of all continuous characters of $G$ denoted also by $\widehat{G}$. In  this case   $\tau^+=\sigma(G,\widehat{G})$, where $\sigma(G,\widehat{G})$ is the smallest group topology on $G$ for which the elements of $\widehat{G}$ are continuous.

Denote by $\mathbb{S}$ the unit circle group and set $\Ss_+ :=\{z\in  \Ss:\ {\rm Re}(z)\geq 0\}$.
Let $G$ be an abelian topological group.   A character $\chi\in \widehat{G}$  is a continuous homomorphism from $G$ into $\mathbb{S}$. A subgroup $H$ of $G$ is called {\em dually embedded} if every continuous character of $H$ can be extended to a continuous character of $G$.
A subset $A$ of $G$ is called {\em quasi-convex} if for every $g\in G\setminus A$ there exists   $\chi\in \widehat{G}$ such that $\chi(g)\notin \Ss_+$ and $\chi(A)\subseteq \Ss_+$.
If $A\subseteq G$ and $B\subseteq \widehat{G}$ set
\[
A^\triangleright :=\{ \chi\in \widehat{G}: \chi(A)\subseteq \Ss_+\}, \quad B^\triangleleft:=\{ g\in G: \chi(g)\in\Ss_+ \; \forall \chi\in B\}.
\]
Then $A$ is quasi-convex if and only if $A^{\triangleright\triangleleft}=A$. The set $\mathrm{qc}(A):=\bigcap_{\chi\in A^{\triangleright}} \chi^{-1} (\Ss)$ is called the {\em quasi-convex hull } of $A$.
An abelian topological group $(G,\tau)$ is called {\em locally quasi-convex} if it admits a neighborhood base at the neutral element $0$ consisting of quasi-convex sets. If $G$ is $MAP$, then the sets $\mathrm{qc}(U)$, where $U$ is a neighborhood of zero in $G$, form a neighborhood base of a locally quasi-convex group topology $\tau_{\mathrm{qc}}$, we set $G_{qc} :=(G,\tau_{\mathrm{qc}})$.  The class $\mathbf{LQC}$ of all abelian locally quasi-convex groups is one of the most important subclasses of the class $\mathbf{MAPA}$. Every $LCA$ group is locally quasi-convex. More generally, every nuclear group is locally quasi-convex, see \cite[Theorem~8.5]{Ban}.
The dual group $\widehat{G}$ of $G$ endowed with the compact-open topology is denoted by $G^{\wedge}$. The homomorphism $\alpha_G : G\to G^{\wedge\wedge} $, $g\mapsto (\chi\mapsto \chi(g))$, is called {\em the canonical homomorphism}. If $\alpha_G$ is a topological isomorphism the group $G$ is called {\em  reflexive}.
In the dual group $\widehat{G}$, we denote by $\sigma(\widehat{G},G)$ the topology of pointwise convergence. Recall that a subset $A$ of $\widehat{G}$ is called {\em equicontinuous} if for every $\e>0$ there is a neighborhood $U$ of zero in $G$ such that
\[
|\chi(x)-1|<\e, \quad \forall x\in U, \; \forall \chi\in A.
\]
We shall use the following fact, see \cite{Nob2}.
\begin{fact} \label{f:Mackey-Ban}
Let $U$ be a neighborhood of zero of an abelian topological group $G$. Then  $U^\triangleright$ is an equicontinuous quasi-convex compact subset of $G^\wedge$. Consequently, a subset $A$ of $G^\wedge$ is equicontinuous if and only if $A\subseteq V^\triangleright$ for some neighborhood $V$ of zero.
\end{fact}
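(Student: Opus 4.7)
The plan is to verify in turn that $U^\triangleright$ is equicontinuous, quasi-convex, and compact in $G^\wedge$, and then to deduce the ``consequently'' clause. I expect the equicontinuity step to be the only nontrivial ingredient: quasi-convexity is purely formal polar calculus, compactness reduces to equicontinuity via a standard Ascoli-type argument, and the final equivalence is a direct translation between the inequality $|\chi(x)-1|<\sqrt 2$ and the condition $\chi(x)\in\Ss_+$.

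For equicontinuity, the key input is the elementary fact that for each $\e>0$ there exists $n=n(\e)\in\NN$ such that whenever $z\in\Ss$ satisfies $z,z^2,\dots,z^n\in\Ss_+$, one has $|z-1|<\e$. Indeed, writing $z=e^{i\theta}$ with $\theta\in(-\pi,\pi]$, an easy induction on $k$ shows that the conditions $z^j\in\Ss_+$ for $j=1,\dots,k$ force $|\theta|\leq\pi/(2k)$, so for large $n$ one has $|z-1|=2|\sin(\theta/2)|<\e$. Given this, I choose a neighborhood $V$ of zero in $G$ with $V+\cdots+V\subseteq U$ ($n$ summands). For every $\chi\in U^\triangleright$ and every $x\in V$, the elements $kx$ lie in $U$ for $k=1,\dots,n$, so $\chi(x)^k=\chi(kx)\in\Ss_+$, and hence $|\chi(x)-1|<\e$ uniformly in $\chi\in U^\triangleright$.

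Quasi-convexity of $U^\triangleright$ (in the obvious symmetric sense, using the evaluation pairing $G\times\widehat{G}\to\Ss$) is purely formal: the Galois-type inclusions $U\subseteq U^{\triangleright\triangleleft}$ and $B\subseteq B^{\triangleleft\triangleright}$ applied to $B=U^\triangleright$ together yield $U^\triangleright=U^{\triangleright\triangleleft\triangleright}$, which is the defining identity. For compactness, I view $U^\triangleright$ inside the compact product space $\Ss^G$ with the topology of pointwise convergence. A pointwise limit of characters $\chi_\alpha\in U^\triangleright$ remains a group homomorphism, still maps $U$ into the closed set $\Ss_+$, and is continuous because of the equicontinuity established above; hence $U^\triangleright$ is closed in $\Ss^G$ and so compact in the pointwise topology of $\widehat G$. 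Since on equicontinuous families the pointwise and compact-open topologies coincide, $U^\triangleright$ is also compact in $G^\wedge$.

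For the final clause, one direction is immediate: if $A\subseteq V^\triangleright$, then $A$ is equicontinuous by the first step. Conversely, suppose $A$ is equicontinuous and apply the definition with $\e=1<\sqrt 2$ to obtain a neighborhood $V$ of zero with $|\chi(x)-1|<\sqrt 2$ for all $x\in V$ and $\chi\in A$. Since $|z-1|^2=2-2{\rm Re}(z)$ on $\Ss$, this forces ${\rm Re}(\chi(x))\geq 0$, i.e., $\chi(V)\subseteq\Ss_+$, so $A\subseteq V^\triangleright$, completing the argument.
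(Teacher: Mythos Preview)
Your proof is correct. The paper itself does not prove this statement: it is recorded as a Fact with a citation to Noble \cite{Nob2}, so there is no in-paper argument to compare against. Your write-up supplies exactly the standard self-contained argument one would expect---the ``powers stay in $\Ss_+$'' trick for equicontinuity, formal polar calculus for quasi-convexity, Ascoli for compactness, and the $|z-1|^2=2-2\,\mathrm{Re}(z)$ computation for the final equivalence. Two cosmetic remarks: in the quasi-convexity step it would be cleaner to say explicitly that the separating functional is $\alpha_G(g)\in G^{\wedge\wedge}$ for some $g\in U$ with $\eta(g)\notin\Ss_+$, since quasi-convexity in $G^\wedge$ is defined via $G^{\wedge\wedge}$ rather than via $G$ directly; and in the last paragraph, applying equicontinuity with $\e=1$ actually yields $|\chi(x)-1|<1$, which is stronger than the $<\sqrt{2}$ you then use---this is harmless but reads oddly.
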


Let $X$ and $Y$ be Tychonoff spaces. We denote by  $C_p(X,Y)$ the space $C(X,Y)$ of all continuous functions from $X$ to $Y$ endowed with the pointwise topology. If $Y=\IR$, set $C(X,\IR):=C(X)$.

A subset $A$ of a topological space $X$ is called
\begin{enumerate}
\item[$\bullet$] {\em relatively compact} if its closure ${\bar A}$ is compact;
\item[$\bullet$] {\em relatively countably compact} if each countably infinite subset in $A$ has a cluster point in $X$;
\item[$\bullet$] {\em relatively sequentially compact} if each sequence in $A$ has a subsequence converging to a point of $X$;
\item[$\bullet$] {\em functionally bounded in $X$} if every $f\in C(X)$ is bounded on $A$.
\end{enumerate}

Recall that a Hausdorff topological space $X$ is called
\begin{enumerate}
\item[$\bullet$] a {\em $k_\omega$-space}  if it is the inductive limit of an increasing sequence $\{ C_n\}_{n\in\NN}$ of its compact subsets;
\item[$\bullet$] a {\em $k_\omega$-group} if $X$ is a topological group whose underlying space is a $k_\w$-space;
\item[$\bullet$] a {\em $\mu$-space} if every functionally bounded subset of $X$ is relatively compact;
\item[$\bullet$] an {\em  $(E)$-space} if its relatively countably compact subsets are relatively compact (\cite{Gro});
\item[$\bullet$] a {\em \v{S}mulyan-space} or a {\em $\check{S}$-space} if its compact subsets are sequentially compact;
\item[$\bullet$] an {\em angelic space} if (1) every relatively countably compact subset of $X$ is relatively compact, and (2) any compact subspace of $X$ is Fr\'{e}chet--Urysohn.
\end{enumerate}

Note that any subspace of an angelic space is angelic, and a subset $A$ of an angelic space $X$ is compact if and only if it is countably compact if and only if $A$ is sequentially compact, see Lemma 0.3 of \cite{Pryce}. Note also that if $\tau$ and $\nu$ are regular topologies on a set $X$ such that $\tau\leq\nu$ and the space $(X,\tau)$ is  angelic, then the space $(X,\nu)$ is also angelic, see \cite{Pryce}.

We need also the following property stronger than the property of being a countably $\mu$-space. A  topological group $G$ is said to have a {\em $\CP$-property} if every separable precompact subset of $X$ has compact closure.
If a topological group $G$ is complete, the closure $\overline{A}$ of each precompact subset $A$ is compact. 
So every complete topological group has the $\CP$-property. However there is a non-complete group with the $\CP$-property which is not a $\mu$-space.

\begin{example} \label{exa:seq-compact-non-compact} {\em
There is a sequentially compact non-compact abelian group $H$ which has the $\CP$-property. Indeed, let $G:= X^\kappa$, where $X$ is a metrizable compact abelian group and the cardinal $\kappa$ is uncountable. For $g=(x_i)_{i\in\kappa} \in G$, set $\mathrm{supp}(g):=\{ i\in\kappa: x_i \not= 0\}$ and define
\[
H:= \{ g\in G: \; |\mathrm{supp}(g)| \leq\aleph_0 \}.
\]
Then $H$ with the induced topology is a proper dense subgroup of $G$. Any countable subset of $H$ is contained in a countable product $Y$ of copies of $X$. Since $Y$  is a compact and metrizable subgroup of $H$, we obtain that the group $H$ is sequentially compact  with the $\CP$-property. It is easy to see that every continuous function on $H$ is bounded.  Therefore $H$ is not compact, and hence $H$ is not a $\mu$-space. Note also that $H$ is Fr\'{e}chet--Urysohn by \cite{Nob}. }
\end{example}

We shall use the following result in which (i) is known but hard to locate explicitly stated.
\begin{lemma} \label{l:func-bounded-precompact}
Let $G$ be a Hausdorff abelian topological group. Then:
\begin{enumerate}
\item[{\rm (i)}] every functionally bounded subset $A$ of $G$ is precompact;
\item[{\rm (ii)}] if $G$ has the $\CP$-property, then a separable subset $B$ of $G$ is functionally bounded if and only if $B$ is precompact.
\end{enumerate}
\end{lemma}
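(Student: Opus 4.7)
The plan for (i) is to argue by contrapositive: assuming $A \subseteq G$ is not precompact, I will produce a continuous function $f \in C(G)$ which is unbounded on $A$. The failure of precompactness supplies a symmetric open neighborhood $U$ of $0$ not coverable by finitely many translates; a routine induction then yields a sequence $(a_n)_{n \in \NN} \subseteq A$ with $a_n - a_m \notin U$ for all $n \neq m$. The Birkhoff--Kakutani pseudometrization then produces a continuous translation-invariant pseudometric $d$ on $G$ such that $\{x \in G : d(x,0) < 1\} \subseteq U$, so that $d(a_n, a_m) \geq 1$ whenever $n \neq m$.

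Given this $d$-separated sequence, the next step is to sprinkle continuous bumps of decreasing $d$-width about each $a_n$. Define $\phi_n(x) := \max\{0,\, 1 - 3n\cdot d(x, a_n)\}$, so that $\phi_n$ is continuous, $\phi_n(a_n)=1$, and the $d$-support of $\phi_n$ lies inside the ball of radius $1/(3n)$ around $a_n$. Set $f(x) := \sum_{n \in \NN} n\,\phi_n(x)$. The triangle inequality $d(x, a_n) + d(x, a_m) \geq d(a_n, a_m) \geq 1$ forces $d(x, a_n) < 1/2$ for at most one index $n_0 = n_0(x)$, so that on a sufficiently small $d$-neighborhood of any point $x$ the defining series for $f$ reduces to the single continuous summand $n_0\,\phi_{n_0}$ (or is identically zero). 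Hence $f$ is well-defined and continuous on $G$, and $f(a_n) = n$ shows that $A$ is not functionally bounded.

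For (ii), the forward implication is exactly (i); conversely, if $B$ is a separable precompact subset of $G$, then the $\CP$-property yields that $\overline{B}$ is compact in $G$, whence $B \subseteq \overline{B}$ is functionally bounded. The main subtle point of the whole argument lies in part (i), namely checking continuity of $f$ at points $x$ lying in no bump support but potentially approached by bump supports of arbitrarily large index; the triangle-inequality observation above is precisely what rules out this pathology, by confining every $x$ to at most one locally active bump.
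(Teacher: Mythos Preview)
Your proof is correct and follows essentially the same strategy as the paper: arguing (i) by contrapositive, extracting from a non-precompact $A$ an infinite uniformly discrete sequence, and then showing such a sequence cannot be functionally bounded. The paper outsources these two steps to external references (Theorem~5 of \cite{BGP} for the uniformly discrete subset, Lemma~2.1 of \cite{Gab-Top-Nul} for the unbounded function), whereas you reconstruct both explicitly via the Birkhoff--Kakutani pseudometric and a locally finite bump sum; the continuity verification you flag as the subtle point is handled cleanly by the triangle-inequality bound $d(a_n,a_m)\ge 1$, which confines every point to at most one active bump with room to spare.
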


\begin{proof}
(i) If $A$ is not precompact, Theorem 5 of \cite{BGP} implies that $A$ has an infinite uniformly discrete subset $C$, i.e., there is a neighborhood $U$ of zero in $G$ such that $c-c' \not\in U$ for every distinct $c,c'\in C$. So $C$ is not functionally bounded by Lemma 2.1 of \cite{Gab-Top-Nul}, a contradiction.

(ii) follows from (i) and the $\CP$-property.
\end{proof}

Following Orihuela \cite{Orihuela}, a  Hausdorff topological space $X$ is called {\em web-compact}  if there is a nonempty subset $\Sigma$ of $\NN^\NN$ and a family $\{ A_{\alpha}: \alpha\in \Sigma\}$ of subsets of $X$ such that, if
\[
C_{n_1\dots n_k} :=\bigcup \{ A_\beta: \beta=(m_k)\in\Sigma, \; m_1=n_1,\dots,m_k=n_k\}, \quad \forall \alpha=(n_k)\in\Sigma,
\]
the following two conditions hold:
    \begin{enumerate}
    \item[{\rm (i)}] $\overline{\bigcup\{ A_{\alpha}: \alpha\in \Sigma\}}=X$, and
    \item[{\rm (ii)}] if $\alpha=(n_k)\in\Sigma$ and $x_k\in C_{n_1\dots n_k}$ for all $k\in\NN$, then the sequence $\{ x_k\}_{k\in\NN}$ has a cluster point in $X$.
     \end{enumerate}
The class of web-compact spaces is sufficiently rich, see \cite[\S~4.3]{kak}. In particular, every separable space is web-compact.
In what follows we shall use repeatedly the following result, see Proposition 4.2 of \cite{kak}, which follows from a deep result of Orihuela \cite{Orihuela}.

\begin{fact} \label{f:Orihuela-Cp}
If $X$ is web-compact, then the group $C_p(X,\Ss)$  is angelic.
\end{fact}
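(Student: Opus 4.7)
The plan is to derive the statement from Orihuela's classical angelicity theorem for the pointwise function space $C_p(X,\RR)$ and then exploit the fact that $\Ss$ sits topologically inside $\RR^2$. Since $\Ss\subseteq\RR^2$ as a subspace, pointwise evaluation of the two real coordinates yields a topological embedding
\[
C_p(X,\Ss)\hookrightarrow C_p(X,\RR^2)\cong C_p(X,\RR)\times C_p(X,\RR).
\]
Recalling that subspaces of angelic spaces are angelic and that finite products of angelic regular spaces are angelic (a classical theorem of Pryce/Floret), the problem reduces to showing that $C_p(X,\RR)$ is angelic whenever $X$ is web-compact.

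For the latter, fix a relatively countably compact subset $H\subseteq C_p(X,\RR)$ and a function $f\in\overline{H}$. The goal is to produce a sequence $(f_n)\subseteq H$ converging pointwise to $f$, which simultaneously yields both clauses in the definition of angelicity. First I would choose a countable subset $\Sigma_0\subseteq\Sigma$ and set $D:=\bigcup_{\alpha\in\Sigma_0}A_\alpha$, so that $D$ is countable. Since $\RR^D$ is metrizable, a diagonal argument based on the relative countable compactness of $H$ restricted to $D$ produces a sequence in $H$ that converges to $f$ pointwise on $D$. I would then inductively enlarge $\Sigma_0$, forcing the candidate sequence to absorb each would-be off-$D$ cluster function, and conclude pointwise convergence everywhere on $X$ via a Grothendieck-style double-limit exchange. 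Axiom (ii) in the definition of web-compactness --- that any $x_k\in C_{n_1\dots n_k}$ admits a cluster point in $X$ --- is exactly what legitimises the exchange.

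The main obstacle is precisely this double-limit step: once one has pointwise convergence on the countable set $D$, nothing a priori prevents the sequence $(f_n)$ from accumulating away from $f$ on $X\setminus D$. Orihuela's insight in \cite{Orihuela} is that condition (ii) of web-compactness encodes precisely the amount of compactness of $X$ needed to propagate countable approximation to the full domain, because any bad accumulation off $D$ can be traced along some branch $(n_k)$ of $\Sigma$ and detected at a cluster point inside $X$. Once this propagation is carried out, both conclusions --- relatively countably compact sets of $C_p(X,\RR)$ are relatively compact, and compact subsets are Fr\'{e}chet--Urysohn --- fall out from the same sequential construction, and transfer to $C_p(X,\Ss)$ through the embedding above.
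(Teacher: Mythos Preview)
The paper does not prove this Fact; it merely cites Proposition~4.2 of \cite{kak} (itself resting on Orihuela's theorem \cite{Orihuela}). So there is no ``paper's proof'' to match, and your reduction to Orihuela's result for $C_p(X,\RR)$ is exactly the intended route.

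There is, however, a genuine gap in your reduction. The assertion ``finite products of angelic regular spaces are angelic (a classical theorem of Pryce/Floret)'' is \emph{false} in general: angelicity is not productive, and neither Pryce nor Floret proves such a statement. What survives is that a product of an angelic space with a \emph{compact} space is angelic, or that certain strengthened forms of angelicity are finitely productive --- neither of which you have here for $C_p(X,\RR)\times C_p(X,\RR)$ as stated.

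The fix is immediate and keeps your argument intact: rather than invoking a product theorem, observe that
\[
C_p(X,\RR)\times C_p(X,\RR)\;\cong\;C_p(X\sqcup X,\RR),
\]
where $X\sqcup X$ is the topological disjoint sum. It is straightforward to check from the definition that a finite disjoint sum of web-compact spaces is web-compact (take $\Sigma':=\{1,2\}\times\Sigma\subseteq\NN^\NN$ after an obvious re-indexing, with $A'_{(i,\alpha)}$ the copy of $A_\alpha$ in the $i$-th summand). Orihuela's theorem then gives directly that $C_p(X\sqcup X,\RR)$ is angelic, and your embedding $C_p(X,\Ss)\hookrightarrow C_p(X,\RR^2)\cong C_p(X\sqcup X,\RR)$ together with the hereditary nature of angelicity finishes the job. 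The second half of your proposal (the outline of Orihuela's double-limit argument) is then superfluous --- you are simply \emph{citing} his theorem, as the paper does.
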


Following \cite{GKL}, a topological group $G$ is said to have a {\em $\GG$-base} if there is a base $\{ U_\alpha : \alpha\in\NN^\NN\}$ of neighborhoods at the identity such that $U_\beta \subseteq U_\alpha$ whenever $\alpha\leq\beta$ for all $\alpha,\beta\in\NN^\NN$, where $\alpha=(\alpha(n))_{n\in\NN}\leq \beta=(\beta(n))_{n\in\NN}$ if $\alpha(n)\leq\beta(n)$ for all $n\in\NN$. Below we give sufficient conditions on a $MAP$ abelian group $G$ or its dual group $G^\wedge$ to be Bohr angelic.

\begin{proposition}  \label{p:web-compact-Bohr-angelic}
Let $(G,\tau)$ be a  $MAP$ abelian group.
\begin{enumerate}
\item[{\rm (i)}] If $G$ is  web-compact, then $(G^\wedge)^+$ is angelic.
\item[{\rm (ii)}] If $G$ has a $\GG$-base, then $G^+$ is angelic.
\end{enumerate}
\end{proposition}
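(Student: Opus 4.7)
The unifying strategy is to exploit Fact \ref{f:Orihuela-Cp} by realising the relevant group (either $G^\wedge$ in part (i) or $G^+$ in part (ii)) as a topological subspace of $C_p(Y,\Ss)$ for a suitable web-compact space $Y$. For (i) the role of $Y$ is played by $G$ itself; for (ii) I will take $Y = G^\wedge$ with its compact-open topology, and the bulk of the work is proving that this $Y$ is web-compact.

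For part (i), evaluating each character at points of $G$ identifies $G^\wedge$ with a subset of $\Ss^G$, and under the induced pointwise topology $\pi := \sigma(G^\wedge, G)$ it is a topological subspace of $C_p(G,\Ss)$. Since $G$ is web-compact, Fact \ref{f:Orihuela-Cp} gives that $C_p(G,\Ss)$ is angelic, and therefore so is $(G^\wedge, \pi)$. The Bohr topology $(G^\wedge)^+ = \sigma(G^\wedge, G^{\wedge\wedge})$ is a Hausdorff (hence regular) group topology that is finer than $\pi$, because $\alpha_G(G) \subseteq G^{\wedge\wedge}$. The Pryce-type result quoted just before Fact \ref{f:Orihuela-Cp} (angelicity transfers to finer regular topologies on the same set) then delivers that $(G^\wedge)^+$ is angelic.

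For part (ii), let $\{U_\alpha : \alpha \in \NN^\NN\}$ be a $\GG$-base at $0 \in G$. I claim $G^\wedge$ is web-compact with $\Sigma := \NN^\NN$ and $A_\alpha := U_\alpha^\triangleright$. By Fact \ref{f:Mackey-Ban} each $A_\alpha$ is compact in $G^\wedge$. Every $\chi \in \widehat{G}$ satisfies $\chi(U)\subseteq \Ss_+$ for some neighbourhood $U$ of $0$, and the $\GG$-base supplies $\alpha$ with $U_\alpha \subseteq U$, hence $\chi \in U_\alpha^\triangleright$, so $\bigcup_\alpha A_\alpha = \widehat{G}$. For the web condition, given $\alpha = (n_k) \in \Sigma$ and $x_k \in C_{n_1\dots n_k}$, pick $\beta_k \in \Sigma$ with $\beta_k(j) = n_j$ for $j \leq k$ and $x_k \in U_{\beta_k}^\triangleright$, and set
\[
\beta(j) := \max\bigl(n_j,\ \max_{k \leq j} \beta_k(j)\bigr).
\]
A direct check gives $\beta \geq \beta_k$ coordinatewise for every $k$, so by monotonicity of the $\GG$-base $U_\beta \subseteq U_{\beta_k}$, and hence $x_k \in U_{\beta_k}^\triangleright \subseteq U_\beta^\triangleright$ for all $k$; compactness of $U_\beta^\triangleright$ in $G^\wedge$ yields the required cluster point. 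With $G^\wedge$ now web-compact, Fact \ref{f:Orihuela-Cp} makes $C_p(G^\wedge, \Ss)$ angelic, and the canonical homomorphism $\alpha_G$ embeds $G$ into $C_p(G^\wedge, \Ss)$ (injective because $G$ is $MAP$) with the subspace topology on $\alpha_G(G)$ equal to pointwise convergence on $\widehat{G}$, i.e.\ to $\sigma(G,\widehat{G}) = \tau^+$. Hence $G^+$ is homeomorphic to a subspace of the angelic space $C_p(G^\wedge, \Ss)$ and is itself angelic.

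The main obstacle is the web-compactness step in (ii): from a branch $\alpha$ and an arbitrary choice sequence $x_k \in C_{n_1\dots n_k}$ one must manufacture a single $\beta \in \NN^\NN$ that simultaneously dominates all the auxiliary $\beta_k$, so that one compact polar $U_\beta^\triangleright$ captures the entire sequence. The diagonal formula above does the job, and once this is in hand both parts reduce to direct assembly of Facts \ref{f:Orihuela-Cp} and \ref{f:Mackey-Ban} together with the Pryce-type refinement principle used in (i).
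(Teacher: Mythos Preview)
Your proof is correct and follows essentially the same route as the paper's. For (i) the arguments are identical. For (ii) the paper cites that $\{U_\alpha^{\triangleright}\}$ is a compact resolution in $G^\wedge$ (Theorem~5.1 of \cite{GKL}) and that any space with a compact resolution is web-compact (Example~4.1(1) of \cite{kak}); your diagonal construction of $\beta$ is precisely a direct verification of this second fact, so you have unpacked the cited references rather than taken a different path. The only cosmetic difference is that the paper applies Fact~\ref{f:Orihuela-Cp} to $H=\big(\widehat{G},\sigma(\widehat{G},G)\big)$ while you apply it to $G^\wedge$ with the compact-open topology; since the same family $\{U_\alpha^{\triangleright}\}$ is a compact resolution in both and $G^+$ embeds in $C_p$ of either, this makes no material difference.
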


\begin{proof}
(i) The group $\big(\widehat{G}, \sigma(\widehat{G}, G)\big)$, being a closed subgroup of the group $C_p(G,\Ss)$, is angelic by Fact \ref{f:Orihuela-Cp}. As $\sigma(\widehat{G}, G)\leq \sigma(\widehat{G}, G^{\wedge\wedge})$, we obtain that the group $(G^\wedge)^+ =\big(\widehat{G}, \sigma(\widehat{G}, G^{\wedge\wedge})\big)$ is also angelic.

(ii) Let $\{U_{\alpha}: \alpha\in\mathbb{N}^{\mathbb{N}}\}$ be a $\GG$-base at zero in $G$. Then the family $\{U_{\alpha}^{\triangleright}: \alpha\in\mathbb{N}^{\mathbb{N}}\}$ is a compact resolution in $G^\wedge$ by Theorem 5.1 of \cite{GKL}. Therefore the group $H:= \big( \widehat{G}, \sigma(\widehat{G},G)\big)$ is web-compact by Example 4.1(1) of \cite{kak}. Hence the space $C_p(H,\Ss)$ is angelic by Fact \ref{f:Orihuela-Cp}. So the group $G^+=\big( G, \sigma(G, \widehat{G})\big)$, being a subgroup of $C_p(H,\Ss)$, is also angelic.
\end{proof}

We shall use the following results.
\begin{proposition}[\cite{Gab-Top-Nul}] \label{pSubSchur}
Let $H$ be a subgroup of a MAP Abelian group $X$ and $\mathcal{P}\in \mathfrak{P}_0$.
If $X$ respects $\mathcal{P}$, then $H$ respects $\mathcal{P}$ as well.
\end{proposition}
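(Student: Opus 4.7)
The plan is to compare three topologies on the underlying set of $H$: the original topology $\tau_H$ inherited from $X$; the Bohr topology $\sigma(H,\widehat H)$ of $H^+$; and the subspace topology $\sigma(H,\widehat X|_H)$ induced on $H$ by $X^+=(X,\sigma(X,\widehat X))$. Because $X$ is $MAP$ abelian and every $\chi\in\widehat X$ restricts to a continuous character of $H$, the subgroup $H$ is itself $MAP$, and one has the comparison $\sigma(H,\widehat X|_H)\le\sigma(H,\widehat H)\le\tau_H$; equivalently, the identity maps $(H,\tau_H)\to H^+\to(H,\sigma(H,\widehat X|_H))$ are continuous, so through the second of these $H^+$ maps continuously into $X^+$.

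First I would observe that every property $\mathcal{P}\in\Pp_0$ is stable when the ambient topology is replaced by a coarser comparable one: compactness, sequential compactness, countable compactness and pseudocompactness depend only on the subspace topology of $A$, which is merely coarsened, while a sequence converging in a finer topology (with its limit in $A$) remains convergent to the same limit in any coarser one. This immediately gives the inclusion $\mathcal{P}(H)\subseteq\mathcal{P}(H^+)$. For the reverse inclusion, take $A\in\mathcal{P}(H^+)$. Coarsening from $\sigma(H,\widehat H)$ to $\sigma(H,\widehat X|_H)$ yields $A\in\mathcal{P}(X^+)$ by the same stability principle; in the $\mathcal{S}$-case the limit of the sequence already lies inside $A\subseteq H$ at the $H^+$ stage. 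The hypothesis that $X$ respects $\mathcal{P}$ then gives $A\in\mathcal{P}(X)$, and since the subspace topology on $A$ from $X$ coincides with that from $H$ and every $\mathcal{P}\in\Pp_0$ is intrinsic to $A$ with its subspace topology (with the $\mathcal{S}$-limit automatically in $A\subseteq H$), one concludes $A\in\mathcal{P}(H)$.

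The argument is largely bookkeeping, and the only step that needs care is the convergent-sequence case, where the limit produced by the respecting property of $X$ must actually lie in $H$; this is automatic because the limit is already pinned down inside $H$ at the $H^+$ stage, and Hausdorffness of $X^+$ preserves it through coarsening. It is also instructive to note why the proposition is restricted to $\Pp_0$ rather than $\Pp$: functional boundedness is ambient-dependent, and the closing step breaks down, because a continuous real-valued function on $H$ need not extend to a continuous function on $X$, so $A\in\mathcal{FB}(X)$ does not in general imply $A\in\mathcal{FB}(H)$.
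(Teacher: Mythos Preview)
The paper does not supply its own proof of this proposition; it is quoted from \cite{Gab-Top-Nul} without argument. So there is no in-paper proof to compare against.

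Your argument is correct and is the natural one. The essential observation you isolate---that the subspace topology $(H,\sigma(H,\widehat X|_H))$ inherited from $X^+$ is coarser than the Bohr topology $\sigma(H,\widehat H)$ of $H$ itself, because restrictions $\widehat X|_H$ form only a subset of $\widehat H$---is exactly the point, and from there the chain $\mathcal{P}(H^+)\to\mathcal{P}(X^+)\to\mathcal{P}(X)\to\mathcal{P}(H)$ goes through because each $\mathcal{P}\in\Pp_0$ is intrinsic to the subspace and is preserved under coarsening. Your handling of the $\mathcal{S}$-case (pinning the limit inside $A\subseteq H$ before coarsening, and invoking Hausdorffness of $X^+$ to keep it there) is the only place requiring care, and you treat it correctly. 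Your closing remark on why $\mathcal{FB}$ is excluded is also apt and matches the paper's later need for an additional $C$-embedding hypothesis (Proposition~4.9 of \cite{Gab-Top-Nul}, invoked in the proof of Theorem~\ref{t:k-omega-Schur}) when functional boundedness is at stake.
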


Let $E$ be a real lcs and $E'$ its topological dual space. Then $E$ is a locally quasi-convex abelian group, see \cite{Ban}.  So it is natural to consider relations between the weak topology $\tau_w :=\sigma(E,E')$ and the Bohr topology $\tau^+:=\sigma(E,\widehat{E})$ on $E$. Denote by $\tau_k$ the compact-open topology on $E'$. The polar of a subset $A$ of $E$ is denoted by $A^\circ :=\{ \chi\in E': |\chi(x)|\leq 1 \, \forall x\in A\}$.
Define
\[
\psi: E' \to \widehat{E},  \; \psi(\chi):= e^{2\pi i \chi},  \quad \big( \mbox{i.e. } \psi(\chi)(x):= e^{2\pi i \chi(x)} \mbox{ for } x\in E\big).
\]

A proof of the next important result can be found in \cite[Proposition~2.3]{Ban}.
\begin{fact} \label{f:dual-E}
Let $E$ be a real lcs and let $\psi: E' \to \widehat{E},  \psi(\chi):= e^{2\pi i \chi}$. Then:
\begin{enumerate}
\item[{\rm (i)}] $\psi$ is an algebraic isomorphism;
\item[{\rm (ii)}] $\psi$ is a topological isomorphism of $(E', \tau_k)$ onto $E^\wedge$.
\end{enumerate}
\end{fact}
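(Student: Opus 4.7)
For part (i), note that the identity $e^{2\pi i(\chi_1+\chi_2)(x)} = e^{2\pi i \chi_1(x)} e^{2\pi i \chi_2(x)}$ makes $\psi$ a group homomorphism from $(E',+)$ to $\widehat{E}$. Injectivity is immediate: if $\psi(\chi) \equiv 1$ then $\chi(E) \subseteq \mathbb{Z}$, and since $\chi$ is $\mathbb{R}$-linear the relation $t\chi(x) = \chi(tx) \in \mathbb{Z}$ for all $t \in \mathbb{R}$ forces $\chi(x) = 0$. For surjectivity I would fix $\eta \in \widehat{E}$ and choose a balanced open neighborhood $V$ of $0 \in E$ such that $\eta(V+V) \subseteq \{z \in \Ss : \mathrm{Re}(z) > 0\}$; on $V$ I set $\chi_0(x) := \tfrac{1}{2\pi i}\mathrm{Log}\,\eta(x)$ using the principal branch of the logarithm on $\Ss \setminus \{-1\}$, so $\chi_0(V) \subseteq (-1/4, 1/4)$. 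Both $\chi_0(x+y)$ and $\chi_0(x)+\chi_0(y)$ are real lifts of $\eta(x+y) = \eta(x)\eta(y)$ lying in $(-1/2,1/2)$, hence their difference is an integer in $(-1,1)$, i.e.\ zero; so $\chi_0$ is additive on $V$. Since $V$ is absorbing, the rule $\chi(x) := n\chi_0(x/n)$ for any $n$ with $x/n \in V$ gives a well-defined additive extension $\chi : E \to \mathbb{R}$ (independence of $n$ follows from the additivity of $\chi_0$ on $V$). Continuity of $\chi$ at $0$ is inherited from $\chi_0$, and any continuous additive map on a real topological vector space is automatically $\mathbb{R}$-linear; by construction $\psi(\chi) = \eta$.

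For part (ii), continuity of $\psi : (E',\tau_k) \to E^\wedge$ follows at $0$ from the fact that $|e^{2\pi i t} - 1| \to 0$ uniformly as $|t| \to 0$: controlling $\sup_{x \in K}|\chi(x)|$ controls $\sup_{x \in K}|\psi(\chi)(x) - 1|$. For continuity of $\psi^{-1}$, fix a compact $K \subseteq E$ and $\eps \in (0,1)$, and form the compact set $\tilde K := \{tx : t \in [-1/\eps,\, 1/\eps],\ x \in K\}$, which is compact as the continuous image of the compact product $[-1/\eps, 1/\eps] \times K$ under scalar multiplication. Then $N := \{\eta \in E^\wedge : -1 \notin \eta(\tilde K)\}$ is a neighborhood of $\mathbf{1}$ in $E^\wedge$. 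For $\eta \in N$ with $\chi := \psi^{-1}(\eta)$ and each $x \in K$, the continuous map $t \mapsto t\chi(x)$ on $[-1/\eps, 1/\eps]$ takes the value $0$ at $t=0$ and, by choice of $N$, avoids the set $\tfrac12 + \mathbb{Z}$. Were $|\chi(x)| \geq \eps$, the image $\{t\chi(x) : |t| \leq 1/\eps\}$ would be an interval of length at least $2$ and would necessarily cross an element of $\tfrac12 + \mathbb{Z}$ — contradiction. Hence $\sup_{x \in K}|\chi(x)| < \eps$, proving continuity of $\psi^{-1}$.

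The only genuinely nontrivial step is the lifting argument in the surjectivity portion of (i); it is the single place in the proof where one uses that $\mathbb{R} \to \Ss$ is a covering together with the real-vector-space structure of $E$ (divisibility plus continuous scalar action), and no completeness hypothesis on $E$ is required. Everything else reduces to routine manipulations with the exponential, together with the one-parameter observation exploited in (ii) that if $\chi(x)\neq 0$ then sufficiently large scalar multiples $t\chi(x)$ must eventually separate from $\mathbb{Z}$ modulo $\tfrac12$.
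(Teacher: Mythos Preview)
Your argument is correct in both parts. The lifting construction in (i) is carried out carefully: the choice of a balanced $V$ with $\eta(V+V)\subseteq\{z:\mathrm{Re}\,z>0\}$ guarantees that $\chi_0(x+y)$ is defined for $x,y\in V$, and the well-definedness of the extension $\chi(x)=n\chi_0(x/n)$ follows by reducing to a common denominator $nm$ and using that $jx/(nm)\in V$ for $1\le j\le m$ by balancedness. The argument for continuity of $\psi^{-1}$ in (ii) via the dilated compact set $\tilde K$ and the ``interval of length $\ge 2$'' obstruction is clean and correct.

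As for comparison: the paper does not actually prove this fact; it simply records it as a known result and refers the reader to \cite[Proposition~2.3]{Ban}. Your direct argument is essentially the standard one (and presumably close to what appears in Banaszczyk's monograph): the surjectivity uses the local homeomorphism $\IR\to\Ss$ together with the absorbing/balanced structure of neighborhoods in a real TVS, and the openness of $\psi$ uses the scalar action to amplify small characters until they hit $-1$. So there is no genuine methodological difference to discuss --- you have supplied a self-contained proof where the paper opts for a citation.
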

We shall say that $\psi$ is the {\em canonical isomorphism } of $E'$ onto $\widehat{E}$. Fact \ref{f:dual-E} implies that  $\tau^+ < \tau_w \leq  \tau$ and hence
\begin{equation} \label{equ:weak-Bohr-property}
\mathcal{P}(E) \subseteq \mathcal{P}(E_w) \subseteq \mathcal{P}(E^+), \quad \mbox{ for every } \mathcal{P}\in \Pp.
\end{equation}

In \cite{ReT} it is proved that $E_w$ and $E^+$ have the same compact sets and hence the same convergent sequences. The next proposition generalizes this result.
\begin{proposition}[\cite{Gab-Top-Nul}] \label{p-Weak=+}
Let $E$ be a real lcs and let $\mathcal{P}\in \mathfrak{P}_0$. Then $\mathcal{P}(E_w)=\mathcal{P}(E^+)$.
\end{proposition}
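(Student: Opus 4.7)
The plan is to establish the nontrivial direction $\mathcal{P}(E^+)\subseteq\mathcal{P}(E_w)$; the reverse inclusion is recorded in (\ref{equ:weak-Bohr-property}). I will appeal directly to \cite{ReT} for the cases $\mathcal{P}\in\{\mathcal{C},\mathcal{S}\}$. The case $\mathcal{P}=\mathcal{SC}$ is then immediate from the coincidence of convergent sequences: if $A\in\mathcal{SC}(E^+)$ and $(x_n)\subseteq A$, a subsequence $(x_{n_k})$ is $\tau^+$-convergent to some $x\in A$, and by \cite{ReT} (both topologies being Hausdorff on $E$) the same subsequence is also $\tau_w$-convergent to $x$, so $A\in\mathcal{SC}(E_w)$.

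For $\mathcal{P}\in\{\mathcal{CC},\mathcal{PC}\}$ the strategy is to reduce to the compact case. The key observation is that on any $\tau^+$-compact (equivalently, by \cite{ReT}, $\tau_w$-compact) subset $K\subseteq E$, the two topologies $\tau^+|_K$ and $\tau_w|_K$ coincide: indeed the identity $(K,\tau_w|_K)\to(K,\tau^+|_K)$ is a continuous bijection between compact Hausdorff spaces, hence a homeomorphism. It therefore suffices to locate $A$ inside some $\tau^+$-compact subset of $E$, for then the subspace topologies agree on $A$ and the property $\mathcal{P}$ transfers verbatim from $(A,\tau^+|_A)$ to $(A,\tau_w|_A)$. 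Since countable compactness and pseudocompactness both force $A$ to be $\tau^+$-functionally bounded, Lemma \ref{l:func-bounded-precompact}(i) makes $A$ $\tau^+$-precompact, and hence its closure $\overline{A}^{bE}$ in the Bohr compactification $bE$ is compact in $bE$; the crux is to verify $\overline{A}^{bE}\subseteq E$, so that this closure is the desired compact superset.

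I expect the main obstacle to lie precisely in this last containment. The natural route is as follows: for $z\in\overline{A}^{bE}$, build a sequence $(a_n)\subseteq A$ clustering at $z$ in $bE$; the hypothesis on $A$ then produces a $\tau^+$-cluster point $a\in A\subseteq E$ of $(a_n)$; and the uniqueness of cluster points of a sequence in a Hausdorff space forces $a=z$, so $z\in E$. Producing the sequence is delicate because $bE$ is not first countable in general, and arbitrary points of $\overline{A}^{bE}$ need not be sequential limits from $A$. The standard remedy is to localize to a separable closed linear subspace $F\subseteq E$ absorbing a countable subset of $A$ witnessing $z$, and use the Hahn--Banach identifications $\widehat F=\widehat E|_F$ and $F'=E'|_F$ so that $\tau^+|_F$ and $\tau_w|_F$ are the intrinsic Bohr and weak topologies of $F$. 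On such a separable $F$, the weak topology $F_w$ is angelic (by Orihuela's theorem, since the dual unit ball is $w^\ast$-web-compact), so Pryce's lemma allows the sequential extraction and lets one push the cluster-point argument through in $F$, completing the proof.
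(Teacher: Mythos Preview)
The paper does not prove this proposition; it is quoted from \cite{Gab-Top-Nul}. Your treatment of $\mathcal{C}$, $\mathcal{S}$ and $\mathcal{SC}$ is fine, but the strategy you outline for $\mathcal{CC}$ and $\mathcal{PC}$ does not work.

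First, the statement ``uniqueness of cluster points of a sequence in a Hausdorff space forces $a=z$'' is simply false: cluster points are not unique, only limits are. Even if you could produce a sequence $(a_n)\subseteq A$ with $z$ as a cluster point in $bE$, finding another cluster point $a\in A$ tells you nothing about $z$. You would need $(a_n)$ to \emph{converge} to $z$, but then you run into the very obstacle you acknowledge: $bE$ is not first countable, and your ``remedy'' of passing to a separable closed subspace $F$ presupposes a countable subset of $A$ witnessing $z$, which is exactly what need not exist. The appeal to angelicity of $F_w$ via a ``dual unit ball'' is also unjustified for a general lcs. More fundamentally, the containment $\overline{A}^{\,bE}\subseteq E$ would force $A$ to be relatively $\tau^+$-compact, hence relatively $\tau_w$-compact by \cite{ReT}; but countable compactness (let alone pseudocompactness) does not imply relative compactness in $E_w$, so the containment you are trying to establish is in general false.

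The standard route avoids $bE$ entirely. One shows directly that $\tau_w|_A=\tau^+|_A$ on any weakly bounded set $A$: given $\chi\in E'$ with $\chi(A)\subseteq[-M,M]$, rescale to $\chi/N$ with $N>4M$ so that $(\chi/N)(A)\subseteq(-1/4,1/4)$, on which $t\mapsto e^{2\pi it}$ is a homeomorphism; hence $\chi|_A$ is $\tau^+|_A$-continuous. It then remains to check that every $A\in\mathcal{P}(E^+)$ with $\mathcal{P}\in\{\mathcal{CC},\mathcal{PC}\}$ is weakly bounded: for any finite $\chi_1,\dots,\chi_k\in E'$ the map $q=(\chi_1,\dots,\chi_k):E^+\to(\mathbb{R}^k)^+$ is continuous, so $q(A)\in\mathcal{P}\big((\mathbb{R}^k)^+\big)$, and since $\mathbb{R}^k$ is locally compact it respects $\mathcal{P}$ by \cite{Tri91}, whence $q(A)$ is bounded in $\mathbb{R}^k$. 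With $\tau_w|_A=\tau^+|_A$ the property $\mathcal{P}$ transfers immediately.
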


For an lcs $E$, we denote by $\Bo(E)$ the family of all bounded subsets of $E$. The next assertion complements Proposition \ref{p-Weak=+}. %

\begin{proposition} \label{p:FB-weakly-bound}
If $(E,\tau)$ is a real lcs, then every functionally bounded subset $A$ of $E^+$ is bounded, i.e., $\mathcal{FB}(E^+)\subseteq \Bo(E)$.
\end{proposition}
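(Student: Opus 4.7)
The plan is to reduce the statement to the classical fact that the $LCA$ group $\RR$ respects functional boundedness (Glicksberg–Trigos-Arrieta, recalled in the introduction), combined with the Mackey theorem that weak boundedness equals boundedness in an lcs.

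The key preliminary observation is that for every $\chi \in E'$, the linear functional $\chi$ is continuous as a map $\chi\colon E^+ \to \RR^+$. Since the Bohr topology on $\RR$ is initial for the characters $\eta_t\colon r\mapsto e^{2\pi i t r}$, $t\in\RR$, it suffices to verify that each composition $\eta_t\circ\chi$ is continuous on $E^+$. But
\[
\eta_t\circ\chi(x) = e^{2\pi i t\chi(x)} = \psi(t\chi)(x),
\]
and since $t\chi\in E'$, Fact \ref{f:dual-E} yields $\psi(t\chi)\in\widehat{E}$; hence $\eta_t\circ\chi$ is continuous on $E^+=(E,\sigma(E,\widehat{E}))$. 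One must resist the temptation to regard $\chi$ as $E^+$-continuous into the Euclidean $\RR$ (generically it is not); routing through the coarser target $\RR^+$ is the whole point.

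Now assume $A\in\mathcal{FB}(E^+)$ and fix any $\chi\in E'$. Functional boundedness is transported by continuous maps, so $\chi(A)\in\mathcal{FB}(\RR^+)$. Because $\RR$ is $LCA$, Trigos-Arrieta's theorem gives $\mathcal{FB}(\RR^+)=\mathcal{FB}(\RR)$, hence $\chi(A)\in\mathcal{FB}(\RR)$. In $\RR$ functional boundedness coincides with boundedness (witnessed by the identity function), so $\chi(A)$ is bounded in $\RR$. This holds for every $\chi\in E'$, i.e.\ $A$ is weakly bounded, and the Mackey theorem then gives $A\in\Bo(E)$, as desired.

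The main conceptual hurdle is the continuity statement $\chi\colon E^+\to\RR^+$; once that is in place, the proof is a short reduction to known respecting results for $\RR$ and a classical lcs fact. No quantitative estimates or discrete-set constructions are needed.
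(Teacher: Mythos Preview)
Your proof is correct and follows the same overall strategy as the paper: reduce to weak boundedness via Mackey's theorem, and for each $\chi\in E'$ push $A$ forward to $\IR^+$ and invoke Trigos-Arrieta's result that $\IR$ respects functional boundedness. The only difference is in how the push-forward is realized. The paper splits $E=L_\chi\oplus\ker(\chi)$ with $L_\chi=\spn(z_\chi)$, asserts that this yields $E^+=L_\chi^+\oplus\ker(\chi)^+$, and uses the continuity of the projection $P_\chi$ onto $L_\chi^+\cong\IR^+$. You instead verify directly that $\chi\colon E^+\to\IR^+$ is continuous by observing that every character of $\IR$ composed with $\chi$ is of the form $\psi(t\chi)\in\widehat{E}$. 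Your route is a touch more direct: it bypasses the need to justify that the Bohr functor respects the topological direct sum, and it makes explicit exactly which fact about $E^+$ is being used. Conversely, the paper's decomposition makes the identification $P_\chi(A)\subseteq L_\chi^+\cong\IR^+$ more concrete and perhaps more transparent for readers thinking in terms of coordinates. Both arguments are short and rest on the same two external ingredients.
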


\begin{proof}
Since $\Bo(E)=\Bo(E_w)$, 
it is sufficient to show that $A$ is weakly bounded. Let $U=[F;\e]$ be a standard weakly open neighborhood of zero in $E$, where $F$ is a finite subset of $E'\setminus\{ 0\}$, $\e>0$ and
\[
[F;\e]:= \{ x\in E: |\chi(x)|<\e \;\; \forall \chi\in F\}.
\]
Fix a $\chi\in F$ and take a $z=z_\chi \in E$ such that $\chi(z)=1$. By Theorem 7.3.5 of \cite{NaB}, we can represent $E$ in the form $E=L_\chi \oplus \ker(\chi)$, where $L_\chi=\spn(z)$ and $\ker(\chi)$ is the kernel of $\chi$. Then $E^+=L_\chi^+ \oplus \ker(\chi)^+$. Since the projection $P_\chi$ from $E$ onto $L_\chi$ is continuous (in $\tau$ and $\tau^+$), $P_\chi(A)$ is a functionally bounded subset of $L_\chi^+ \cong \IR^+$. By \cite{Tri91}, $P_\chi(A)$ is bounded in $L_\chi$. Therefore there exists a $C_\chi>0$ such that $|\chi(a)|<C_\chi$ for every $a\in A$. Set $C:=\max\{ C_\chi: \chi\in F\}$. Then $(\e/C)A\subseteq  U$ since $|\chi\big((\e/C)a\big))|=(\e/C)|\chi(a)|<\e$ for every $a\in A$ and $\chi\in F$. Thus $A$ is bounded.
\end{proof}
We do not know whether there exists a real lcs $E$ such that $\mathcal{FB}(E_w) \subsetneq \mathcal{FB}(E^+)$.

The weak-$\ast$ topology on the dual space of an lcs $E$ plays a crucial role in the theory of locally convex spaces. The next assertion complements Fact \ref{f:dual-E} and Proposition \ref{p-Weak=+} and is used repeatedly in the paper.

\begin{proposition} \label{p:weak*-space-group}
Let $E$ be a real lcs, $\mathcal{P}\in\Pp_0$  and let $\psi:E'\to \widehat{E}$ be the canonical isomorphism. Then:
\begin{enumerate}
\item[{\rm (i)}] the map $\psi : \big(E',\sigma(E',E)\big)^+ \to \big(\widehat{E},\sigma(\widehat{E},E)\big)$
is a topological isomorphism;
\item[{\rm (ii)}] $A\in \mathcal{P}\big(E',\sigma(E',E)\big)$ if and only if $\psi(A)\in\mathcal{P}\big(\widehat{E},\sigma(\widehat{E},E)\big)$;
\item[{\rm (iii)}] the map $\psi : \big(E',\sigma\big(E',(E',\tau_k)'\big)\big)^+ \to \big(\widehat{E},\sigma(\widehat{E},E^{\wedge\wedge})\big)$
is a topological isomorphism;
\item[{\rm (iv)}] $\mathcal{P}\bigg(E',\sigma\big(E',(E',\tau_k)'\big)\bigg)=\mathcal{P}\left(\big(E',\sigma\big(E',(E',\tau_k)'\big)\big)^+\right)$;
\item[{\rm (v)}] $A\in \mathcal{P}\big(E',\sigma\big(E',(E',\tau_k)'\big)\big)$  if and only if $\psi(A)\in\mathcal{P}\big(\widehat{E},\sigma(\widehat{E},E^{\wedge\wedge})\big)$;
\item[{\rm (vi)}] a subset $A$ of $E'$ is equicontinuous if and only if $\psi(A)$ is equicontinuous.
\end{enumerate}
\end{proposition}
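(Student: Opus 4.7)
The plan is to reduce parts (i)--(v) to direct Bohr-topology computations on the two weak-$\ast$ type locally convex spaces $F_1:=(E',\sigma(E',E))$ and $F_2:=(E',\sigma(E',(E',\tau_k)'))$, combined with Proposition \ref{p-Weak=+}, and to handle (vi) separately via a polar/quasi-polar comparison.

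For (i), $F_1$ is a real lcs with continuous dual $E$. Applying Fact \ref{f:dual-E}(i) to $F_1$, every continuous character of $F_1$ has the form $\chi\mapsto e^{2\pi i\chi(x)}$ for some $x\in E$, so the Bohr topology $F_1^+$ is the coarsest topology on $E'$ making all such maps continuous. Since $\psi(\chi)\in\widehat{E}$ evaluates at $x\in E$ as $e^{2\pi i\chi(x)}$, convergence $\chi_\alpha\to\chi$ in $F_1^+$ is precisely convergence $\psi(\chi_\alpha)\to\psi(\chi)$ in $(\widehat{E},\sigma(\widehat{E},E))$; together with the algebraic bijectivity from Fact \ref{f:dual-E}(i) this gives (i). The argument for (iii) is the same pattern applied to $F_2$, whose continuous dual $(E',\tau_k)'$ is by Fact \ref{f:dual-E}(i)--(ii) canonically isomorphic to $E^{\wedge\wedge}$ through the assignment $\phi\mapsto\bigl(\eta\mapsto e^{2\pi i\phi(\psi^{-1}(\eta))}\bigr)$; a direct check of the evaluation pairings then shows that under $\psi$ the Bohr convergence on $F_2^+$ matches $\sigma(\widehat{E},E^{\wedge\wedge})$-convergence.

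Parts (ii) and (iv) are almost immediate. Each $F_i$ already carries its own weak topology, so Proposition \ref{p-Weak=+} applied to $F_i$ yields $\mathcal{P}(F_i)=\mathcal{P}(F_i^+)$. Transferring $F_1^+$ to $(\widehat{E},\sigma(\widehat{E},E))$ via the topological isomorphism from (i) gives (ii); (iv) is the bare statement $\mathcal{P}(F_2)=\mathcal{P}(F_2^+)$; and combining (iv) with (iii) gives (v).

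The main obstacle is (vi), which forces me to compare the polar $U^\circ=\{\chi\in E':|\chi(x)|\leq 1\;\forall x\in U\}$ with the quasi-polar $V^\triangleright$. A subset $A\subseteq E'$ is equicontinuous (in the lcs sense) iff $A\subseteq U^\circ$ for some neighborhood $U$ of $0$ in $E$, whereas by Fact \ref{f:Mackey-Ban} equicontinuity of $\psi(A)\subseteq\widehat{E}$ is equivalent to $\psi(A)\subseteq V^\triangleright$ for some neighborhood $V$ of $0$ in $E$. For the forward implication, if $A\subseteq U^\circ$ then for every $\chi\in A$ and $x\in\tfrac14 U$ one has $|\chi(x)|\leq\tfrac14$, hence $\mathrm{Re}(e^{2\pi i\chi(x)})\geq 0$, giving $\psi(A)\subseteq(\tfrac14 U)^\triangleright$. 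For the converse, shrink $V$ to a balanced neighborhood with $\psi(A)\subseteq V^\triangleright$; then $tx\in V$ for every $x\in V$ and $t\in[-1,1]$, so $\cos(2\pi t\chi(x))\geq 0$ throughout this interval. Writing $r=\chi(x)$, the continuous path $t\mapsto 2\pi tr$ starts at $0$ at $t=0$, and if $|r|>\tfrac14$ then for some $t\in(0,1]$ the value $2\pi tr$ would exceed $\pi/2$ strictly and thereby enter a region where cosine is negative, contradicting the inequality. Hence $|\chi(x)|\leq\tfrac14$ for all $x\in V$ and $\chi\in A$, so $A\subseteq(4V)^\circ$; since $4V$ is a neighborhood of $0$ in the lcs $E$, this is the required equicontinuity.
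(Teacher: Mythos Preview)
Your proof is correct and follows essentially the same line as the paper's: parts (i)--(v) are reduced to Proposition~\ref{p-Weak=+} applied to the weak-$\ast$ lcs $F_1=(E',\sigma(E',E))$ and $F_2=(E',\sigma(E',(E',\tau_k)'))$ together with the identification of their character groups via Fact~\ref{f:dual-E}, exactly as in the paper (which writes out explicit basic neighborhoods where you argue with nets). For (vi) the paper uses the estimate $\pi|\phi|\le|e^{2\pi i\phi}-1|\le 2\pi|\phi|$ and arc-connectedness of an absolutely convex neighborhood, whereas you use the equivalent device $\cos(2\pi t\chi(x))\ge 0$ along the segment $tx$, $t\in[0,1]$, in a balanced neighborhood; both are the same connectedness trick to rule out the $+\mathbb{Z}$ ambiguity.
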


\begin{proof}
(i) Since $\big(E',\sigma(E',E)\big)'=E$, Fact \ref{f:dual-E} implies that the dual group of $\big(E',\sigma(E',E)\big)$ can be identified with $E$ under the map $x\mapsto e^{2\pi i x}\, (x\in E)$. So the sets
\[
[F;\e]:=\big\{ \chi\in E': |e^{2\pi i\chi(x)}-1|<\e \; \forall x\in F\big\},
\]
where $F$ is a finite subset of $E$ and $\e>0$, form a base at zero in $\big(E',\sigma(E',E)\big)^+$. The sets
\[
V_{F,\e} :=\big\{ z\in \widehat{E}: |z(x)-1|<\e \;  \forall x\in F\big\},
\]
where $F$ is a finite subset of $E$ and $\e>0$, form a base at zero in $\big(\widehat{E},\sigma(\widehat{E},E)\big)$. Taking into account that $\psi$ is an algebraic isomorphism and $\psi(\chi)(x)=e^{2\pi i\chi(x)}$, we obtain that $\psi\big([F;\e]\big) =V_{F,\e}$. Thus the canonical isomorphism $\psi$ is also a topological isomorphism.

(ii) Set $F:=\big(E',\sigma(E',E)\big)$. Then $F'=E$ and $F_w=F$. Therefore, by Proposition \ref{p-Weak=+}, $\mathcal{P}(F)=\mathcal{P}(F^+)$ and (i) applies.

(iii) Note that $\big(E',\sigma\big(E',(E',\tau_k)'\big)\big)'= (E',\tau_k)'$. Therefore, by Fact  \ref{f:dual-E}, the sets
\[
[F;\e]:=\big\{ \chi\in E': |e^{2\pi i\xi(\chi)}-1|<\e \;  \forall \xi\in F\big\},
\]
where $F$ is a finite subset of $(E',\tau_k)'$ and $\e>0$, form a base at zero in $\big(E',\sigma\big(E',(E',\tau_k)'\big)\big)^+ $. Analogously, the sets
\[
W_{F,\e} :=\big\{ z\in \widehat{E}: |s(z)-1|<\e \;  \forall s\in F\big\},
\]
where $F$ is a finite subset of $E^{\wedge\wedge}$ and $\e>0$, form a base at zero in $\big(\widehat{E},\sigma(\widehat{E},E^{\wedge\wedge})\big)$. By (i) of Fact  \ref{f:dual-E}, the map
\[
\widehat{\psi}: (E',\tau_k)' \to (E',\tau_k)^{\wedge}, \quad \widehat{\psi}(\xi):= e^{2\pi i\xi},
\]
is an algebraic isomorphism and, by (ii) of Fact  \ref{f:dual-E}, the adjoint map $\psi^\ast$ of $\psi$
\begin{equation} \label{equ:exponent-21}
\psi^\ast : E^{\wedge\wedge} \to (E',\tau_k)^\wedge, \quad (\psi^\ast(\eta),\chi)=\big(\eta,\psi(\chi)\big), \; \eta\in E^{\wedge\wedge}, \chi\in E',
\end{equation}
is a topological isomorphism. In particular, for $\alpha=\psi^\ast(\eta)$, (\ref{equ:exponent-21}) implies
\begin{equation} \label{equ:exponent-22}
(\alpha,\chi)=\big( (\psi^\ast)^{-1} (\alpha), \psi(\chi)\big), \quad \forall \alpha\in (E',\tau_k)^{\wedge},\; \forall \chi\in E'.
\end{equation}
So the map  $H:= (\psi^\ast)^{-1}\circ \widehat{\psi}: (E',\tau_k)' \to E^{\wedge\wedge}$ is an algebraic isomorphism such that, for every $z=\psi(\chi)\in E^\wedge$ with $\chi\in E'$ and each $\xi\in (E',\tau_k)'$, we have
\[
\big( H(\xi),z\big)= \left( (\psi^\ast)^{-1}\circ \widehat{\psi} (\xi), \psi(\chi) \right) \stackrel{(\ref{equ:exponent-22})}{=} \left( \widehat{\psi}(\xi),\chi \right)= e^{2 \pi i\xi(\chi)} =e^{2\pi i \xi\big( \psi^{-1}(z)\big)}.
\]
Therefore, for a finite subset $F$ of $(E',\tau_k)'$ and $\e >0$, we obtain
\[
\begin{split}
\psi\big( [F;\e]\big) & = \left\{ z\in \widehat{E}: \left| e^{2\pi i \xi\big( \psi^{-1}(z)\big)} -1 \right| <\e \; \forall \xi\in F \right\} \\
& = \left\{ z\in \widehat{E}: \left| \big( H(\xi), z\big) -1 \right| <\e \; \forall \xi\in F \right\} = W_{H(F),\e}.
\end{split}
\]
Thus $\psi$ is a topological isomorphism.

(iv) follows from Proposition \ref{p-Weak=+} applied to the space $G=G_w := \big(E',\sigma\big(E',(E',\tau_k)'\big)\big) $, and
(v) follows from (iii) and (iv).

(vi) We shall use the following easily checked inequalities
\begin{equation} \label{equ:exponent-1}
\pi |\phi| \leq \big|e^{2\pi i \phi} -1\big| \leq 2\pi|\phi|, \quad \phi\in [-1/2,1/2].
\end{equation}

Let $A\subseteq E'$ be equicontinuous. For every $0<\e<0.1$, take a neighborhood $U$ of zero in $E$ such that
\begin{equation} \label{equ:exponent-2}
|a(x)|<\e, \quad \forall a\in A, \; \forall x\in U.
\end{equation}
Then (\ref{equ:exponent-1}) and (\ref{equ:exponent-2}) imply
\[
|\psi(a)(x) -1| =\big|e^{2\pi i a(x)} -1\big| \leq 2\pi\e, \quad \forall a\in A, \; \forall x\in U.
\]
Thus $\psi(A)$ is equicontinuous.

Conversely, let $\psi(A)$ be equicontinuous. For every $0<\e<0.1$, take an absolutely convex neighborhood $U$ of zero in $E$ such that
\[
\big|e^{2\pi i a(x)} -1\big| < \e, \quad \forall a\in A, \; \forall x\in U.
\]
If $a(x)= t+m$ with $t\in[-1/2,1/2]$ and $m\in\ZZ$, (\ref{equ:exponent-1}) implies $\pi|t| \leq |e^{2\pi it} -1|=|e^{2\pi ia(x)} -1|<\e$, and hence
\begin{equation} \label{equ:exponent-3}
a(x)\in (-\e/\pi,\e/\pi) +\ZZ,  \quad \forall a\in A, \; \forall x\in U.
\end{equation}
Since $U$ is arc-connected, $0\in U$ and $\e<0.1$, (\ref{equ:exponent-3}) implies
\[
a(x)\in (-\e/\pi,\e/\pi) ,  \quad \forall a\in A, \; \forall x\in U.
\]
Thus $A$ is equicontinuous.
\end{proof}


\section{ General results} \label{sec:2}


To show that the Glicksberg property and the Schur property can be naturally defined by two  functors in the category $\mathbf{TG}$ we consider two classes of topological groups introduced by Noble in \cite{Nob, Nob2}, namely, the classes of $k$-groups and $s$-groups. 

For every $(X,\tau)\in \mathbf{TG}$ denote by $k_g(\tau)$ the finest group topology for $X$ coinciding on compact sets with $\tau$. In particular, $\tau$ and $ k_g(\tau)$ have the same family of compact subsets. Clearly, $\tau\leq k_g(\tau)$. If $\tau = k_g(\tau)$, the group $(X,\tau)$ is called a {\it $k$-group} \cite{Nob2}. The group $(X,k_g(\tau))$ is called the {\it $k_g$-modification} of $X$. The assignment $\mathbf{k}_g(X,\tau) := (X,k_g(\tau))$ is a functor from $\mathbf{TG}$ to the full subcategory $\mathbf{K}$ of all  $k$-groups. The class $\mathbf{K}$   contains all topological groups whose underlaying space is a $k$-space. In particular, the class $\mathbf{LC}$ ($\mathbf{LCA}$) of all locally compact (and abelian, respectively) groups  is contained in $\mathbf{K}$. Since every metrizable group is a $k$-space we have $\mathbf{LC} \subsetneqq  \mathbf{K}$. The family of all abelian $k$-groups we denote by $\mathbf{KA}$.
Denote by $\mathfrak{RC}$ the class of all $MAP$ groups which respect compactness.

Similar to $k$-groups  we define $s$-groups (we follow \cite{Ga30}).
Let $(X,\tau)$ be a (Hausdorff) topological group and let $S$ be the set of all sequences in $(X,\tau)$ converging to the unit $e\in X$. Then there exists the finest Hausdorff group topology $\tau_S$ on the underlying group $X$ in which all sequences of $S$ converge to $e$.  If $\tau =\tau_S$, the group $X$ is called an {\it $s$-group}. The assignment $\mathbf{s}_g(X,\tau) := (X,\tau_S)$ is a functor from $\mathbf{TG}$ to the full subcategory $\mathbf{S}$ of all  $s$-groups. The class $\mathbf{S}$   contains all sequential groups \cite[1.14]{Ga30}.  Note that $X$ and $\mathbf{s}_g(X)$ have the same set of convergent sequences \cite[4.2]{Ga30}. The family of all abelian $s$-groups we denote by $\mathbf{SA}$.
Every $s$-group is also a $k$-group \cite{Ga3}, so $\mathbf{S}\subseteq \mathbf{K}$ and $\mathbf{SA}\subseteq \mathbf{KA}$.
Denote by  $\mathfrak{RS}$ the class of all $MAP$  groups which have the Schur property.

For a topological group $X$ with the identity $e$, set
\[
c_0 (X) := \left\{ (x_n)_{n\in\mathbb{N}} \in X^\mathbb{N} : \; \lim_{n} x_n = e \right\}
\]
and let $\mathfrak{u}_0$ be the uniform topology on $c_0(X)$ generated by the sets of the form $V^\NN$, where $V$ is an open neighborhood of $e\in X$. Set $\mathfrak{F}_0 (X) :=(c_0 (X), \mathfrak{u}_0)$.

In (1) and (3) of the next proposition we give  categorical characterizations of the Schur property and the Glikcsberg property, note also that  (8) generalizes Theorem 1.2 of \cite{Tri91}. 

\begin{proposition} \label{p:Schur-categorical}
Let $X$ and $Y$ be $MAP$ topological groups.
\begin{enumerate}
\item[{\rm (1)}] $X\in \mathfrak{RC}$ if and only if $(\mathbf{k}_g\circ\mathfrak{B})(X) =\mathbf{k}_g(X)$.
\item[{\rm (2)}] $X\in  \mathbf{K}\cap \mathfrak{RC}$ if and only if $(\mathbf{k}_g\circ\mathfrak{B})(X) =X$.
\item[{\rm (3)}] $X\in \mathfrak{RS}$ if and only if $(\mathbf{s}_g\circ\mathfrak{B})(X) =\mathbf{s}_g(X)$.
\item[{\rm (4)}] $X\in  \mathbf{S}\cap \mathfrak{RS}$ if and only if $(\mathbf{s}_g\circ\mathfrak{B})(X) =X$.
\item[{\rm (5)}] $\mathbf{PCom}\subsetneqq  \mathfrak{RC}$ and $\mathbf{LCA} \subsetneqq  \mathbf{KA}\cap \mathfrak{RC}$.
\item[{\rm (6)}] $\mathbf{K}\cap \mathfrak{RC} \subsetneqq \mathbf{K}$ and $\mathbf{K}\cap \mathfrak{RC}\subsetneqq \mathfrak{RC}$.
\item[{\rm (7)}] $\mathbf{S}\cap \mathfrak{RS} \subsetneqq \mathbf{S}$ and $\mathbf{S}\cap \mathfrak{RS}\subsetneqq \mathfrak{RS}$.
\item[{\rm (8)}] Let $X\in \mathbf{K}$ and $Y\in \mathfrak{RC}$ and let $\phi: X\to Y$ be a homomorphism. If $\phi^+: X^+ \to Y^+, \phi^+(x):=\phi(x),$ is continuous, then $\phi$ is continuous.
\item[{\rm (9)}] Let $X\in \mathbf{S}$ and $Y\in \mathfrak{RS}$ and let $\phi: X\to Y$ be a homomorphism. If $\phi^+: X^+ \to Y^+, \phi^+(x):=\phi(x),$ is continuous, then $\phi$ is continuous.
\end{enumerate}
\end{proposition}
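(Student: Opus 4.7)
The plan is to exploit two fundamental properties of the $\mathbf{k}_g$- and $\mathbf{s}_g$-modifications: each is idempotent, and two group topologies on the same underlying group have equal $\mathbf{k}_g$-modifications (resp.\ $\mathbf{s}_g$-modifications) if and only if they share the same compact subsets (resp.\ convergent sequences). Combined with the inequality $\tau^+\leq\tau$, which makes every $\tau$-compact set $\tau^+$-compact and every $\tau$-convergent sequence $\tau^+$-convergent, these yield categorical reformulations of the Glicksberg and Schur properties.

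For (1), the respecting condition $\mathcal{C}(X)=\mathcal{C}(X^+)$ is, by the above, equivalent to $\mathbf{k}_g(X^+)=\mathbf{k}_g(X)$. Part (2) follows by combining (1) with the identity $\mathbf{k}_g(X)=X$ characterizing $\mathbf{K}$. Parts (3) and (4) are proved by identical reasoning, with convergent sequences in place of compact sets and $\mathbf{s}_g$ in place of $\mathbf{k}_g$.

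For (5)--(7): every precompact Hausdorff group embeds densely in its compact (hence $MAP$) completion, whence $\tau^+=\tau$ and $\mathbf{PCom}\subseteq\mathfrak{RC}$, with $\mathbb{Z}$ witnessing properness; every $LCA$ group is an abelian $k$-group (local compactness yielding a $k$-space) that respects compactness by Glicksberg's theorem, while a non-locally-compact nuclear metrizable abelian group such as $\mathbb{Z}^{\mathbb{N}}$ shows $\mathbf{LCA}\subsetneq\mathbf{KA}\cap\mathfrak{RC}$; an infinite-dimensional Hilbert space is metrizable (hence both a $k$-group and an $s$-group) but lacks the Schur property, giving the strict inclusions $\mathbf{K}\cap\mathfrak{RC}\subsetneq\mathbf{K}$ and $\mathbf{S}\cap\mathfrak{RS}\subsetneq\mathbf{S}$; and $\mathbb{Z}^+$ is a precompact $MAP$ group whose topology is strictly coarser than both its $\mathbf{k}_g$- and $\mathbf{s}_g$-modifications, giving $\mathbf{K}\cap\mathfrak{RC}\subsetneq\mathfrak{RC}$ and $\mathbf{S}\cap\mathfrak{RS}\subsetneq\mathfrak{RS}$.

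For (8), the main technical input is that on every $\tau$-compact subset $K$ of a $MAP$ group $(X,\tau)$ the identity $K\to K^+$ is a continuous bijection from a compact space to a Hausdorff space, hence a homeomorphism, so $\tau|_K=\tau^+|_K$. Applying this twice yields that $\phi(K)$ is $\tau_Y$-compact (using continuity of $\phi^+$ together with $Y\in\mathfrak{RC}$) and that $\tau_Y$ and $\tau_Y^+$ agree on $\phi(K)$, forcing $\phi|_K\colon(K,\tau|_K)\to(Y,\tau_Y)$ to be continuous. To upgrade continuity on compacta to continuity on $X$---the main obstacle---I introduce the group topology $\mu:=\tau\vee\phi^{-1}(\tau_Y)$. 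Since $\mu\geq\tau$ and agrees with $\tau$ on each $\tau$-compact subset, the two topologies have the same compact subsets; the defining property of $X\in\mathbf{K}$ (that $\tau$ is the finest group topology with its family of compact subsets) forces $\mu\leq\tau$, hence $\mu=\tau$ and $\phi$ is $\tau$-continuous. Part (9) follows by a parallel but simpler argument: if $x_n\to x$ in $X$, then $x_n\to x$ in $X^+$, so $\phi(x_n)\to\phi(x)$ in $Y^+$ by continuity of $\phi^+$, and $Y\in\mathfrak{RS}$ combined with the Hausdorffness of $Y$ promotes this to $\phi(x_n)\to\phi(x)$ in $Y$; continuity of $\phi$ then follows because $X\in\mathbf{S}$ reduces continuity to sequential continuity.
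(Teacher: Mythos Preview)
Your argument is correct and follows the paper's logical structure for (1)--(4) and (8)--(9), with the same key observations (idempotency of $\mathbf{k}_g$ and $\mathbf{s}_g$, and that comparable group topologies with the same compacta agree on compacta). The differences are in the witnesses for (5)--(7) and in the level of detail for (8). The paper uses the groups $\mathfrak{F}_0(X)$ (for $X$ a compact totally disconnected metrizable group) and $\mathfrak{F}_0(\mathbb{T})$, together with a precompact topology on $\mathbb{Z}$ induced by a non-measurable subgroup of $\mathbb{T}$; your choices $\mathbb{Z}$, $\mathbb{Z}^{\mathbb{N}}$, an infinite-dimensional Hilbert space, and $\mathbb{Z}^+$ are more elementary and avoid the $\mathfrak{F}_0$ machinery, at the cost of leaning on the nuclear theory for $\mathbb{Z}^{\mathbb{N}}$ and on Glicksberg's theorem to see that $\mathbb{Z}^+$ has only finite compacta. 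For (8) the paper simply invokes Noble's result that a homomorphism out of a $k$-group which is continuous on compacta is continuous, whereas you reprove this via the supremum topology $\mu=\tau\vee\phi^{-1}(\tau_Y)$; your route is self-contained but note that the phrase ``the finest group topology with its family of compact subsets'' is not quite the defining property of $k_g(\tau)$---what you actually use (and what you correctly established) is that $\mu$ coincides with $\tau$ on each $\tau$-compact set, which is the right hypothesis for $\mu\le k_g(\tau)$. One small caution: your opening claim that equal compact families force equal $\mathbf{k}_g$-modifications is not obviously true for incomparable topologies; it is the inequality $\tau^+\le\tau$ that makes the compact--Hausdorff bijection argument go through, so the implication you need is the comparable case, which you do invoke.
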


\begin{proof}
(1) If $X\in \mathfrak{RC}$, then $(\mathbf{k}_g\circ\mathfrak{B})(X) =\mathbf{k}_g(X)$ by the definition of the respecting compactness  and the definition of $\mathbf{k}_g(X)$.
Conversely, let $(\mathbf{k}_g\circ\mathfrak{B})(X) =\mathbf{k}_g(X)$ and let $K$ be compact in $\mathfrak{B}(X)$. Then $K$ is compact in $(\mathbf{k}_g\circ\mathfrak{B})(X)$ by the definition of $k_g$-modification. So $K$ is compact in $\mathbf{k}_g(X)$. Hence,  by the definition of $k_g$-modification,  $K$ is compact  in $X$. Thus $X\in \mathfrak{RC}$.

(2) If $X\in  \mathbf{K}\cap \mathfrak{RC}$, then (1) and the definition of $k$-groups imply $(\mathbf{k}_g\circ\mathfrak{B})(X) =\mathbf{k}_g(X)=X$.
Conversely, let $(\mathbf{k}_g\circ\mathfrak{B})(X) =X$. Since $\mathbf{k}_g\circ\mathbf{k}_g=\mathbf{k}_g$, the equalities
\[
\mathbf{k}_g(X)=\mathbf{k}_g\circ(\mathbf{k}_g\circ \mathfrak{B}(X)) =(\mathbf{k}_g\circ \mathfrak{B})(X) =X
\]
and  (1) imply that $X$ is a $k$-group and $X\in \mathfrak{RC}$.

(3) and (4) can be proved analogously to (1) and (2), respectively.



(5) Since $\mathfrak{B}(K)=K$ for each precompact group $K$, the first inclusion follows. The second one holds by the Glicksberg theorem. To prove that these  inclusions are strict take an arbitrary compact totally disconnected metrizable group $X$. Then $\mathfrak{F}_0(X)$ is metrizable, and hence it is a $k$-group.  By Theorem 1.3 of \cite{Gab-Top-Nul}, $\mathfrak{F}_0(X)$ respects compactness and it is not locally precompact by \cite{Ga8}. Thus the inclusions  are strict.

(6)-(7) Being metrizable the group $\mathfrak{F}_0(\TT)$ belongs to $\mathbf{SA}\subseteq \mathbf{KA}$ (here $\TT=\IR/\ZZ$). However, $\mathfrak{F}_0(\TT)$ does not respect compactness and convergent sequences  by Theorem 1.3 of \cite{Gab-Top-Nul}. Thus $\mathbf{K}\cap \mathfrak{RC} \not= \mathbf{K}$ and $\mathbf{S}\cap \mathfrak{RS} \not= \mathbf{S}$.

To prove that the second inclusion is proper it is enough to find a precompact abelian group $X$ which is not a $k$-group, and hence it is not an $s$-group.
Take an arbitrary non-measurable subgroup $H$ of $\TT$ and set $X:=(\ZZ, T_H)$, where $T_H$ is the smallest group topology on $\ZZ$ for which the elements of $H$ are continuous. Then  the precompact group $X$ does not contain non-trivial convergent sequences  (see \cite{CRT}). Since $X$ is countable, we obtain that $X$ also does not have infinite  compact subsets by  \cite[3.1.21]{Eng}. This immediately implies that the $k_g$-modification of $X$ is discrete. Hence $\mathbf{k}_g(X)=\ZZ_d$ is an infinite discrete $LCA$ group. So $\mathbf{k}_g(X)\not= X$ and $X$ is not a $k$-group. Thus the second inclusion is  proper.

(8) Let $id_X: X\to X^+$ and $id_Y: Y\to Y^+$ be the identity continuous maps. Fix arbitrarily   a compact subset $K$ in $X$. Then $K^+ :=\phi^+(id_X(K))$ is compact in $Y^+$. As $Y\in \mathfrak{RC}$, $K^+$ is compact in $Y$. So $id_Y|_{K^+}$ is a homeomorphism.   Hence $\phi|_K = (id_Y|_{K^+})^{-1} \circ \phi^+\circ (id_X |_K)$ is continuous. So $\phi$ is continuous on any compact subset of $X$. As $X$ is a $k$-group, $\phi$ is continuous (see \cite{Nob2}).

(9) is proved analogously to (8). 
\end{proof}

\begin{remark} {\em
The fact that $(G,\mathcal{T}^+)$ is precompact whenever $(G,\mathcal{T})$ is a $MAP$ abelian group suggests the following two natural questions posed in \cite[1.2]{CTW}  (see also \cite{Tri91}):

(i) Let $(G,\mathcal{U})$ be an abelian precompact group. Must there exist a topological group topology $\mathcal{T}$ for $G$ such that $(G,\mathcal{T})$ is a $LCA$ group and $\mathcal{U}=\mathcal{T}^+$?

(ii) Let $G$ be an abelian group with  topological group topologies $\mathcal{T}$ and $\mathcal{U}$ such that $(G,\mathcal{T})$ is a $LCA$ group, $(G,\mathcal{U})$ is an abelian precompact group, $\mathcal{U}\subseteq \mathcal{T}$, and a subset $A\subseteq G$ is $\mathcal{T}$-compact if and only if $A$ is $\mathcal{U}$-compact. Does it follow that $\mathcal{U}=\mathcal{T}^+$?

In \cite{CTW}, the authors showed that the answer to both these questions is ``no''. Let us show that the group $X$ in the proof of (6)-(7) of Proposition \ref{p:Schur-categorical} also  answers negatively  to these questions. Set $G=\mathbb{Z}$ and  $\mathcal{U}=T_H$. Since $G$ is countable, every locally compact group topology $\mathcal{T}$ on $G$ must be discrete. So $\mathcal{T}^+ = T_\mathbb{T}$ and $\mathcal{U}\subseteq \mathcal{T}$. Further, as it was noticed in the proof of (6)-(7), a subset $A$ of $G$ is $\mathcal{T}$-compact if and only if $A$ is $\mathcal{U}$-compact (if and only if $A$ is finite). However, since $H \not= \TT$, we obtain $\mathcal{U} \not= \mathcal{T}^+$ by \cite{CoR}. }
\end{remark}

We note the following assertion.
\begin{proposition} \label{p:mu-space-Schur}
Let $(G,\tau)$ be a $MAP$  group such that every functionally bounded subset of $G^+$ has compact closure in $G$. Then $G$ respects all properties $\mathcal{P}\in\Pp$ and $G^+$ is a $\mu$-space.
\end{proposition}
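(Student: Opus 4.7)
The plan is to prove, under the hypothesis, the single structural fact that $\tau$ and $\tau^+$ coincide on every $\tau$-compact subset of $G$, and then harvest both conclusions from it.

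First I would observe that every property in $\Pp$ implies functional boundedness: compactness, sequential compactness and countable compactness all imply pseudocompactness, which implies $\mathcal{FB}$; a convergent sequence is functionally bounded because any $f\in C(G^+)$ sends it to a convergent, hence bounded, sequence of reals; and $\mathcal{FB}$ is itself a member of $\Pp$. Hence, for any $\mathcal{P}\in\Pp$ and any $A\in \mathcal{P}(G^+)$, the set $A$ is functionally bounded in $G^+$, so the hypothesis yields that $K:=\overline{A}^{\,\tau}$ is $\tau$-compact.

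Next, since $G$ is $MAP$ both $\tau$ and $\tau^+$ are Hausdorff and $\tau^+\leq \tau$, so the identity map $(K,\tau|_K)\to (K,\tau^+|_K)$ is a continuous bijection from a compact space to a Hausdorff space, hence a homeomorphism. Consequently $\tau|_K=\tau^+|_K$, and the subspace topologies on $A\subseteq K$ induced by $\tau$ and $\tau^+$ coincide. Since every property in $\Pp$ is determined by the subspace topology, this immediately gives $A\in \mathcal{P}(G)$; in the case $\mathcal{P}=\mathcal{S}$ one first notes that the $\tau^+$-limit $x_\infty$ of the sequence lies in $\overline{A}^{\,\tau^+}\subseteq K$, where the inclusion holds because $K$, being $\tau^+$-compact, is $\tau^+$-closed. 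Combined with the trivial inclusion $\mathcal{P}(G)\subseteq \mathcal{P}(G^+)$ furnished by $\tau^+\leq\tau$, this yields $\mathcal{P}(G)=\mathcal{P}(G^+)$, i.e., $G$ respects $\mathcal{P}$.

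For the $\mu$-space assertion, let $B$ be a functionally bounded subset of $G^+$. The hypothesis provides that $K:=\overline{B}^{\,\tau}$ is $\tau$-compact, and by the previous paragraph $K$ is $\tau^+$-compact, in particular $\tau^+$-closed. Since $B\subseteq K$ one gets $\overline{B}^{\,\tau^+}\subseteq K$, while $\tau^+\leq\tau$ gives the reverse inclusion (every $\tau^+$-closed superset of $B$ is $\tau$-closed, hence contains $\overline{B}^{\,\tau}$). Thus $\overline{B}^{\,\tau^+}=K$ is $\tau^+$-compact, so $G^+$ is a $\mu$-space. There is no serious obstacle here: the whole argument rests on the "$\tau=\tau^+$ on $\tau$-compacta" observation, which is just a continuous-bijection-from-compact-to-Hausdorff computation.
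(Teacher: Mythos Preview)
Your argument is correct and follows exactly the same route as the paper: pass to the $\tau$-closure $K$ of $A$, use compactness plus $\tau^+\leq\tau$ Hausdorff to get $\tau|_K=\tau^+|_K$, and read off both conclusions. One small wording issue: the claim that ``every property in $\Pp$ is determined by the subspace topology'' is not literally true for $\mathcal{FB}$ (functional boundedness is relative to the ambient space), but the conclusion for $\mathcal{FB}$ is immediate anyway since $A\subseteq K$ with $K$ $\tau$-compact.
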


\begin{proof}
Let $A\in\mathcal{P}(G^+)$. Then $A$ is functionally bounded in $G^+$. Therefore its $\tau$-closure $\overline{A}$ is compact in $G$, so  the identity map $id: \big(\overline{A}, \tau|_{\overline{A}}\big) \to \big(\overline{A}, \tau^+|_{\overline{A}}\big)$ is a homeomorphism. Hence $G^+$ is a $\mu$-space and $A\in \mathcal{P}(G)$.   Thus $G$ respects $\mathcal{P}$.
\end{proof}

It is clear that the Glicksberg property implies the Schur property, but as we mentioned in the introduction, the converse is not true in general. Some other relations between respecting properties  are given in the next proposition, which gives a partial answer to Problem 7.2 in \cite{Gab-Top-Nul}.
\begin{proposition} \label{p:Schur=sequential-comp}
Let $(G,\tau)$ be a $MAP$ group. Then:
\begin{enumerate}
\item[{\rm (i)}] $G$  has the Schur property if and only if it respects sequential compactness;
\item[{\rm (ii)}] if $G$ respects countable compactness, then $G$ has the Schur property;
\item[{\rm (iii)}] if $G$ respects pseudocompactness, then $G$ has the Schur property;
\item[{\rm (iv)}] if $G$ is a countably $\mu$-space and respects functional boundedness, then $G$ has the Schur property;
\item[{\rm (v)}] if $G$ is complete and respects countable compactness, then $G$ has the Glicksberg property;
\item[{\rm (vi)}] if $G$ is complete and respects pseudocompactness, then $G$ respects countable compactness;
\item[{\rm (vii)}] if $G$ is complete and respects functional boundedness, then $G$ respects all properties $\mathcal{P}\in\Pp$.
\end{enumerate}
\end{proposition}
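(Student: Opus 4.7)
The proof splits into two groups with a common technical core. Parts (i)--(iv) characterize or derive the Schur property via a subsequence-extraction plus Hausdorff-separation argument in $G^+$; parts (v)--(vii) instead use completeness of $G$ together with Lemma~\ref{l:func-bounded-precompact}(i) to promote functional boundedness to compactness of the closure, on which $\tau$ and $\tau^+$ agree.

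For (i), the direction $(\Rightarrow)$ is immediate: if $A\in\mathcal{SC}(G^+)$ then any sequence in $A$ has a $G^+$-convergent subsequence with limit in $A$, which by the Schur property is $G$-convergent to the same limit (by Hausdorffness of $G^+$ and $\tau\geq\tau^+$). For $(\Leftarrow)$, given $(x_n)\to x$ in $G^+$, the set $S:=\{x_n:n\in\NN\}\cup\{x\}$ is compact, hence sequentially compact, in $G^+$, so by hypothesis also in $G$. Any subsequence $(x_{n_k})$ then has a further subsequence converging in $G$ to some $y\in S$; the same subsequence also converges to $y$ in $G^+$, forcing $y=x$. The ``every subsequence has a further subsequence converging to $x$'' criterion yields $x_n\to x$ in $G$.

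The core observation behind (ii)--(iv) is the following. Suppose $x_n\to x$ in $G^+$ but $x_n\not\to x$ in $G$, and extract a subsequence $(x_{n_k})$ of distinct points, each $\neq x$, avoiding a fixed $\tau$-neighborhood $U$ of $x$. If some $y\in G\setminus\{x\}$ were a $\tau$-cluster point of $(x_{n_k})$, choose disjoint $\tau^+$-open neighborhoods $W_y\ni y$ and $W_x\ni x$; since $\tau^+\leq\tau$, $W_y$ is $\tau$-open, yet $x_{n_k}\to x$ in $G^+$ forces all but finitely many $x_{n_k}$ into $W_x$, contradicting the cluster point condition. In particular, $S:=\{x_{n_k}:k\in\NN\}\cup\{x\}$ is an infinite discrete subspace of $(G,\tau)$. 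For (ii), $S$ is $G^+$-compact hence countably compact, and by hypothesis countably compact in $G$ --- but the infinite set $\{x_{n_k}\}\subseteq S$ has no $G$-cluster point in $S$. For (iii), the same $S$ is $G^+$-pseudocompact hence by hypothesis $G$-pseudocompact, contradicting that an infinite discrete space is not pseudocompact. For (iv), $S$ is $G^+$-functionally bounded, so by hypothesis $G$-functionally bounded; countability plus the countably $\mu$-space hypothesis makes $\overline{S}$ compact in $G$, so $(x_{n_k})$ has a $\tau$-cluster point $y\in\overline{S}$, which must be $x$ by the core observation, contradicting $x_{n_k}\notin U$.

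For (v)--(vii), fix $\mathcal{P}\in\Pp$ and $A\in\mathcal{P}(G^+)$. Every property in $\Pp$ implies functional boundedness in the ambient space, so chasing the respective hypotheses --- respect of $\mathcal{CC}$ in (v), of $\mathcal{PC}$ in (vi), and of $\mathcal{FB}$ in (vii) --- yields $A$ functionally bounded in $G$ in each case. Lemma~\ref{l:func-bounded-precompact}(i) (applied in the abelian setting) then makes $A$ precompact in $G$, and completeness of $G$ forces $\overline{A}$ to be $\tau$-compact. Because $\tau^+\leq\tau$, the identity map $(\overline{A},\tau|_{\overline{A}})\to(\overline{A},\tau^+|_{\overline{A}})$ is a continuous bijection from a compact space to a Hausdorff space, hence a homeomorphism; so $\tau$ and $\tau^+$ induce the same subspace topology on $\overline{A}$, and a fortiori on $A$. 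Consequently $A\in\mathcal{P}(G)$ for every $\mathcal{P}\in\Pp$, giving (v) with $\mathcal{P}=\mathcal{C}$, (vi) with $\mathcal{P}=\mathcal{CC}$, and (vii) for all $\mathcal{P}\in\Pp$. The principal obstacle is ensuring that cluster-point arguments transfer cleanly between $\tau$ and $\tau^+$ via the Hausdorff-separation mechanism, and --- for (v)--(vii) --- that completeness promotes precompactness to genuine $\tau$-compactness of $\overline{A}$ inside $G$ itself, which is what allows the decisive step $\tau|_{\overline{A}}=\tau^+|_{\overline{A}}$.
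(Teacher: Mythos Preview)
Your proof is correct and follows essentially the same strategy as the paper's: the subsequence/cluster-point mechanism for (i)--(iv), and precompactness-plus-completeness for (v)--(vii). The organization differs in two places worth noting. First, for (vii) the paper simply cites Theorem~1.2 of \cite{Gab-Top-Nul}, whereas you give a direct, self-contained argument by the same $\overline{A}$-is-compact-so-$\tau|_{\overline{A}}=\tau^+|_{\overline{A}}$ device already used for (v) and (vi); your unified treatment of (v)--(vii) is in this sense more economical. Second, your ``core observation'' packages (ii)--(iv) uniformly, while the paper handles (ii)--(iii) together (using that a countable normal pseudocompact space is countably compact) and (iv) separately via the homeomorphism $\tau|_S=\tau^+|_S$ on the compact set $S$.

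Two minor points. In (i)$(\Leftarrow)$ the implication ``compact, hence sequentially compact'' for $S$ requires the observation that $S$ is countable (so compact Hausdorff countable, hence metrizable); the paper makes this explicit. And Lemma~\ref{l:func-bounded-precompact}, which you invoke for (v)--(vii), is stated only for abelian groups, while the proposition is for arbitrary $MAP$ groups --- but the paper applies it in exactly the same way, so this is not a discrepancy between your argument and the paper's.
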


\begin{proof}
(i) (If $G$ is an abelian group the necessity is proved in Proposition 23 of \cite{BMPT}.) Assume that $(G,\tau)$ has the Schur property and let $A$ be a sequentially compact subset of $G^+$. Take a sequence $S=\{ a_n\}_{n\in\NN}$ in $A$. Then $S$ has a $\tau^+$-convergent subsequence $S'$. By the Schur property $S'$ converges in $\tau$. Hence $A$ is $\tau$-sequentially compact. Thus $G$ respects sequential compactness.

Conversely, assume that $(G,\tau)$ respects sequential compactness  and let $\{ a_n\}_{n\in\NN}$ be a sequence $\tau^+$-converging to an element $a_0\in G$. Set $S:= \{ a_n\}_{n\in\NN} \cup\{ a_0\}$, so $S$ is $\tau^+$-compact. Being countable $S$ is metrizable and hence $\tau^+$-sequentially compact. So $S$ is sequentially compact in $\tau$. We show that $a_n\to a_0$ in $\tau$. Suppose for a contradiction that there is a $\tau$-neighborhood $U$ of $a_0$ which does not contain an infinite subsequence $S'$ of $S$. Then there is a subsequence   $\{ a_{n_k}\}_{k\in\NN}$ of $S'$ which $\tau$-converges to an element $g\in S$. Clearly, $g\not= a_0$ and $a_{n_k}\to g$ in the Bohr topology, and hence $a_n\not\to a_0$ in $\tau^+$, a contradiction. Therefore $a_n\to a_0$ in $\tau$. Thus $G$  has the Schur property.

(ii),(iii) Let $\{ a_n\}_{n\in\NN}$ be a sequence $\tau^+$-converging to an element $a_0\in G$. Set $S:= \{ a_n\}_{n\in\NN} \cup\{ a_0\}$, so $S$ is $\tau^+$-compact. Hence $S$ is $\tau^+$-countably compact. So $S$ is countably compact or pseudocompact in $\tau$, respectively. As any countable space is normal, in both cases $S$ is countably compact in $\tau$. We show that $a_n\to a_0$ in $\tau$. Suppose for a contradiction that there is a $\tau$-neighborhood $U$ of $a_0$ which does not contain an infinite subsequence $S'$ of $S$. Then $S'$ has a $\tau$-cluster point $g\in S$ and clearly $g\not= a_0$. Note that $g$ is also a cluster point of $S'$ in the Bohr topology $\tau^+$. Hence $g=a_0$, a contradiction. Therefore $a_n\to a_0$ in $\tau$. Thus $G$  has the Schur property.

(iv) Let $S=\{ a_n: n\in\NN\} \cup \{ a_0\}$ be a sequence in $G^+$ which $\tau^+$-converges to $a_0$. Since $S$ is also functional bounded in $G^+$, we obtain that $S$ is closed and functionally bounded in $G$.  So $S$ is compact in $G$ because $G$ is a countably $\mu$-space. As the identity map $(S,\tau|_S)\to (S,\tau^+|_S)$ is a homeomorphism, $a_n\to a_0$ in $G$. Thus $G$ has the Schur property.

(v) Let $K$ be a compact subset of $G^+$. Then $K$ is countably compact in $G^+$ and hence in $G$. Since functionally bounded subsets are precompact by Lemma \ref{l:func-bounded-precompact}, the completeness of $G$ and the closeness of $K$ in $G$ imply that $K$ is a compact subset of $G$. Thus $G$ has the Glicksberg property.

(vi) Let $A$ be a countably compact subset of $G^+$. Then $A$ is pseudocompact in $G^+$ and hence in $G$. The completeness of $G$ and Lemma \ref{l:func-bounded-precompact} imply that the closure $\overline{A}$ of $A$ in $G$ is compact. As the identity map $(\overline{A},\tau|_{\overline{A}})\to (\overline{A},\tau^+|_{\overline{A}})$ is a homeomorphism, we obtain that $A$ is countably compact in $G$. Thus $G$ respects countable compactness.

(vii) This is Theorem 1.2 of \cite{Gab-Top-Nul}.
\end{proof}
Proposition \ref{p:Schur=sequential-comp} shows that the Schur property is the weakest one among the properties of $\Pp_0$.
We do not know whether the completeness of $G$ in (v)-(vii) of Proposition \ref{p:Schur=sequential-comp} can  be dropped. Also we do not know an example of a $MAP$ group which respects countable compactness but does not have the Glicksberg property (or respects pseudocompactness but does not respect countable compactness, etc.).



Recall that in a complete group the class of precompact sets and the class of functionally bounded sets are coincide. Recall also that if $G$ is a complete $g$-group, then $G$ and $G^+$ are $\mu$-spaces, see Theorem 3.2 of \cite{HM}. Therefore the next theorem generalizes Theorem 3.3 of \cite{HM}, cf. also Theorem 1.2 of \cite{Gab-Top-Nul}.
\begin{theorem} \label{t:Glicksberg-respecting}
Let $(G,\tau)$ be a $MAP$ group such that $G^+$ is a $\mu$-space. Then the following assertions are equivalent:
\begin{enumerate}
\item[{\rm (i)}] $G$  respects compactness;
\item[{\rm (ii)}] $G$  respects countable compactness and $G$ is a $\mu$-space;
\item[{\rm (iii)}] $G$  respects pseudocompactness and  $G$ is a $\mu$-space;
\item[{\rm (iv)}] $G$  respects functional boundedness   and  $G$ is a $\mu$-space;
\item[{\rm (v)}] $G$ is a $\mu$-space and every non-functionally bounded subset $A$ of $G$ has an infinite subset $B$ which is discrete and $C$-embedded in $G^+$.
\end{enumerate}
If (i)-(v) hold, then  every functionally bounded subset in $G^+$ is relatively compact in $G$.
\end{theorem}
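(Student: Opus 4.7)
The plan is to prove the cycle $(i) \Rightarrow (ii) \Rightarrow (iii) \Rightarrow (iv) \Rightarrow (i)$ and then add $(v)$ as a separate equivalent clause. The final assertion of the theorem drops out once $(i)$ is available: by the $\mu$-space hypothesis on $G^+$, any $B$ functionally bounded in $G^+$ has $G^+$-compact closure, and $(i)$ upgrades that closure to a compact set in $G$.

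The common engine for the cyclic implications is the following. Since $\tau^+ \leq \tau$, every $f \in C(G^+)$ pulls back to $C(G)$, functional boundedness in $G$ passes to functional boundedness in $G^+$, and every $G^+$-closed set is $G$-closed. Moreover, if some $K \subseteq G$ is compact in both $\tau$ and $\tau^+$, then $\tau|_K = \tau^+|_K$, because comparable compact Hausdorff topologies coincide; consequently, every compact-type property of a subset of such a $K$ is absolute. So whenever $A$ is functionally bounded in $G^+$, the $\mu$-space hypothesis on $G^+$ gives $\overline{A}^{G^+}$ compact in $G^+$, and once any of $(i)$--$(iv)$ provides a mechanism to turn this into compactness in $G$, all compact-type properties of subsets of $\overline{A}^{G^+}$ transfer freely between the two topologies.

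With this mechanism each implication is short. For $(i) \Rightarrow (ii)$: a set $A$ functionally bounded in $G$ is functionally bounded in $G^+$, so $\overline{A}^{G^+}$ is $G^+$-compact and, by $(i)$, $G$-compact, showing $G$ is a $\mu$-space; if $A$ is countably compact in $G^+$ it is functionally bounded in $G^+$, and then the topologies agree on $\overline{A}^{G^+}$, so $A$ is countably compact in $G$. For $(ii) \Rightarrow (iii)$ and $(iii) \Rightarrow (iv)$: if $A$ is pseudocompact (respectively, functionally bounded) in $G^+$ then $\overline{A}^{G^+}$ is $G^+$-compact, which is both countably compact and pseudocompact in $G^+$; the previously secured respecting property transfers this to $G$, which entails functional boundedness in $G$, and the $\mu$-space property of $G$ then makes $\overline{A}^{G^+}$ compact in $G$, so the original property passes to $A$. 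For $(iv) \Rightarrow (i)$: a compact $K \subseteq G^+$ is functionally bounded in $G^+$, hence in $G$ by $(iv)$; being $G$-closed and inside a $G$-compact set by the $\mu$-space property of $G$, $K$ is $G$-compact.

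The equivalence with $(v)$ splits into two arguments. The easy direction $(v) \Rightarrow (iv)$ is by contraposition: if $A$ is functionally bounded in $G^+$ but not in $G$, then $(v)$ yields an infinite $B \subseteq A$ that is discrete and $C$-embedded in $G^+$, and extending the map $b_n \mapsto n$ via $C$-embeddedness produces a continuous unbounded function on $G^+$, contradicting the functional boundedness of $A$ there. The forward direction $(iv) \Rightarrow (v)$ is the main obstacle: starting from an $A \subseteq G$ not functionally bounded in $G$, $(iv)$ transfers the failure to $G^+$, giving $f \in C(G^+)$ and $a_n \in A$ with $|f(a_n)| \to \infty$ fast enough that $B = \{a_n\}$ is closed and discrete in $G^+$; the delicate step is to refine $B$ so that it is moreover $C$-embedded in $G^+$, which is not automatic because $G^+$ need not be normal. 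Here I expect to invoke a Hern\'andez--Macario-type argument as in \cite{HM}, using the character structure of the precompact Bohr topology to extend unbounded real functions on $B$ continuously to all of $G^+$.
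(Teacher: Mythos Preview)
Your cyclic scheme and the handling of $(i)\Rightarrow(ii)\Rightarrow(iii)\Rightarrow(iv)\Rightarrow(i)$ match the paper's argument essentially verbatim (the paper runs the full cycle $(i)\Rightarrow(ii)\Rightarrow(iii)\Rightarrow(iv)\Rightarrow(v)\Rightarrow(i)$, but the logic is the same). Your $(v)\Rightarrow(iv)$ by contraposition is also fine.

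The one place you go wrong is in assessing $(iv)\Rightarrow(v)$: there is no ``delicate step'' and no need for any Hern\'andez--Macario machinery or character-theoretic input. Once you have $f\in C(G^+)$ unbounded on $A$, simply choose $a_n\in A$ with $|f(a_{n+1})|>|f(a_n)|+1$ and set $B=\{a_n\}$. Then $f$ is injective on $B$ and $f(B)$ is a closed discrete subset of $\IR$. Given any $g:B\to\IR$, define $h:f(B)\to\IR$ by $h(f(a_n))=g(a_n)$; since $f(B)$ is closed in the normal space $\IR$, extend $h$ to $H\in C(\IR)$, and then $H\circ f\in C(G^+)$ extends $g$. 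So $B$ is automatically $C$-embedded in $G^+$---the single witnessing function $f$ does all the work, and the non-normality of $G^+$ is irrelevant. This is exactly what the paper does, in one sentence.
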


\begin{proof}
(i)$\Rightarrow$(ii) Let $A$ be a  countably compact subset of $G^+$.  As $G^+$ is a $\mu$-space, the $\tau^+$-closure $\overline{A}$ of $A$ is compact in $G^+$. Therefore $\overline{A}$ is compact in $G$ by the Glicksberg property, and hence $A$ is relatively compact in $G$. Since the identity map $(\overline{A},\tau|_{\overline{A}})\to (\overline{A},\tau^+|_{\overline{A}})$ is a homeomorphism, we obtain that $A$ is  countably compact in $G$. Thus $G$ respects countable compactness. The same proof shows that  every functionally bounded subset in $G^+$ is relatively compact in $G$, and in particular $G$ is a $\mu$-space.

(ii)$\Rightarrow$(iii) Let $A$ be a pseudocompact subset of $G^+$. Then the closure $K$ of $A$ in $G^+$ is $\tau^+$-compact because $G^+$ is a $\mu$-space. Therefore $K$ is countably compact in $G$. Being closed $K$ also is compact in $G$ since $G$ is a $\mu$-space. Since the identity map $(K,\tau|_{K})\to (K,\tau^+|_{K})$ is a homeomorphism, we obtain that $A$ is pseudocompact in $G$. Thus $G$ respects pseudocompactness.

The implication (iii)$\Rightarrow$(iv) is proved analogously to (ii)$\Rightarrow$(iii).

(iv)$\Rightarrow$(v) Let $A$ be a non-functionally bounded  subset of $G$. As $G$  respects functional boundedness it follows that  $A$ is not functionally bounded in $G^+$. Let $f$ be a continuous function on $G^+$ which is unbounded on $A$. If we take $B$ as a sequence $\{ a_{n}\}_{n\in\NN}$ in $A$ such that $|f(a_{n+1})|> |f(a_{n})| +1$ for all $n\in\NN$, then $B$ is discrete and $C$-embedded in $G^+$.

(v)$\Rightarrow$(i) Let $K$ be a compact subset of $G^+$. Then $K$ must be functionally bounded in $G$. Since $G$ is a $\mu$-space and $K$ is also closed in $G$ we obtain that $K$ is compact in $G$. Thus $G$ respects compactness.
\end{proof}

In several important classes of $MAP$ groups some of the properties from $\Pp_0$ hold simultaneously.
\begin{proposition}  \label{p:Schur-E-S}
Let $(G,\tau)$ be a complete $MAP$ group.
\begin{enumerate}
\item[{\rm (i)}] If $G^+$ is an $(E)$-space, then $G$ has the Glicksberg property  if and only if  $G$ respects countable compactness.
\item[{\rm (ii)}] If $G^+$ is a $\check{S}$-space, then $G$ has the Schur property if and only if $G$ has the Glicksberg property.
\end{enumerate}
\end{proposition}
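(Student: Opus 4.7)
The plan is to handle the two equivalences separately; in each the easy direction is either general nonsense (Glicksberg trivially implies Schur) or an immediate appeal to Proposition~\ref{p:Schur=sequential-comp}(v), which under completeness already gives that respecting countable compactness forces the Glicksberg property. So only one direction needs real argument in each part.

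For (i), I would assume $G$ has the Glicksberg property and fix a countably compact $A \subseteq G^+$. Since $A$ is in particular relatively countably compact in $G^+$ and $G^+$ is an $(E)$-space, its $\tau^+$-closure $K := \overline{A}^{\tau^+}$ is $\tau^+$-compact. By Glicksberg, $K$ is $\tau$-compact, and then the identity $(K,\tau|_K) \to (K,\tau^+|_K)$ is a continuous bijection between compact Hausdorff spaces, hence a homeomorphism. Thus $\tau$ and $\tau^+$ induce the same topology on $K$, and any $\tau^+$-cluster point of a sequence in $A \subseteq K$ is a $\tau$-cluster point and vice versa. Countable compactness of $A$ in $G^+$ then transfers to countable compactness of $A$ in $G$, giving the respecting property.

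For (ii), I would assume $G$ has the Schur property and fix a $\tau^+$-compact $K$. Because $G^+$ is a $\check{S}$-space, $K$ is sequentially compact in $G^+$, and by Proposition~\ref{p:Schur=sequential-comp}(i) the Schur property upgrades this to sequential compactness of $K$ in $(G,\tau)$. It then remains to prove $K$ is $\tau$-compact. For $\tau$-precompactness, if $K$ were not precompact, Theorem~5 of \cite{BGP} (as cited in Lemma~\ref{l:func-bounded-precompact}) would place an infinite uniformly discrete sequence inside $K$, and no subsequence of such a sequence is Cauchy, let alone $\tau$-convergent, contradicting sequential $\tau$-compactness. Since $\tau^+ \leq \tau$, the $\tau^+$-closed set $K$ is also $\tau$-closed, and completeness of $(G,\tau)$ combined with $\tau$-precompactness and $\tau$-closedness yields $\tau$-compactness of $K$, so $G$ has the Glicksberg property.

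The main difficulty I anticipate is exactly the upgrade from sequential compactness to genuine compactness in (ii); the argument funnels through the dichotomy that any non-precompact subset of a Hausdorff abelian topological group contains an infinite uniformly discrete set, which then precludes a Cauchy (hence convergent) subsequence. The rest is the standard observation that a coarser Hausdorff group topology agrees with the finer one on any compact set, applied on both sides of the equivalence.
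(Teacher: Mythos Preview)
Your proof is correct and follows essentially the same route as the paper's. The only difference is cosmetic: in (ii) you spell out why $K$ is $\tau$-precompact via the uniformly-discrete-subset dichotomy from \cite{BGP}, whereas the paper simply asserts ``$K$ is precompact'' (implicitly via sequential compactness $\Rightarrow$ functional boundedness $\Rightarrow$ precompactness through Lemma~\ref{l:func-bounded-precompact}, which rests on the same result).
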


\begin{proof}
(i) Let $G$ have the Glicksberg property and let $K$ be a countably compact subset of $G^+$. Since $G^+$ is an $(E)$-space, $K$ is relatively compact in $G^+$, and hence its $\tau^+$-closure $\overline{K}$ is compact in $G^+$. So $\overline{K}$ is compact in $G$  and the identity map $(\overline{K},\tau|_{\overline{K}}) \to (\overline{K},\tau^+|_{\overline{K}})$ is a homeomorphism. Therefore  $K$ is countably compact in $G$. Thus $G$ respects countable compactness. The converse assertion follows from (v) of Proposition \ref{p:Schur=sequential-comp}.

(ii) Let $G$ have the Schur property and let $K$ be a compact subset of $G^+$. Then $K$ is sequentially compact in $G^+$, and hence in $G$ by (i) of Proposition \ref{p:Schur=sequential-comp}. As $K$ is precompact and closed in $G$ and $G$ is complete, we obtain that $K$ is compact in $G$. Thus $G$ has the Glicksberg property. The converse assertion is clear.
\end{proof}

For Bohr angelic groups we obtain the following result. 
\begin{theorem}  \label{t:Schur-Bohr-angelic}
Let $(G,\tau)$ be a  $MAP$ group. If $G^+$ is angelic, then the following assertions are equivalent:
\begin{enumerate}
\item[{\rm (i)}] $G$ has the Schur property;
\item[{\rm (ii)}] $G$ has the Glicksberg property;
\item[{\rm (iii)}] $G$ respects  sequential compactness;
\item[{\rm (iv)}] $G$ respects  countable compactness.
\end{enumerate}
If, in addition, $G$ is a countably $\mu$-space, then (i)-(iv) are equivalent to
\begin{enumerate}
\item[{\rm (v)}] every non-functionally bounded subset of $G$ has an infinite subset which is closed and discrete in $G^+$.
\end{enumerate}
\end{theorem}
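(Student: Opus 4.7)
The plan is to close the cycle (i) $\Rightarrow$ (iii) $\Rightarrow$ (iv) $\Rightarrow$ (ii) $\Rightarrow$ (i), using Proposition \ref{p:Schur=sequential-comp} for the pieces that do not require angelicity, and the cited Pryce lemma as the workhorse for the pieces that do. Explicitly, (i) $\Leftrightarrow$ (iii) is Proposition \ref{p:Schur=sequential-comp}(i), (iv) $\Rightarrow$ (i) is Proposition \ref{p:Schur=sequential-comp}(ii), and (ii) $\Rightarrow$ (i) is trivial since a convergent sequence together with its limit is compact.

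For the two implications that need angelicity, observe that $\tau^+\le\tau$ and $(G,\tau^+)$ angelic force $(G,\tau)$ to be angelic as well (by the cited Pryce result on regular coarsenings). For (iii) $\Rightarrow$ (iv): if $A$ is countably compact in $G^+$, Pryce's Lemma 0.3 gives that $A$ is sequentially compact in $G^+$; then (iii) transfers sequential compactness to $\tau$, and sequential compactness always implies countable compactness. For (iv) $\Rightarrow$ (ii): if $K$ is compact in $G^+$, then $K$ is countably compact in $G^+$, by (iv) countably compact in $G$, and by angelicity of $(G,\tau)$ this again equals compactness.

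Now assume in addition that $G$ is a countably $\mu$-space; I will prove (ii) $\Rightarrow$ (v) and (v) $\Rightarrow$ (i). For (ii) $\Rightarrow$ (v): given $A$ not functionally bounded in $G$, pick $f\in C(G)$ and a sequence $B=\{a_n\}\subseteq A$ with $|f(a_{n+1})|>|f(a_n)|+1$. If $B$ were relatively compact in $G^+$, its $\tau^+$-closure would be $\tau^+$-compact, hence $\tau$-compact by (ii), forcing $f$ to be bounded on $B$, a contradiction. So $B$ is not relatively compact in the angelic space $G^+$, hence not relatively countably compact there, so it has an infinite subset $C$ with no $\tau^+$-cluster point; such a $C$ is automatically closed and discrete in $G^+$.

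For (v) $\Rightarrow$ (i): let $a_n\to a_0$ in $G^+$ and suppose $a_n\not\to a_0$ in $\tau$; pick a $\tau$-neighborhood $U$ of $a_0$ and a subsequence $B=\{a_{n_k}\}$ with $B\cap U=\emptyset$. If $B$ is not functionally bounded in $G$, apply (v) to get an infinite $\tau^+$-closed discrete $C\subseteq B$; but $a_{n_k}\to a_0$ in $\tau^+$, so $a_0\notin C$ is a $\tau^+$-cluster point of $C$, contradicting $\tau^+$-closedness. Otherwise $B$ is countable and functionally bounded in $G$, so by the countably $\mu$-space hypothesis its $\tau$-closure $K$ is $\tau$-compact; the continuous bijection $(K,\tau)\to(K,\tau^+)$ is then a homeomorphism, whence $a_0\in K$ and $a_{n_k}\to a_0$ in $\tau$, again contradicting $a_{n_k}\notin U$. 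The main delicacy will be the case split in this last step: one has to ensure that a countable witness of non-functional-boundedness lets one invoke the countably $\mu$-space assumption, and that the two topologies coincide on the resulting compactum so that the $\tau^+$-limit is in fact a $\tau$-limit.
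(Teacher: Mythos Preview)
Your proof is correct and follows essentially the same approach as the paper: the same appeals to Proposition~\ref{p:Schur=sequential-comp} for the implications not needing angelicity, the same use of Pryce's lemma (together with the observation that $G$ inherits angelicity from $G^+$) for the ones that do, and the same mechanism for (ii)$\Rightarrow$(v) and (v)$\Rightarrow$(i). The only differences are organizational: the paper derives (ii) and (iv) directly from (iii) in one stroke rather than going (iii)$\Rightarrow$(iv)$\Rightarrow$(ii), and in (v)$\Rightarrow$(i) it shows the full sequence $S=\{a_n\}\cup\{a_0\}$ is functionally bounded outright instead of passing to a subsequence and splitting into cases.
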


\begin{proof}
The equivalence (i)$\Leftrightarrow$(iii) and the implications (ii)$\Rightarrow$(i) and (iv)$\Rightarrow$(i) follow from Proposition \ref{p:Schur=sequential-comp}. 

(iii)$\Rightarrow$(ii),(iv) Let $K$ be a compact subset or a countably compact subset of $G^+$. Then $K$ is sequentially compact in $G^+$ by \cite[Lemma~0.3]{Pryce}, and hence $K$ is sequentially compact in $G$. Since $G$ is also angelic, we obtain that $K$ is compact or countably compact in $G$. Thus $G$ has the Glicksberg property or respects countable compactness, respectively.

(ii)$\Rightarrow$(v) Suppose for a contradiction that there is a non-functionally bounded subset $A$ in $G$ whose every countably infinite subset is either non-closed in $G^+$ or is not discrete in $G^+$. So, in both cases, every countably infinite subset of $A$ has a cluster point in $G^+$. Therefore $A$ is relatively countably compact in $G^+$. The angelicity of $G^+$ implies that the closure $\overline{A}$ of $A$ in $G^+$ is compact in $G^+$. Hence $\overline{A}$ is compact in $G$ by the Glicksberg property. Thus $A$ is functionally bounded in $G$, a contradiction.

(v)$\Rightarrow$(i) Let $g_n\to e$ in $G^+$, where $e$ is the identity of $G$. Set $S:=\{ g_n\}_{n\in\NN} \cup \{ e\}$, so $S$ is a compact subset of $G^+$. Let us show that $S$ is functionally bounded in $G$. Indeed, otherwise, there would exist a subsequence $\{ g_{n_k}\}_{k\in\NN}$ of $S$ which is closed and discrete in $G^+$. Then $g_n$ does not converge to $e$ in $G^+$, a contradiction. So $S$ is functionally bounded in $G$. Thus the set $S$ being also countable and closed in $G$  is compact in $G$ (recall that $G$ is a countably $\mu$-space). Therefore the identity map $(S,\tau|_S)\to (S,\tau^+|_S)$ is a homeomorphism. Hence $g_n\to e$ in $G$. Thus $G$ has the Schur property.
\end{proof}

Now we are ready to prove   Theorem \ref{t:G-base-Schur}.
\begin{proof}[Proof of Theorem \ref{t:G-base-Schur}]
The equivalences (i)$\Leftrightarrow$(ii)$\Leftrightarrow$(iii)$\Leftrightarrow$(iv)$\Leftrightarrow$(v) follow from Theorem \ref{t:Schur-Bohr-angelic} and (ii) of Proposition \ref{p:web-compact-Bohr-angelic}. The implications (vi)$\Rightarrow$(i) and  (vii)$\Rightarrow$(i) follow from (iii) and (iv) of Proposition \ref{p:Schur=sequential-comp}, respectively. Finally, the implications (ii)$\Rightarrow$(vi) and  (ii)$\Rightarrow$(vii) follow from Theorem \ref{t:Glicksberg-respecting}.
\end{proof}

\begin{corollary}  \label{c:G-base-Schur-Lind}
For a Lindel\"{o}f $MAP$ abelian group $G$ with a $\GG$-base the following assertions are equivalent:
\begin{enumerate}
\item[{\rm (i)}] there is $\mathcal{P}\in\Pp$ such that $G$ respects $\mathcal{P}$;
\item[{\rm (ii)}] $G$ respects all properties $\mathcal{P}\in\Pp$;
\item[{\rm (iii)}] every non-functionally bounded subset of $G$ has an infinite subset which is closed and discrete in $G^+$.
\end{enumerate}
If, in addition, $G$ has the $\CP$-property, then (i)-(iii) are equivalent to
\begin{enumerate}
\item[{\rm (iv)}] every non-precompact sequence in $G$ has an infinite subsequence which is closed and discrete in $G^+$.
\end{enumerate}
\end{corollary}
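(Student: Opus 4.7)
The plan is to deduce the corollary directly from Theorem \ref{t:G-base-Schur} by verifying that both the initial hypothesis (``$G$ is a countably $\mu$-space'') and the additional hypothesis (``$G^+$ is a $\mu$-space'') of that theorem are automatic under Lindel\"ofness. The key ingredient is the classical fact that a Lindel\"of Tychonoff space is a $\mu$-space: it is realcompact (Hewitt--Katětov--Shirota), and for a realcompact space $X$ the closure in $\beta X$ of a functionally bounded subset already lies in $\upsilon X = X$ and is compact there. I would first apply this to $G$, concluding that $G$ is a $\mu$-space and hence a countably $\mu$-space. Next, since the identity map $G \to G^+$ is continuous, $G^+$ inherits Lindel\"ofness and the same argument yields that $G^+$ is a $\mu$-space. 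Both hypotheses of Theorem \ref{t:G-base-Schur} are therefore satisfied, so conditions (i)--(vii) of that theorem are mutually equivalent.

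Now the six respecting conditions indexed by $\Pp$ translate into (i)--(iv), (vi), (vii) of Theorem \ref{t:G-base-Schur} in the obvious way, while condition (iii) of the Corollary is literally condition (v) of that theorem. The equivalence of all of these in Theorem \ref{t:G-base-Schur} immediately forces ``$G$ respects some $\mathcal{P} \in \Pp$'' and ``$G$ respects every $\mathcal{P} \in \Pp$'' to coincide and to be equivalent to Corollary (iii). This settles (i)$\Leftrightarrow$(ii)$\Leftrightarrow$(iii).

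For the $\CP$-extension, the main tool is Lemma \ref{l:func-bounded-precompact}(ii): under the $\CP$-property, a separable subset of $G$ is functionally bounded precisely when it is precompact. For (iii)$\Rightarrow$(iv), a non-precompact sequence is countable, hence separable, hence non-functionally bounded by the lemma; (iii) then produces an infinite subset closed and discrete in $G^+$, which one reindexes as a subsequence. For (iv)$\Rightarrow$(iii), given a non-functionally bounded $A \subseteq G$, pick $f \in C(G)$ unbounded on $A$ and choose $a_n \in A$ with $|f(a_n)| \to \infty$; then $\{a_n\}$ is a non-functionally bounded, and therefore (again by the lemma) non-precompact, sequence in $A$, so (iv) supplies the required infinite closed-and-discrete subsequence lying inside $A$.

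The only non-formal input is the standard fact that Lindel\"of Tychonoff spaces are $\mu$-spaces; everything else is a short unwinding of Theorem \ref{t:G-base-Schur} and Lemma \ref{l:func-bounded-precompact}. I do not anticipate a serious obstacle: the proof is essentially bookkeeping once Lindel\"ofness is used to feed both hypotheses of Theorem \ref{t:G-base-Schur} and to reduce the sequence-vs-subset distinction in (iii) versus (iv) to the separable case covered by Lemma \ref{l:func-bounded-precompact}(ii).
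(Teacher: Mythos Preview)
Your proposal is correct and follows essentially the same route as the paper: Lindel\"ofness of $G$ passes to $G^+$ via the continuous identity map, both spaces are therefore $\mu$-spaces, and Theorem~\ref{t:G-base-Schur} yields (i)$\Leftrightarrow$(ii)$\Leftrightarrow$(iii); the equivalence (iii)$\Leftrightarrow$(iv) under the $\CP$-property is reduced to Lemma~\ref{l:func-bounded-precompact}(ii) exactly as in the paper, though you spell out the two directions in more detail than the paper does.
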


\begin{proof}
Since $G$ is Lindel\"{o}f, the group $G^+$ is also Lindel\"{o}f. Therefore $G$ and $G^+$ are $\mu$-spaces. Now Theorem \ref{t:G-base-Schur} implies the equivalences (i)$\Leftrightarrow$(ii)$\Leftrightarrow$(iii). If $G$ has the $\CP$-property, the equivalence (iii)$\Leftrightarrow$(iv) follows from Lemma \ref{l:func-bounded-precompact}.
\end{proof}



\section{Real locally convex spaces and respecting properties} \label{sec:Free-Schur}


Following \cite{Mar} (\cite{GM}),  the {\em  free locally convex space}  $L(X)$  (the {\em  free topological vector space} $\VV(X)$) on a Tychonoff space $X$ is a pair consisting of a locally convex space $L(X)$ (a topological vector space $\VV(X)$, resp.) and  a continuous map $i: X\to L(X)$ ($i: X\to \VV(X)$, resp.)  such that every  continuous map $f$ from $X$ to a locally convex space  $E$ (a topological vector space $E$, resp.) gives rise to a unique continuous linear operator ${\bar f}: L(X) \to E$ (${\bar f}: \VV(X) \to E$)  with $f={\bar f} \circ i$. The free locally convex space $L(X)$ and the free topological vector space $\VV(X)$ always exist and are essentially unique. The set $X$ forms a Hamel basis for $L(X)$ and $\VV(X)$, and  the map $i$ is a topological embedding, see \cite{GM,Rai,Usp2}.

For a Tychonoff space  $X$, let $\CC(X)$ be the space $C(X)$ endowed with the compact-open topology $\tau_k$. Then the sets of the form
\[
[K;\e]:=\{ f\in C(X): |f(x)|<\e \; \forall x\in K\}, \mbox{ where } K \mbox{ is compact and } \e>0,
\]
form a base of open neighborhoods at zero in $\tau_k$.

Denote by $M_c(X)$ the space of all real regular Borel measures on $X$ with compact support. It is well-known that the dual space of $\CC(X)$ is $M_c(X)$, see \cite{Jar}. Denote by $\tau_e$ the polar topology on $M_c(X)$ defined by the family of all equicontinuous  pointwise bounded subsets of $C(X)$.
We shall use the following deep result of Uspenski\u{\i} \cite{Usp2}.
\begin{theorem}[\cite{Usp2}] \label{t:Free-complete-L}
Let $X$ be a Tychonoff space and let $\mu X$ be the Dieudonn\'{e} completion of $X$. Then the completion $\overline{L(X)}$ of $L(X)$ is topologically isomorphic to $\big(M_c(\mu X),\tau_e\big)$. Consequently, $L(X)$ is complete if and only if $X$ is Dieudonn\'{e} complete and does not have infinite compact subsets.
\end{theorem}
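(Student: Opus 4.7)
The plan is to build an explicit topological isomorphism $\Phi: \overline{L(X)} \to (M_c(\mu X),\tau_e)$ by first defining $\Phi$ on the dense subspace $L(X)$, then identifying its topology, then showing density in the target, then invoking completeness of the target. First I would define the canonical linear map $\Phi: L(X) \to M_c(\mu X)$ by sending each generator $x \in X$ to the Dirac measure $\delta_x \in M_c(X) \subseteq M_c(\mu X)$. The universal property of $L(X)$ (applied to the continuous map $x \mapsto \delta_x$ into the lcs $(M_c(\mu X),\tau_e)$, whose continuity follows because evaluation at $x$ is dominated by the seminorms defining $\tau_e$) yields a continuous linear extension, and injectivity is immediate from linear independence of the $\delta_x$.

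Next, the crucial step is to show that $\Phi$ is a topological embedding, i.e., that the free lcs topology on $L(X)$ coincides with the restriction of $\tau_e$. The dual of $L(X)$ is canonically $C(X)$, and by the defining property of the Dieudonn\'e completion, every $f \in C(X)$ extends uniquely to $\tilde f \in C(\mu X)$, giving an identification $C(X) \cong C(\mu X)$. The topology of $L(X)$ is the polar topology determined by the equicontinuous pointwise-bounded subsets of $C(X) = L(X)'$: every continuous seminorm on $L(X)$ has the form $\mu \mapsto \sup_{f \in H}|\mu(f)|$ for such an $H$. Under the identification $C(X) \cong C(\mu X)$ these are exactly the defining seminorms of $\tau_e$ on $M_c(\mu X)$, pulled back to $\Phi(L(X))$, so $\Phi$ is a topological embedding.

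Third, I would establish $\tau_e$-density of $\Phi(L(X))$ in $M_c(\mu X)$ by discrete approximation: given $\mu \in M_c(\mu X)$ with compact support $K$, an equicontinuous pointwise-bounded $H \subseteq C(\mu X)$, and $\eps > 0$, the equicontinuity of $H$ produces a finite Borel partition $\{B_1,\dots,B_n\}$ of $K$ on which every $f \in H$ oscillates by less than $\eps$. Choosing $x_i \in B_i \cap X$ (possible since $X$ is dense in $\mu X$ and $\mu$ is regular, after shrinking the $B_i$ if necessary) and forming $\nu := \sum_i \mu(B_i)\delta_{x_i} \in \Phi(L(X))$ gives $\sup_{f \in H}|\mu(f) - \nu(f)| \leq \eps\cdot |\mu|(K)$. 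Completeness of $(M_c(\mu X),\tau_e)$ then follows from the standard fact that it is the strong dual of the complete Hausdorff lcs $(C(\mu X),\tau_k)$ restricted to compactly supported measures with $\mu X$ Dieudonn\'e complete. Together these give $\overline{L(X)} \cong (M_c(\mu X),\tau_e)$.

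For the consequence, $L(X)$ is complete if and only if $\Phi(L(X)) = M_c(\mu X)$. This forces $\mu X = X$, for if $y \in \mu X \setminus X$ then $\delta_y \in M_c(\mu X) \setminus \Phi(L(X))$; and it forces every compact subset of $X$ to be finite, since an infinite compact $K \subseteq X$ contains a nontrivial convergent sequence $\{x_n\}$ whose associated measure $\sum_{n \geq 1} 2^{-n}\delta_{x_n}$ lies in $M_c(X) \setminus \Phi(L(X))$. Conversely, under both conditions every element of $M_c(X) = M_c(\mu X)$ is a finite linear combination of Dirac measures, so $\Phi(L(X)) = M_c(\mu X)$. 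The principal obstacle is the second paragraph: identifying the intrinsic free lcs topology on $L(X)$ with the polar topology defined by equicontinuous subsets of $C(X)$. This is the deep content of Uspenski\u{\i}'s theorem and requires a careful analysis relating continuous seminorms on $L(X)$ to continuous pseudometrics on $X$ via the fine uniformity; once this identification is in hand the rest of the argument is an exercise in measure approximation and polar duality.
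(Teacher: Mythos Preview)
The paper does not prove this theorem: it is quoted from Uspenski\u{\i}~\cite{Usp2} and used as a black box (note the attribution in the theorem heading and the absence of any proof environment following it). There is therefore no ``paper's own proof'' to compare your proposal against.

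That said, a few remarks on your outline. The overall architecture---embed $L(X)$ into $(M_c(\mu X),\tau_e)$, identify the topologies, prove density, invoke completeness---is the natural one, and you correctly flag the topology identification in the second paragraph as the genuine content. Two local issues: first, in the ``Consequently'' direction your claim that an infinite compact $K\subseteq X$ contains a nontrivial convergent sequence is false (take $K=\beta\NN$); what you want instead is any countably infinite subset $\{x_n\}\subseteq K$, since $\sum_{n\geq 1}2^{-n}\delta_{x_n}$ has compact support $\overline{\{x_n\}}\subseteq K$ regardless of convergence. Second, your justification for completeness of $(M_c(\mu X),\tau_e)$ is hand-waved: it is not literally a strong dual, and ``restricted to compactly supported measures'' does not automatically preserve completeness; this step needs an actual argument (e.g., via the explicit description of $\tau_e$ and the fact that $\mu X$ is a $\mu$-space). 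Finally, you are right that the embedding step is where the real work lies; Uspenski\u{\i}'s argument goes through the fine uniformity on $X$ and is not a formality.
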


\begin{corollary} \label{p:Ck-Mc-compatible}
Let $X$ be a Dieudonn\'{e} complete space. Then the topology $\tau_e$ on $M_c(X)$ is compatible with the duality $(\CC(X),M_c(X))$.
\end{corollary}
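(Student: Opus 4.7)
The plan is to deduce the compatibility directly from Theorem \ref{t:Free-complete-L} by passing between $L(X)$ and its completion. Recall that a topology $\nu$ on $M_c(X)$ is compatible with the duality $(\CC(X), M_c(X))$ precisely when the topological dual of $(M_c(X), \nu)$, under the canonical integration pairing, is all of $C(X)$. So the task reduces to computing the topological dual of $(M_c(X), \tau_e)$.

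First I would use the hypothesis that $X$ is Dieudonn\'e complete to get $\mu X = X$, whence Theorem \ref{t:Free-complete-L} identifies $(M_c(X),\tau_e)$ with the completion $\overline{L(X)}$ of the free locally convex space $L(X)$. In particular, $L(X)$ sits as a dense linear subspace of $(M_c(X),\tau_e)$. Next I would identify the topological dual of $L(X)$ with $C(X)$: by the defining universal property of $L(X)$, every continuous map $f\colon X\to \IR$ lifts uniquely to a continuous linear functional $\bar f\colon L(X)\to \IR$, and conversely every continuous linear functional on $L(X)$ restricts along the embedding $i\colon X\hookrightarrow L(X)$ to a continuous real-valued function on $X$; these assignments are inverse to each other. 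Since $i(x)$ corresponds to the Dirac measure $\delta_x$ under the embedding $L(X)\hookrightarrow M_c(X)$, the identification $\bar f\leftrightarrow f$ is exactly the integration pairing $\bar f(\mu)=\int f\,d\mu$ on finitely supported measures.

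Finally I would invoke the standard fact that an lcs and its completion share the same topological dual (every continuous linear functional on a dense subspace extends uniquely by continuity, and conversely restriction is injective by density). Applying this to the dense inclusion $L(X)\hookrightarrow (M_c(X),\tau_e)$ gives that the topological dual of $(M_c(X),\tau_e)$ is $C(X)$, with the pairing given by integration. This is precisely the condition that $\tau_e$ is compatible with the duality $(\CC(X),M_c(X))$. The only subtle point, and essentially the sole bookkeeping obstacle, is checking that the universal-property identification of $L(X)^\ast$ with $C(X)$ matches the integration pairing used to define the duality $(\CC(X),M_c(X))$; but this is immediate from the computation $\bar f(\delta_x)=f(x)=\int f\,d\delta_x$ and linear/continuous extension.
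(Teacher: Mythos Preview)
Your proof is correct and follows essentially the same route as the paper: identify $(M_c(X),\tau_e)$ with $\overline{L(X)}$ via Theorem~\ref{t:Free-complete-L} (using $\mu X=X$), invoke $L(X)'=C(X)$, and pass to the completion. The paper simply cites $L(X)'=C(X)$ as known (from \cite{Rai}) and omits the bookkeeping about the integration pairing, whereas you spell out both points explicitly.
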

\begin{proof}
It is well-known that $L(X)'=C(X)$, see \cite{Rai}. Now Theorem \ref{t:Free-complete-L} implies $(M_c(X),\tau_e)'=L(X)'=C(X)$.
\end{proof}

We need  also the following fact, see \S 5.10 in \cite{NaB}. 
\begin{proposition} \label{p:Ascoli-Free-Ck}
Let $X$ be a Tychonoff space and let $\KK$ be an equicontinuous pointwise bounded subset of $C(X)$. Then the pointwise closure ${\bar A}$ of $A$ is $\tau_k$-compact and equicontinuous.  
\end{proposition}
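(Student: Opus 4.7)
The plan is to follow the classical Ascoli--Arzel\`a argument, broken into three steps: (1) show that equicontinuity and pointwise boundedness both pass to the pointwise closure; (2) deduce pointwise compactness via Tychonoff's theorem; (3) show that on equicontinuous sets the pointwise topology agrees with $\tau_k$. Throughout I interpret $\KK=A$ (the statement mixes the two names).

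First I would verify that $\bar A$ is equicontinuous. Fix $x_0\in X$ and $\e>0$. By equicontinuity of $A$ at $x_0$, pick a neighborhood $U$ of $x_0$ with $|f(x)-f(x_0)|<\e/3$ for all $x\in U$ and all $f\in A$. If $g\in\bar A$ and $x\in U$, then for every finite set $\{x,x_0\}$ and every $\delta>0$ there is $f\in A$ with $|f(x)-g(x)|<\delta$ and $|f(x_0)-g(x_0)|<\delta$; taking $\delta=\e/3$ and combining with the triangle inequality yields $|g(x)-g(x_0)|<\e$. So $\bar A$ is equicontinuous at $x_0$, hence on $X$. Pointwise boundedness is obvious: if $|f(x)|\le M_x$ for every $f\in A$, the same holds for every $g\in\bar A$.

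Second, set $K_x:=[-M_x,M_x]$ and regard $C(X)$ as a subset of the product $\RR^X$ equipped with the pointwise topology; then $\bar A$ is a closed subset of the compact space $\prod_{x\in X}K_x$, so $\bar A$ is pointwise compact. Third, I would check that pointwise and compact-open topologies coincide on the equicontinuous set $\bar A$. The nontrivial inclusion is that every $\tau_k$-neighborhood of a point $g\in\bar A$ contains a pointwise neighborhood in $\bar A$. Given a compact $K\subseteq X$ and $\e>0$, use the equicontinuity of $\bar A$ to choose, for each $x\in K$, a neighborhood $U_x$ with $|h(y)-h(x)|<\e/3$ for all $h\in\bar A$ and $y\in U_x$, and extract a finite subcover $U_{x_1},\dots,U_{x_n}$ of $K$. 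Then the pointwise open set
\[
W:=\{h\in\bar A:|h(x_i)-g(x_i)|<\e/3,\ i=1,\dots,n\}
\]
satisfies $\sup_{y\in K}|h(y)-g(y)|<\e$ for every $h\in W$, by a standard three-$\e$ estimate, so $W\subseteq g+[K;\e]$.

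Combining these, the identity map $\mathrm{id}\colon (\bar A,\text{pointwise})\to (\bar A,\tau_k)$ is a continuous bijection whose domain is compact; hence it is a homeomorphism, and in particular $\bar A$ is $\tau_k$-compact. There is no serious obstacle: the only delicate point is the local equicontinuity argument on a general Tychonoff space, but the regularity of $X$ is not needed since the estimates are carried out purely in the values of the functions.
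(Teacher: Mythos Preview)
Your proof is correct and follows the classical Ascoli--Arzel\`a argument. The paper itself does not prove this proposition: it simply records it as a known fact and refers the reader to \S 5.10 of Narici--Beckenstein, so there is no ``paper's own proof'' to compare against. Your write-up is essentially what one finds in that reference.

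One small point worth tightening: in Step~2 you assert that $\bar A$ is closed in $\prod_{x}K_x\subseteq\RR^X$, but $\bar A$ as written is the closure of $A$ in $C_p(X)$, which a priori need not be closed in $\RR^X$. The fix is already implicit in your Step~1: the same $\e/3$ estimate applies verbatim to any $g$ in the closure of $A$ taken in $\RR^X$ and shows that such a $g$ is continuous at every point, so the closure in $\RR^X$ already lies in $C(X)$ and the two closures coincide. Stating this explicitly removes the only possible doubt.
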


Following \cite{BG},  a Tychonoff space $X$ is called {\em Ascoli} if every compact subset $\KK$ of $\CC(X)$ is equicontinuous. Note that $X$ is Ascoli if and only if the canonical map $L(X)\to \CC(\CC(X))$ is an embedding of locally convex spaces, see \cite{Gabr-LCS-Ascoli}. Below we give another characterization of Ascoli spaces. Denote by $\tau_k^M$ the compact-open topology on $M_c(X)$.
\begin{proposition} \label{p:characterization-Ascoli}
Let $X$ be a Tychonoff space. Then:
\begin{enumerate}
\item[{\rm (i)}] $\tau_e\leq\tau_k^M$ on $M_c(X)$;
\item[{\rm (ii)}]  $\tau_e=\tau_k^M$ on $M_c(X)$ if and only if  $X$ is an Ascoli space.
\end{enumerate}
\end{proposition}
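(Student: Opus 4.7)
The plan is to work directly with the defining neighborhood bases of zero: a base for $\tau_e$ consists of polars $A^\circ$ indexed by equicontinuous pointwise bounded subsets $A \subseteq C(X)$, while a base for $\tau_k^M$ consists of polars $K^\circ$ indexed by $\tau_k$-compact subsets $K \subseteq C_k(X)$, with the pairing $\langle f,\mu\rangle = \mu(f)$ coming from $(C_k(X))' = M_c(X)$. Since polars reverse inclusions, the whole argument will amount to comparing $\tau_k$-compact subsets of $C_k(X)$ with equicontinuous pointwise bounded subsets of $C(X)$.

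For (i), I would appeal once to Proposition \ref{p:Ascoli-Free-Ck}: given any equicontinuous pointwise bounded $A$, its pointwise closure $\bar A$ is $\tau_k$-compact and still equicontinuous. Then $A \subseteq \bar A$ yields $\bar A^\circ \subseteq A^\circ$, so each basic $\tau_e$-neighborhood contains a basic $\tau_k^M$-neighborhood, giving $\tau_e \leq \tau_k^M$. For the easy direction of (ii), under the Ascoli hypothesis every $\tau_k$-compact $K \subseteq C_k(X)$ is equicontinuous; it is also pointwise bounded because evaluations are $\tau_k$-continuous. So $K^\circ$ is already a basic $\tau_e$-neighborhood, which combined with (i) forces $\tau_e = \tau_k^M$.

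The nontrivial direction is the converse of (ii). Assuming $\tau_e = \tau_k^M$ and taking an arbitrary $\tau_k$-compact $K \subseteq C_k(X)$, the polar $K^\circ$ is a $\tau_e$-neighborhood of zero, hence contains some $\varepsilon A^\circ$ for an equicontinuous pointwise bounded $A \subseteq C(X)$ and $\varepsilon > 0$. Passing to bipolars in the duality $(C(X), M_c(X))$ reverses the inclusion to
\[
K \subseteq K^{\circ\circ} \subseteq \varepsilon^{-1} A^{\circ\circ}.
\]
By the bipolar theorem together with the Mackey--Arens identification $(C_k(X))' = M_c(X)$ (from the paper, quoting \cite{Jar}), $A^{\circ\circ}$ coincides with the $\tau_k$-closed absolutely convex hull of $A$. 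It remains to show this set is equicontinuous, from which $K$ inherits equicontinuity and $X$ is Ascoli.

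The main obstacle is exactly this last step, which rests on two classical but essential facts one must assemble: first, that the absolutely convex hull $\Gamma(A)$ of an equicontinuous family is equicontinuous (this is a direct computation with the defining moduli of equicontinuity), and second, that pointwise limits of equicontinuous families are again equicontinuous, so the $\tau_k$-closure (which sits inside the pointwise closure) of $\Gamma(A)$ remains equicontinuous. Putting these together identifies $A^{\circ\circ}$ as an equicontinuous set and closes the argument. The rest of the proof is bookkeeping in terms of polars and does not require further input beyond Proposition \ref{p:Ascoli-Free-Ck} and the duality between $C_k(X)$ and $M_c(X)$.
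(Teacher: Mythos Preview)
Your proposal is correct and follows essentially the same route as the paper: compare the two neighborhood bases via polars, invoke Proposition~\ref{p:Ascoli-Free-Ck} for (i) and the forward direction of (ii), and for the converse of (ii) trap $K$ inside the bipolar of an equicontinuous pointwise bounded set and conclude equicontinuity. The only cosmetic difference is that the paper pre-processes $A$ to be absolutely convex and (via Proposition~\ref{p:Ascoli-Free-Ck}) pointwise closed so that $A^{\circ\circ}=A$ directly, whereas you compute $A^{\circ\circ}$ as the closed absolutely convex hull and then check that this hull inherits equicontinuity---the underlying facts used are identical.
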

\begin{proof}
(i) immediately follows from Proposition \ref{p:Ascoli-Free-Ck}.

(ii) Assume that $X$ is an Ascoli space. By (i) we have to show that $\tau_k^M\leq\tau_e$. Take a standard $\tau_k^M$-neighborhood of zero
\[
[K;\e]=\{ \nu\in M_c(X): |\nu(f)|<\e \; \forall f\in K\},
\]
where $K$ is a compact subset of $\CC(X)$ and $\e>0$. Since $X$ is Ascoli, $K$ is equicontinuous and clearly  pointwise bounded. Therefore $[K;\e]$ is also a $\tau_e$-neighborhood of zero. Thus $\tau_k^M\leq\tau_e$.

Conversely, let $\tau_e=\tau_k^M$ on $M_c(X)$ and let $K$ be a compact subset of $\CC(X)$. Then the polar $K^\circ$ of $K$ is also a $\tau_e$-neighborhood of zero in $M_c(X)$. So there is an absolutely convex, equicontinuous and pointwise bounded subset $A$ of $C(X)$  such that $A^\circ\subseteq K^\circ$. By Proposition \ref{p:Ascoli-Free-Ck} we can assume that $A$ is pointwise closed. Now the Bipolar theorem implies that $K\subseteq A^{\circ\circ} =A$. So $K$ is equicontinuous. Thus $X$ is an Ascoli space.
\end{proof}

Recall that a locally convex space $E$ is called {\em semi-Montel} if every bounded subset of $E$ is relatively compact, and $E$ is a {\em Montel space} if it is a barrelled semi-Montel space.
For real semi-Montel spaces, the following result strengthens Proposition 2.4 of \cite{Gabr-free-resp}. 
\begin{theorem} \label{t:Schur-Montel-respected}
A real semi-Montel space $E$ respects all properties $\mathcal{P}\in\Pp$.
\end{theorem}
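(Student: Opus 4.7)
The plan is to reduce Theorem \ref{t:Schur-Montel-respected} to Proposition \ref{p:mu-space-Schur}, which asserts that any $MAP$ group $G$ in which every functionally bounded subset of $G^+$ has compact closure in $G$ respects every $\mathcal{P}\in\Pp$. Since $E$ is a real lcs, its underlying additive group is locally quasi-convex (hence $MAP$) by \cite[Ch.~2]{Ban}, so the proposition is applicable.

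The key step is to verify that semi-Montelness supplies precisely the hypothesis of Proposition \ref{p:mu-space-Schur}. Let $A\in\mathcal{FB}(E^+)$. By Proposition \ref{p:FB-weakly-bound}, $A\in\Bo(E)$, i.e.\ $A$ is bounded in $E$. By definition of a semi-Montel space, every bounded subset of $E$ is relatively compact, so the closure $\overline{A}$ of $A$ in $E$ is compact. Thus the hypothesis of Proposition \ref{p:mu-space-Schur} is fulfilled, and we conclude that $E$ respects all properties $\mathcal{P}\in\Pp$ (and, as a bonus, that $E^+$ is a $\mu$-space).

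There is no substantive obstacle in the argument: the two ingredients, Proposition \ref{p:FB-weakly-bound} (relating functional boundedness in $E^+$ to boundedness in $E$) and the defining property of semi-Montel spaces, combine immediately to yield the hypothesis needed to invoke Proposition \ref{p:mu-space-Schur}. The only subtlety worth emphasizing is that we do \emph{not} need to settle whether $\mathcal{FB}(E_w)=\mathcal{FB}(E^+)$ (a question left open before the statement of the theorem), because Proposition \ref{p:FB-weakly-bound} applies directly to $E^+$ and circumvents passing through $E_w$.
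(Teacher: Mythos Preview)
Your proof is correct and follows essentially the same route as the paper: use Proposition~\ref{p:FB-weakly-bound} together with the semi-Montel hypothesis to verify that every functionally bounded subset of $E^+$ has compact closure in $E$, and then invoke Proposition~\ref{p:mu-space-Schur}. The paper's write-up phrases the opening step slightly differently (starting from an arbitrary $A\in\mathcal{P}(E^+)$ and noting it is functionally bounded in $E^+$), but the substance is identical.
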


\begin{proof}
Let $A\in \mathcal{P}(E^+)$. Then $A$ is a functionally bounded subset of $E^+$. Hence $A$ is bounded in $E$ by Proposition \ref{p:FB-weakly-bound}. Therefore the closure $\overline{A}$ of $A$ in $E$ is compact and Proposition \ref{p:mu-space-Schur} applies. The last assertion follows from (\ref{equ:weak-Bohr-property}).
\end{proof}

Recall (see Theorem 15.2.4 of \cite{NaB}) that a locally convex space $E$ is semi-reflexive if and only if every bounded subset $A$ of $E$ is relatively weakly compact. In spite of the first assertion of the next corollary is known, see Corollary~4.15 of \cite{GalHer}, we give its simple and short proof.
\begin{corollary} \label{c:Schur-semi-reflexive}
Let $E$ be a real semi-reflexive lcs. Then $E$ has the  Glicksberg property if and only if $E$ is a semi-Montel space. In this case $E$ respects all properties $\mathcal{P}\in\Pp$.
\end{corollary}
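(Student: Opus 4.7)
The plan is to prove the equivalence by exploiting Proposition \ref{p-Weak=+} (which gives $\mathcal{C}(E_w)=\mathcal{C}(E^+)$) to pass between the weak topology and the Bohr topology, together with the characterization of semi-reflexivity via relative weak compactness of bounded sets.

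For the ``if'' direction, if $E$ is semi-Montel, then Theorem \ref{t:Schur-Montel-respected} says that $E$ respects every $\mathcal{P}\in\Pp$, and in particular $E$ respects compactness, which is precisely the Glicksberg property. This also immediately gives the last assertion of the corollary.

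For the ``only if'' direction, assume $E$ has the Glicksberg property. Let $A\subseteq E$ be a bounded subset. Since $E$ is semi-reflexive, by Theorem 15.2.4 of \cite{NaB} as quoted above, $A$ is relatively weakly compact, i.e.\ its weak closure $K:=\overline{A}^{\,\tau_w}$ is compact in $E_w$. By Proposition \ref{p-Weak=+} applied to $\mathcal{P}=\mathcal{C}$, we have $\mathcal{C}(E_w)=\mathcal{C}(E^+)$, so $K$ is a compact subset of $E^+$. The Glicksberg property of $E$ then yields that $K$ is compact in $E$. Since $A\subseteq K$, $A$ is relatively compact in $E$. As $A$ was an arbitrary bounded subset, $E$ is semi-Montel.

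The main (trivial) obstacle is just making sure one uses the right closure: one should not try to compare the weak closure of $A$ with its $\tau^+$-closure, but merely observe that the weak closure is already compact in $E^+$ by Proposition \ref{p-Weak=+}, so the Glicksberg hypothesis applies to it directly. No further completeness or angelicity assumption is needed, and the final sentence of the corollary follows at once from Theorem \ref{t:Schur-Montel-respected} once semi-Montelness is established.
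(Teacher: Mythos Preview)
Your proof is correct and essentially identical to the paper's own argument: for the forward direction you take a bounded set, use semi-reflexivity to get weak compactness of its weak closure, pass to $E^+$, and apply the Glicksberg property; for the converse and the last assertion you invoke Theorem~\ref{t:Schur-Montel-respected}. The only cosmetic difference is that the paper cites the inclusion~(\ref{equ:weak-Bohr-property}) rather than the equality of Proposition~\ref{p-Weak=+} to pass from $E_w$-compactness to $E^+$-compactness, but only that inclusion is needed anyway.
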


\begin{proof}
Assume that $E$ has the  Glicksberg property. If $A$ is a  bounded  subset of $E$, then the weak closure $\overline{A}^{\, \tau_w}$ of $A$ is weakly compact, and hence $\overline{A}^{\, \tau_w}$ is compact also in $E$ by the Glicksberg property and (\ref{equ:weak-Bohr-property}). Thus $E$ is semi-Montel. The converse and the last assertions follow from Theorem \ref{t:Schur-Montel-respected}.
\end{proof}

Taking into account that any reflexive locally convex space is barrelled, Corollary \ref{c:Schur-semi-reflexive} immediately implies the main result of \cite{ReT}.
\begin{corollary}[\cite{ReT}] \label{c:Schur-reflexive}
Let $E$ be a real reflexive lcs. Then $E$ has the  Glicksberg property if and only if $E$ is a Montel space.
\end{corollary}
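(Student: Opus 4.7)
The plan is to derive this statement immediately from Corollary \ref{c:Schur-semi-reflexive} together with two standard facts about reflexive locally convex spaces: every reflexive lcs is both semi-reflexive and barrelled, and a Montel space is by definition a barrelled semi-Montel space. So the forward direction reduces to checking that semi-Montel plus barrelled equals Montel, and the reverse direction reduces to noting that a Montel space is in particular semi-Montel.

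More concretely, first I would assume that $E$ has the Glicksberg property. Since $E$ is reflexive, it is semi-reflexive, and so Corollary \ref{c:Schur-semi-reflexive} applies and yields that $E$ is semi-Montel. Since $E$ is reflexive it is also barrelled, and hence $E$ is a Montel space by definition. Conversely, if $E$ is a Montel space, then $E$ is in particular semi-Montel, and $E$ is semi-reflexive (being reflexive), so the backward direction of Corollary \ref{c:Schur-semi-reflexive} yields that $E$ has the Glicksberg property.

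There is essentially no obstacle here; the entire content is packaged inside Corollary \ref{c:Schur-semi-reflexive}, and the role of the hypothesis of reflexivity (as opposed to semi-reflexivity) is only to upgrade the conclusion from semi-Montel to Montel via barrelledness. The proof is a two-line deduction and requires no additional machinery beyond the definitions already recalled in the text preceding the statement.
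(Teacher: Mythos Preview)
Your proposal is correct and follows essentially the same route as the paper: the paper simply remarks that any reflexive lcs is barrelled and then says Corollary~\ref{c:Schur-semi-reflexive} immediately yields the result. Your write-up just unpacks this one-line deduction in slightly more detail.
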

An example of a semi-Montel but non-Montel space is given in Corollary \ref{c:L(X)-semi-Montel} below.

\begin{proposition} \label{p:bounded-in-Mc}
Let $X$ be a  Dieudonn\'{e} complete space and let $\KK$ be a $\tau_e$-closed subset of $M_c(X)$. Then the following assertions are equivalent:
\begin{enumerate}
\item[{\rm (i)}] $\KK$ is $\tau_e$-compact;
\item[{\rm (ii)}] $\KK$ is $\tau_e$-bounded;
\item[{\rm (iii)}] there is a compact subset $C$ of $X$ and $\e>0$ such that $\KK\subseteq [C;\e]^\circ$.
\end{enumerate}
In particular, the space $(M_c(X),\tau_e)$ is a semi-Montel space.
\end{proposition}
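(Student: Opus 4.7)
The plan is to prove the three equivalences cyclically, (i) $\Rightarrow$ (ii) $\Rightarrow$ (iii) $\Rightarrow$ (i), and then read off the semi-Montel conclusion at the end. The implication (i) $\Rightarrow$ (ii) is immediate, since every compact set in a locally convex space is bounded.

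For (iii) $\Rightarrow$ (i) I would first observe that $[C;\e]^\circ$ is the polar of a basic $\tau_k$-neighborhood of zero in $\CC(X)$, hence an equicontinuous subset of the dual $M_c(X)=\CC(X)'$, and so is $\sigma(M_c(X),C(X))$-compact by the Alaoglu--Bourbaki theorem. To upgrade this to $\tau_e$-compactness I would invoke the classical fact that on any equicontinuous subset of the dual $E'$, the weak-$\ast$ topology $\sigma(E',E)$ coincides with the topology $\tau_{pc}$ of uniform convergence on precompact subsets of $E$. Since Proposition \ref{p:Ascoli-Free-Ck} guarantees that equicontinuous pointwise bounded subsets of $C(X)$ are already $\tau_k$-relatively compact, we have $\tau_e\le\tau_{pc}$; combined with $\tau_e\ge\sigma$ this pinches $\tau_e=\sigma$ on $[C;\e]^\circ$. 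Therefore $[C;\e]^\circ$ is $\tau_e$-compact, and any $\tau_e$-closed $\KK\subseteq [C;\e]^\circ$ is $\tau_e$-compact.

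The main work is (ii) $\Rightarrow$ (iii), and my plan rests on two pillars. First, Corollary \ref{p:Ck-Mc-compatible} tells us that $\tau_e$ is compatible with the duality $(\CC(X),M_c(X))$, so Mackey's theorem identifies $\tau_e$-boundedness with $\sigma(M_c(X),C(X))$-boundedness. Second, I claim that Dieudonn\'e completeness of $X$ forces $X$ to be a $\mu$-space: for any functionally bounded $A\subseteq X$ and any $f\in C(X)$, the image $f(A)\subseteq\RR$ is bounded, hence can be covered by finitely many open intervals of length $2$, which yields a finite cover of $A$ by the basic entourage $U_f=\{(x,y):|f(x)-f(y)|<1\}$; the same argument handles finite intersections and shows that $A$ is totally bounded in the finest (Dieudonn\'e) uniformity on $X$. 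Since $X$ is complete in that uniformity, $\overline{A}$ is totally bounded and complete, hence compact. Now the Nachbin--Shirota theorem applies to give that $\CC(X)$ is barrelled, and consequently every $\sigma$-bounded subset of $M_c(X)$ is equicontinuous, i.e.\ contained in the polar of some $\tau_k$-neighborhood of zero in $\CC(X)$. Choosing such a neighborhood of the form $[C;\e]$ from the standard base yields $\KK\subseteq [C;\e]^\circ$, as required.

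The final semi-Montel assertion then falls out immediately: if $B\subseteq M_c(X)$ is $\tau_e$-bounded then its $\tau_e$-closure is $\tau_e$-bounded and $\tau_e$-closed, so the cycle places it inside some $[C;\e]^\circ$ and makes it $\tau_e$-compact. The principal obstacle is the chain of reductions powering (ii) $\Rightarrow$ (iii); in particular, the implication ``Dieudonn\'e complete $\Rightarrow$ $\mu$-space,'' together with its bridge to Nachbin--Shirota barrelledness, is where the hypothesis on $X$ is genuinely used, and care is needed to verify total boundedness for finite intersections of the entourages $U_f$ rather than just individual ones.
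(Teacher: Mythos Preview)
Your proposal is correct and follows essentially the same route as the paper: the same cyclic scheme (i)$\Rightarrow$(ii)$\Rightarrow$(iii)$\Rightarrow$(i), with (ii)$\Rightarrow$(iii) via Nachbin--Shirota barrelledness of $\CC(X)$ (using that Dieudonn\'e complete spaces are $\mu$-spaces) together with the compatibility from Corollary~\ref{p:Ck-Mc-compatible}, and (iii)$\Rightarrow$(i) via Alaoglu plus the coincidence of $\sigma$ and $\tau_{pc}$ on equicontinuous sets combined with $\tau_e\le\tau_{pc}$. The only cosmetic difference is that the paper simply cites the implication ``Dieudonn\'e complete $\Rightarrow$ $\mu$-space'' as known (e.g.\ \cite[Proposition~1.19]{Aus}) and reaches $\tau_e\le\tau_{pc}$ through Proposition~\ref{p:characterization-Ascoli}(i), whereas you sketch the total-boundedness argument directly and invoke Proposition~\ref{p:Ascoli-Free-Ck}.
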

\begin{proof}
(i)$\Rightarrow$(ii) is clear. Let us prove that (ii)$\Rightarrow$(iii). Since $X$ being a  Dieudonn\'{e} complete  space is a $\mu$-space, $\CC(X)$ is barrelled by the Nachbin--Shirota theorem. This fact and Corollary \ref{p:Ck-Mc-compatible} imply that $\KK$ is equicontinuous. So there is a compact subset $C$ of $X$ and $\e>0$ such that $\KK\subseteq [C;\e]^\circ$. To prove (iii)$\Rightarrow$(i) we note first that $[C;\e]^\circ$ is equicontinuous and $\sigma\big(M_c(X),C(X)\big)$-compact by the Alaoglu theorem. Therefore $[C;\e]^\circ$ is compact in the precompact-open topology $\tau_{pc}$ on $M_c(X)$ by Proposition 3.9.8 of \cite{horvath}.  By Proposition \ref{p:characterization-Ascoli}, we have $\tau_e\leq \tau_k^M \leq \tau_{pc}$. Hence $[C;\e]^\circ$ is $\tau_e$-compact. Thus $\KK$ being closed is also $\tau_e$-compact.
\end{proof}

Theorem \ref{t:Schur-Montel-respected} and Proposition \ref{p:bounded-in-Mc} imply
\begin{corollary} \label{c:Mc-respect}
If $X$ is a  Dieudonn\'{e} complete space, then $(M_c(X),\tau_e)$ respects all properties $\mathcal{P}\in\Pp$.
\end{corollary}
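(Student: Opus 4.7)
The plan is to observe that this corollary is an immediate composition of the two results just established. First I would invoke Proposition \ref{p:bounded-in-Mc}, whose concluding statement is precisely that $(M_c(X),\tau_e)$ is a semi-Montel space whenever $X$ is Dieudonn\'{e} complete. This gives me the structural hypothesis needed to apply the respecting theorem.

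Next I would apply Theorem \ref{t:Schur-Montel-respected} to the real locally convex space $E = (M_c(X),\tau_e)$. That theorem asserts that every real semi-Montel space respects all properties $\mathcal{P} \in \Pp$, so the conclusion follows directly. No further argument is required: the work has already been done in establishing semi-Montelness of $(M_c(X),\tau_e)$ via the chain $\tau_e \leq \tau_k^M \leq \tau_{pc}$ together with the Alaoglu theorem, and in establishing that semi-Montel spaces automatically have $\mathcal{P}(E) = \mathcal{P}(E^+)$ for all $\mathcal{P} \in \Pp$ via the bound $\mathcal{FB}(E^+) \subseteq \Bo(E)$ from Proposition \ref{p:FB-weakly-bound} and relative compactness of bounded sets.

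Since neither step requires any hypothesis beyond Dieudonn\'{e} completeness of $X$, there is no real obstacle here; the only mild subtlety is to make sure $(M_c(X),\tau_e)$ is indeed a real lcs so that Theorem \ref{t:Schur-Montel-respected} (which is stated for real lcs) applies, but this is immediate from the definition of $M_c(X)$ as real regular Borel measures. Thus the proof reduces to a one-line citation of the two preceding results.
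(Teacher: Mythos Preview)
Your proposal is correct and matches the paper's approach exactly: the paper simply states that Theorem \ref{t:Schur-Montel-respected} and Proposition \ref{p:bounded-in-Mc} imply the corollary, which is precisely the two-step citation you describe.
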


Below we describe bounded subsets of $L(X)$, this result  generalizes Lemma 6.3 of \cite{GM}. For  $\chi = a_1 x_1+\cdots +a_n x_n\in L(X)$ with distinct $x_1,\dots, x_n\in X$ and  nonzero $a_1,\dots,a_n\in\IR$, we set
\[
\| \chi\|:=|a_1|+\cdots +|a_n|, \; \mbox{ and } \; \mathrm{supp}(\chi):=\{ x_1,\dots, x_n\},
\]
and recall that
\[
f(\chi)=a_1 f(x_1)+\cdots +a_n f(x_n), \; \mbox{ for every } f\in C(X)=L(X)'.
\]
For $\{ 0\}\not= A\subseteq L(X)$, set $\mathrm{supp}(A):=\bigcup_{\chi\in A} \mathrm{supp}(\chi)$.

\begin{proposition} \label{p:bounded-in-L(X)}
A nonzero subset $A$ of $L(X)$ is bounded if and only if $\supp(A)$ has compact closure in the Dieudonn\'{e} completion $\mu X$ of $X$ and $C_A:=\sup\{ \| \chi\|: \chi\in A\}$ is finite.
\end{proposition}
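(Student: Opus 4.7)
The plan is to transport the question to the completion of $L(X)$ and invoke the concrete description of bounded sets given in Proposition \ref{p:bounded-in-Mc}. By Theorem \ref{t:Free-complete-L}, $L(X)$ sits as a topological subspace of $\overline{L(X)} = (M_c(\mu X), \tau_e)$, so $A$ is bounded in $L(X)$ if and only if it is bounded in $(M_c(\mu X), \tau_e)$. Since $\mu X$ is Dieudonn\'{e} complete, Proposition \ref{p:bounded-in-Mc} (applied to the $\tau_e$-closure of $A$) then yields that this is equivalent to the existence of a compact $C \subseteq \mu X$ and $\e > 0$ with $A \subseteq [C;\e]^\circ$.

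For the sufficiency, I would take $C := \overline{\supp(A)}^{\, \mu X}$, which is compact by hypothesis. Then for any $\chi = \sum_{i=1}^n a_i x_i \in A$ and any $f \in [C; 1/C_A]$, the points $x_i$ lie in $C$, so
\[
|f(\chi)| \le \sum_{i=1}^n |a_i| |f(x_i)| \le \|\chi\|/C_A \le 1,
\]
showing $A \subseteq [C; 1/C_A]^\circ$ and hence $A$ is bounded.

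For the necessity, fix $C$ and $\e$ with $A \subseteq [C;\e]^\circ$ and verify the two conclusions by testing against suitable members of $[C;\e]$. To prove $\supp(A) \subseteq C$ (whence $\overline{\supp(A)}^{\, \mu X}$ is compact), assume toward a contradiction that some $\chi = \sum a_i x_i \in A$ has a support point $x_k \notin C$; using that $\mu X$ is Tychonoff and that $C \cup (\supp(\chi) \setminus \{x_k\})$ is compact and does not contain $x_k$, I would choose $g \in C(\mu X)$ vanishing on this set with $g(x_k) = M$ arbitrarily large, so $g \in [C;\e]$ while $g(\chi) = a_k M$, contradicting $\chi \in [C;\e]^\circ$ since $a_k \neq 0$. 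To bound $\|\chi\|$ for $\chi = \sum a_i x_i \in A$, I would exploit that compact subsets of Tychonoff spaces are $C^*$-embedded to construct $f \in C(\mu X)$ with $|f| \le \e$ on $C$ and $f(x_i) = \e \operatorname{sgn}(a_i)$ (prescribe these values on the compact Hausdorff space $C$ via Tietze, then extend to $\mu X$); then $f \in [C;\e]$ and $f(\chi) = \e\|\chi\|$, so $|f(\chi)| \le 1$ gives $\|\chi\| \le 1/\e$ and hence $C_A \le 1/\e$. The main obstacle is this necessity direction, specifically producing test functions in $C(\mu X)$ with the right uniform control on $C$; this is exactly where Theorem \ref{t:Free-complete-L} earns its keep, by allowing us to work inside the enlarged Tychonoff space $\mu X$ where both pointwise separation from compact sets and $C^*$-extension of prescribed finite data are available.
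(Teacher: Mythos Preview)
Your approach is essentially the same as the paper's: both embed $L(X)$ into $(M_c(\mu X),\tau_e)$, reduce boundedness to containment in some $[C;\e]^\circ$, and then unpack this condition for finitely supported measures---you streamline the first reduction by citing Proposition~\ref{p:bounded-in-Mc} directly (whereas the paper re-derives it via Nachbin--Shirota barrelledness and equicontinuity), and you spell out the ``easy to see by regularity'' step that the paper leaves implicit. One cosmetic slip: prescribing $f(x_i)=\e\operatorname{sgn}(a_i)$ gives $|f(x_i)|=\e$, not $<\e$, so $f\notin [C;\e]$ as defined; replace $\e$ by any $\e'<\e$ and pass to the supremum (or observe that $[C;\e]^\circ$ coincides with the polar of its closure, since $[C;\e]$ is absolutely convex) to recover $\|\chi\|\le 1/\e$.
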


\begin{proof}
Observe that a subset $B$ of an lcs $E$ is bounded if and only if its closure $\overline{B}$ in the completion $\overline{E}$ of $E$ is bounded. Now assume that $A$ is bounded. By Theorem \ref{t:Free-complete-L}, we have $\overline{L(X)}=(M_c(\mu X),\tau_e)$ and, by Corollary \ref{p:Ck-Mc-compatible}, the topology $\tau_e$ is compatible with the duality $(\CC(\mu X),M_c(\mu X))$. As $\mu X$ is a $\mu$-space, the Nachbin--Shirota theorem implies that $\CC(\mu X)$ is barrelled. Therefore $A$ is a bounded subset of $L(X)$ if and only if its completion $\overline{A}$ in $(M_c(\mu X),\tau_e)$ is equicontinuous and hence if and only if there is a compact subset $K$ of $\mu X$ and $\e>0$ such that $A\subseteq [K;\e]^\circ \cap L(X)$. By the regularity of $\mu X$ it is easy to see that
\[
\chi = a_1 x_1+\cdots +a_n x_n \in [K;\e]^\circ \cap L(X),
\]
where $x_1,\dots, x_n\in X$ are distinct and $a_1,\dots,a_n$ are nonzero, if and only if $x_1,\dots,x_n\in K$ and $\| \chi\|=|a_1|+\cdots +|a_n|\leq 1/\e$. Therefore, if $A$ is bounded, then $\supp(A)\subseteq K$ and $C_A <1/\e$.

Conversely, let $\overline{\supp(A)}$ be compact in $\mu X$ and $C_A<\infty$. Set $B:= \big[\overline{\supp(A)}; 1/C_A \big]^\circ$. Then $B$ is $\sigma\big(M_c(\mu X),C(\mu X)\big)$-compact by the Alaoglu theorem. Therefore $B$ is compact in the precompact-open topology $\tau_{pc}$ on $M_c(\mu X)$ by Proposition 3.9.8 of \cite{horvath}. Since $\tau_e\leq \tau_k^M \leq \tau_{pc}$ by Proposition \ref{p:characterization-Ascoli}, we obtain that $B$ is a $\tau_e$-compact subset of $M_c(\mu X)$. As $A\subseteq B\cap L(X)$, the above observation implies that $A$ is a bounded subset of $L(X)$.
\end{proof}





\begin{corollary} \label{c:L(X)-semi-Montel}
Let $X$ be a Dieudonn\'{e} complete space whose compact subsets are finite. Then $L(X)$ is a complete semi-Montel space. If, in addition, $X$ is non-discrete, then $L(X)$ is not Montel.
\end{corollary}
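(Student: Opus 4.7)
The plan is to prove the three assertions in turn. First, completeness of $L(X)$ is immediate from Theorem \ref{t:Free-complete-L}: the hypotheses say precisely that $X$ is Dieudonn\'{e} complete and has no infinite compact subsets.

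Second, to establish that $L(X)$ is semi-Montel I would pick an arbitrary bounded subset $A \subseteq L(X)$ and apply Proposition \ref{p:bounded-in-L(X)} with $\mu X = X$. This yields that the closure of $\supp(A)$ in $X$ is compact, and therefore \emph{finite} by hypothesis. Calling that finite set $F$, the set $A$ lies inside the finite-dimensional subspace $\spn(F) \subseteq L(X)$, which carries the unique Hausdorff vector topology. The usual Heine--Borel theorem then gives relative compactness of $A$ in $\spn(F)$, and hence in $L(X)$.

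For the last assertion (non-Montel when $X$ is non-discrete), since semi-Montel is already in hand, it suffices to show that $L(X)$ is not barrelled; that is, to exhibit a $\sigma(C(X),L(X))$-bounded subset of $C(X)=L(X)'$ which fails to be equicontinuous on $L(X)$. My plan is as follows: fix a non-isolated point $x_0 \in X$ and, for each open neighborhood $V$ of $x_0$, choose a point $x_V \in V \setminus \{x_0\}$ together with, using Tychonoff separation, a function $f_V \in C(X)$ satisfying $f_V(x_0) = 0$, $f_V(x_V) = 1$, and $|f_V| \leq 1$ on $X$. Uniform boundedness by $1$ makes the family $\{f_V\}_V$ pointwise bounded on $X$, hence $\sigma(C(X), L(X))$-bounded. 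On the other hand, the continuous embedding $X \hookrightarrow L(X)$ sends the net $(x_V)$ to $x_0$, so $x_V - x_0 \to 0$ in $L(X)$; if $\{f_V\}_V$ were equicontinuous, the standard ``equicontinuity plus convergence to $0$'' argument would force $f_V(x_V - x_0) \to 0$ along the net, contradicting $f_V(x_V - x_0) = 1$ for every $V$.

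The main obstacle, and the only step where the specific hypotheses on $X$ are used in a nontrivial way, is the non-barrelled construction: both the Tychonoff axiom (for extracting the separating functions $f_V$) and the existence of a non-isolated point (for the net $x_V \to x_0$ witnessing the failure of equicontinuity) are essential. The semi-Montel part, by contrast, collapses to a finite-dimensional Heine--Borel argument once Proposition \ref{p:bounded-in-L(X)} has been invoked.
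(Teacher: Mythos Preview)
Your proof is correct. The completeness and semi-Montel parts coincide with the paper's argument: both invoke Theorem~\ref{t:Free-complete-L} for completeness and Proposition~\ref{p:bounded-in-L(X)} to place any bounded set inside a finite-dimensional subspace, then finish with Heine--Borel.

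The only substantive difference is in the non-Montel step. The paper simply cites an external result (Theorem~6.4 of \cite{GM}) asserting that $L(X)$ is not barrelled when $X$ is non-discrete, whereas you supply a direct, self-contained construction: a uniformly bounded family $\{f_V\}\subseteq C(X)=L(X)'$ indexed by neighborhoods of a non-isolated point, which is $\sigma(C(X),L(X))$-bounded but fails equicontinuity because $f_V(x_V-x_0)=1$ while $x_V-x_0\to 0$ in $L(X)$. Your argument is more elementary and keeps the proof internal to the paper, at the cost of a few extra lines; the paper's citation is shorter but outsources the content. Both routes are valid and arrive at the same conclusion that $L(X)$ fails to be barrelled, hence is not Montel.
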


\begin{proof}
By Proposition \ref{p:bounded-in-L(X)}, every bounded subset $A$ of $L(X)$ is a bounded subset of a finite-dimensional subspace of $L(X)$. Therefore $\overline{A}$ is compact and hence $L(X)$ is a semi-Montel space. The space $L(X)$ is complete by Theorem \ref{t:Free-complete-L}. If additionally $X$ is not discrete, then $L(X)$ is not barrelled  by Theorem 6.4 of \cite{GM}. 
Thus $L(X)$ is not a Montel space.
\end{proof}

Now we prove the main result of this section.

\begin{proof}[Proof of Theorem \ref{t:L(X)-Schur}]
By Theorem \ref{t:Free-complete-L}, the space  $L(X)$ embeds into $(M_c(\mu X),\tau_e)$. Now Proposition \ref{pSubSchur} and Corollary \ref{c:Mc-respect} imply that $L(X)$ respects all properties $\mathcal{P}\in\Pp_0$.

Assume in addition that $L(X)$ is complete. Then $X$  is Dieudonn\'{e} complete and does not have infinite compact subsets by Theorem \ref{t:Free-complete-L}.
Hence $L(X)$ is a  semi-Montel space by Corollary \ref{c:L(X)-semi-Montel}. Thus $L(X)$ respects also functional boundedness by Theorem \ref{t:Schur-Montel-respected}.
\end{proof}


\smallskip
\begin{proof}[Proof of Corollary \ref{c:L(X)-quotients}]
By the universal property of the free lcs $L(E)$, the identity map $id: E\to E$ extends to a continuous linear map $\overline{id}$ from $L(E)$ onto $E$. Since $E$ is a subspace of $L(E)$ it follows that $\overline{id}$ is a quotient map. It remains to note that $L(E)$ weakly respects all properties $\mathcal{P}\in \Pp_0$ by Theorem \ref{t:L(X)-Schur}.
\end{proof}

Following \cite{Mar}, an abelian topological group $A(X)$ is called the {\em  free abelian topological  group} over  a Tychonoff space $X$ if there is a continuous map  $i: X\to A(X)$ such that $i(X)$ algebraically generates $A(X)$, and if $f: X\to G$ is a continuous map to an abelian topological  group $G$, then there exists a continuous homomorphism ${\bar f}: A(X) \to G$  such that $f={\bar f} \circ i$. The free abelian topological  group $A(X)$ is always exists and is essentially unique. The identity map $id_X :X\to X$ extends to a canonical homomorphism $id_{A(X)}: A(X)\to L(X)$. Note that $id_{A(X)}$ is an embedding of topological groups, see \cite{Tkac,Usp2}.

It is known (see \cite{GalHer}) that the free abelian topological group $A(X)$  over a Tychonoff space $X$ has the Glicksberg property. The next corollary generalizes this result.
\begin{corollary} \label{c:A(X)-Schur}
Let $X$ be a Tychonoff space. Then the free abelian topological group  $A(X)$ over $X$ is locally quasi-convex and respects all properties $\mathcal{P}\in\Pp_0$.
\end{corollary}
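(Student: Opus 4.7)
The plan is to reduce both parts of the corollary to facts already established for the free locally convex space $L(X)$, via the topological embedding $id_{A(X)}\colon A(X)\hookrightarrow L(X)$ recalled just above the statement.

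First I would address local quasi-convexity. Since $L(X)$ is a real locally convex space, it is locally quasi-convex as an abelian topological group (as recorded in Section \ref{sec:prelim}, following \cite{Ban}). The only auxiliary fact needed is that any subgroup $H$ of a locally quasi-convex abelian group $(G,\tau)$ is itself locally quasi-convex. I would verify this directly: if $U\subseteq G$ is a quasi-convex zero-neighborhood and $h\in H\setminus(U\cap H)$, then $h\in G\setminus U$, so there exists $\chi\in\widehat{G}$ with $\chi(h)\notin\Ss_+$ and $\chi(U)\subseteq\Ss_+$; the restriction $\chi|_H\in\widehat{H}$ then witnesses quasi-convexity of $U\cap H$ in $H$, and the sets $U\cap H$ (for $U$ ranging over a quasi-convex base of $L(X)$) form a base of quasi-convex zero-neighborhoods in $A(X)$.

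Next I would handle the respecting properties. Being a Hausdorff locally convex space, $L(X)$ is $MAP$, and so is its subgroup $A(X)$ because continuous characters of $L(X)$ restrict to continuous characters of $A(X)$ and still separate the points of $A(X)$. Theorem \ref{t:L(X)-Schur} already tells us that $L(X)$ respects every $\mathcal{P}\in\Pp_0$. Since $A(X)$ is a topological subgroup of $L(X)$, Proposition \ref{pSubSchur} immediately yields that $A(X)$ respects every $\mathcal{P}\in\Pp_0$ as well.

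I do not anticipate a real obstacle here: the argument just packages Theorem \ref{t:L(X)-Schur}, the embedding $A(X)\hookrightarrow L(X)$, and the subgroup heredity of Proposition \ref{pSubSchur}. The only delicate micro-step is the elementary verification that subgroups of locally quasi-convex groups are locally quasi-convex, and this is dispatched by the character-restriction argument above, which does not require $A(X)$ to be dually embedded in $L(X)$.
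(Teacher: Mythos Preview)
Your proposal is correct and follows essentially the same approach as the paper: both arguments use the embedding $A(X)\hookrightarrow L(X)$, deduce local quasi-convexity of $A(X)$ from that of $L(X)$ via subgroup heredity, and then combine Theorem~\ref{t:L(X)-Schur} with Proposition~\ref{pSubSchur}. The only difference is that you spell out the elementary verification that subgroups of locally quasi-convex groups are locally quasi-convex, which the paper takes for granted.
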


\begin{proof}
Since $A(X)$ is a subgroup of $L(X)$, $A(X)$ is locally quasi-convex. The group $A(X)$ respects all properties $\mathcal{P}\in\Pp_0$ by Proposition \ref{pSubSchur} and  Theorem \ref{t:L(X)-Schur}.
\end{proof}

\begin{proof}[Proof of Corollary \ref{c:A(X)-quotients}]
Note that $G$ is a Tychonoff space. So, by the universal property of $A(G)$, the identity map $id: G\to G$ extends to a continuous homomorphism $\overline{id}$ from $A(G)$ onto $G$. Clearly, $\overline{id}$ is a quotient map. Now Corollary \ref{c:A(X)-Schur} finishes the proof.
\end{proof}

We do not know whether $L(X)$ and $A(X)$ respect also functional boundedness for every Tychonoff space $X$.

Below we give an application of the obtained results.
Let $X$ be a  Tychonoff space and let $G$ be an abelian locally compact group. Denote by $\CC(X,G)$ the space $C(X,G)$ endowed with the compact-open topology. Then $\CC(X,G)$ is an abelian topological group under the pointwise addition. Pol and Smentek \cite{PolSmen} proved that the group $\CC(X,D)$ is reflexive for every  finitely generated discrete abelian group $D$ and each zero-dimensional realcompact $k$-space $X$. Au\ss enhofer proved in \cite[Theorem~14.9]{Aus} that the group $\CC(X,\Ss)$ is reflexive for every hemicompact $k$-space $X$.
Let $\mathbf{s}=\{ e_n\}_{n\in\NN} \cup \{e_0\}$ be a one-to-one convergent sequence with the limit point $e_0$.
It is easy to see that the map
\[
F:\CC(\mathbf{s},G)\to G\times \Ff_0(G), \quad F(h):=\Big( h(e_0); \big( h(e_n)-h(e_0) \big)_{n\in\NN} \Big), \; h\in \CC(\mathbf{s},G),
\]
is a topological isomorphism. Since the group $\Ff_0(G)$ is reflexive by \cite{Ga8}, we obtain that the group $\CC(\mathbf{s},G)$ is reflexive for every abelian locally compact group $G$. These results suggest the following question:
{\em For an abelian locally compact group  $G$, characterize Tychonoff spaces $X$ such that  the group $\CC(X,G)$ is reflexive.}
For the most important case $G=\IR$ we obtain a partial answer to this question.
\begin{proposition} \label{p:reflexive-C(X,G)}
If $X$ is a Dieudonn\'{e} complete Ascoli space, then the space $\CC(X)$ is a reflexive group.
\end{proposition}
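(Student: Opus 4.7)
The plan is to compute the Pontryagin bidual of $\CC(X)$ directly by applying Fact \ref{f:dual-E} twice. Recall that this fact identifies the Pontryagin dual of a real lcs $E$ with $(E',\tau_k)$ via the exponential map $\psi$.

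First, I compute $\CC(X)^\wedge$. By Fact \ref{f:dual-E}(ii), $\CC(X)^\wedge$ is topologically isomorphic to $(M_c(X),\tau_k^M)$ via $\psi$. The Ascoli hypothesis together with Proposition \ref{p:characterization-Ascoli}(ii) gives $\tau_k^M = \tau_e$, so $\CC(X)^\wedge \cong (M_c(X),\tau_e)$.

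Second, I compute $\CC(X)^{\wedge\wedge}$ by applying Fact \ref{f:dual-E} to the lcs $(M_c(X),\tau_e)$. Using Corollary \ref{p:Ck-Mc-compatible} (valid since $X$ is Dieudonn\'{e} complete) to identify $(M_c(X),\tau_e)' = C(X)$, one obtains that $\CC(X)^{\wedge\wedge}$ is topologically isomorphic to $(C(X),\sigma)$, where $\sigma$ is the topology of uniform convergence on $\tau_e$-compact subsets of $M_c(X)$. The decisive step is to show $\sigma = \tau_k$. By Proposition \ref{p:bounded-in-Mc}, every $\tau_e$-compact subset of $M_c(X)$ is contained in some polar $[C;\e]^\circ$ with $C\subseteq X$ compact and $\e>0$. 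Since $[C;\e]$ is absolutely convex and $\tau_k$-closed in $\CC(X)$, the Bipolar theorem yields $([C;\e]^\circ)^\circ = [C;\e]$, so the topology of uniform convergence on the sets $[C;\e]^\circ$ admits the sets $[C;\e]$ as its standard $1$-neighborhood base at zero. Hence $\sigma = \tau_k$, and $\CC(X)^{\wedge\wedge} \cong \CC(X)$ as topological groups.

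Finally, I verify that under these identifications the canonical homomorphism $\alpha_{\CC(X)}$ corresponds to the identity. Tracing through the two applications of $\psi$: for $f\in C(X)$ and $\nu \in M_c(X)$, the character $\alpha_{\CC(X)}(f)$ evaluated at $\psi(\nu) \in \CC(X)^\wedge$ equals $\psi(\nu)(f) = e^{2\pi i \nu(f)}$, which is precisely the character on $(M_c(X),\tau_e)$ that $f$ determines via the second application of $\psi$. Because each $\psi$-map is a topological isomorphism, $\alpha_{\CC(X)}$ is as well, and the proposition follows.

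I expect the principal obstacle to be the identification $\sigma=\tau_k$, as it requires combining the equicontinuity description of $\tau_e$-compact subsets of $M_c(X)$ from Proposition \ref{p:bounded-in-Mc} (which implicitly uses the Nachbin--Shirota theorem, and hence the Dieudonn\'{e}-complete hypothesis) with the Bipolar theorem in order to recover the original compact-open topology on $C(X)$. The Ascoli hypothesis, in contrast, enters solely at the very first step through Proposition \ref{p:characterization-Ascoli}.
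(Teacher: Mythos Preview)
Your argument is correct and shares the same opening moves as the paper: both identify $\CC(X)^\wedge$ with $(M_c(X),\tau_e)$ via Fact~\ref{f:dual-E} and Proposition~\ref{p:characterization-Ascoli}, and both invoke Corollary~\ref{p:Ck-Mc-compatible} to get the algebraic bidual right. The execution of the second half differs, however. The paper does not compute the bidual topology explicitly; instead it verifies that $\alpha_{\CC(X)}$ is a topological isomorphism piece by piece, citing Au{\ss}enhofer's criteria from \cite{Aus}: local quasi-convexity of $\CC(X)$ gives openness and injectivity (\cite[6.10]{Aus}), and continuity is reduced to showing every compact subset of $\CC(X)^\wedge$ is equicontinuous (\cite[5.10]{Aus}), which follows from Nachbin--Shirota barrelledness of $\CC(X)$. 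You bypass these external references by computing the topology on the bidual directly, using Proposition~\ref{p:bounded-in-Mc} to see that the $\tau_e$-compact sets are cofinally the polars $[C;\e]^\circ$, and then recovering $\tau_k$ via the Bipolar theorem. Your route is more self-contained within the paper's framework, at the price of having to trace the canonical map through the two $\psi$-identifications; the paper's route is shorter but leans on \cite{Aus}.

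One small slip: as defined in the paper, $[C;\e]=\{f:|f(x)|<\e\ \forall x\in C\}$ is open, not $\tau_k$-closed, so the bipolar is its closure $\{f:|f(x)|\le\e\ \forall x\in C\}$. This does not affect your conclusion, since the closure is still a $\tau_k$-neighborhood of zero and the family of such closures is a neighborhood base.
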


\begin{proof}
Let $\alpha: \CC(X)\to\CC(X)^{\wedge\wedge}$ be the canonical map.
Since $X$ is Ascoli, Fact \ref{f:dual-E} and Proposition  \ref{p:characterization-Ascoli} imply that $(M_c(X),\tau_e)$ is topologically isomorphic to $\CC(X)^\wedge$ under the canonical map $\psi(\chi):= e^{2\pi i \chi}, \chi\in M_c(X)$. Applying Fact \ref{f:dual-E} and Corollary \ref{p:Ck-Mc-compatible} we obtain that $\alpha$ is surjective. As $\CC(X)$ is a locally quasi-convex group by Proposition 2.4 of \cite{Ban}, $\alpha$ is open and injective by \cite[Proposition~6.10]{Aus}. To show that $\alpha$ also is continuous it is sufficient to prove that every compact subset of $\CC(X)^\wedge$ is equicontinuous, see  \cite[Proposition~5.10]{Aus}. Recall that every Dieudonn\'{e} complete space is a $\mu$-space (see \cite[Proposition~1.19]{Aus}), and hence $\CC(X)$ is barrelled by the Nachbin--Shirota theorem. As,  by Corollary \ref{p:Ck-Mc-compatible}, the topology $\tau_e$ is compatible with the weak-$\ast$ topology on $M_c(X)$, we obtain that all compact subset of $(M_c(X),\tau_e)$ are equicontinuous. Hence all compact subsets of $\CC(X)^\wedge$ are equicontinuous by (vi) of Proposition \ref{p:weak*-space-group}. Thus $\alpha$ is a topological isomorphism.
\end{proof}
In particular, $\CC(X)$ is a reflexive group for every separable metrizable space $X$. It is worth mentioning that $\CC(X)$ is a reflexive space if and only if $X$ is discrete, see Theorem 11.7.7 of \cite{Jar}.

We end this section with the following questions.
\begin{question}
Characterize Tychonoff spaces $X$ for which $C_p(X)$ and $L(X)$ are reflexive groups. Does there exist a non-discrete $X$ such that $C_p(X)$ or $L(X)$ is a reflexive group?
\end{question}





\section{$\mathcal{P}$-barreledness, reflexivity and respecting properties} \label{sec:3} 


Let $E$ be a locally convex space.
It is well-known that $E$ is {\em barrelled} if and only if every $\sigma(E',E)$-bounded subset of $E'$ is equicontinuous.
Recall that $E$ is called {\em $c_0$-barrelled} if every $\sigma(E',E)$-null sequence is equicontinuous. Analogously, if $\mathcal{P}$ is a (topological) property, we shall say that  $E$ is a {\em $\mathcal{P}$-barreled space} if every $A\in \mathcal{P}\big(E',\sigma(E',E)\big)$ is equicontinuous.

Following \cite{CMPT}, a $MAP$ abelian group $G$ is called {\em $g$-barrelled} if any $\sigma(\widehat{G},G)$-compact subset of  $\widehat{G}$ is equicontinuous. Every real barrelled lcs $E$ is a $g$-barrelled group, but the converse does not hold in general by \cite{CMPT} (see also Example \ref{exa:g-barreled-nobarreled} below). Analogously, $G$  is {\em sequentially barrelled} or {\em $c_0$-barrelled} if  any $\sigma(\widehat{G},G)$-convergent sequence of  $\widehat{G}$ is equicontinuous, see \cite{MT}. More generally, for a property $\mathcal{P}$, we shall say that a $MAP$ abelian group  $G$ is {\em $\mathcal{P}$-barrelled} if every $A\in \mathcal{P}\big(\widehat{G},\sigma(\widehat{G},G)\big)$ is equicontinuous. Clearly, every $g$-barrelled group is also $c_0$-barrelled. In the next proposition item (i) extends Proposition 1.12 of \cite{CMPT} and explains our use of the notion ``$c_0$-barrelled group'' also for $c_0$-barrelled spaces.
\begin{proposition} \label{p:space-group-barrelled}
Let $E$ be a real locally convex space and let $\mathcal{P}\in\Pp_0$. Then:
\begin{enumerate}
\item[{\rm (i)}] $E$ is a $\mathcal{P}$-barreled space if and only if $E$ is a $\mathcal{P}$-barrelled group;
\item[{\rm (ii)}] if $E$ is a barrelled space, then $E$ is a $\mathcal{P}$-barreled group.
\end{enumerate}
\end{proposition}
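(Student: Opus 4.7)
The plan is to deduce both assertions directly from Proposition \ref{p:weak*-space-group}, which provides a translation dictionary between the dual space $E'$ with its weak-$\ast$ topology and the dual group $\widehat{E}$ with the topology $\sigma(\widehat{E},E)$, via the canonical isomorphism $\psi:E'\to\widehat{E}$, $\psi(\chi)=e^{2\pi i\chi}$.

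For part (i), I would argue as follows. By Proposition \ref{p:weak*-space-group}(ii), for every $\mathcal{P}\in\Pp_0$, a subset $A\subseteq E'$ satisfies $A\in\mathcal{P}\bigl(E',\sigma(E',E)\bigr)$ if and only if $\psi(A)\in\mathcal{P}\bigl(\widehat{E},\sigma(\widehat{E},E)\bigr)$. Moreover, Proposition \ref{p:weak*-space-group}(vi) tells us that $A$ is equicontinuous (as a subset of $E'$) if and only if $\psi(A)$ is equicontinuous (as a subset of $\widehat{E}$). Combining these two equivalences, the assertion ``every $A\in\mathcal{P}\bigl(E',\sigma(E',E)\bigr)$ is equicontinuous'' is equivalent to ``every $B\in\mathcal{P}\bigl(\widehat{E},\sigma(\widehat{E},E)\bigr)$ is equicontinuous,'' which is exactly the content of (i).

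For part (ii), I would observe that for each $\mathcal{P}\in\Pp_0$, every $A\in\mathcal{P}\bigl(E',\sigma(E',E)\bigr)$ is $\sigma(E',E)$-bounded: convergent sequences, compact, sequentially compact, countably compact and pseudocompact subsets of a locally convex (or more generally, of a topological vector) space are all bounded. If $E$ is a barrelled locally convex space, then every $\sigma(E',E)$-bounded subset of $E'$ is equicontinuous, so in particular every such $A$ is equicontinuous, i.e., $E$ is a $\mathcal{P}$-barrelled space. Applying part (i) then yields that $E$ is a $\mathcal{P}$-barrelled group.

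There is essentially no serious obstacle here, since all the substantive work has already been packaged into Proposition \ref{p:weak*-space-group}. The only mild point of care is recording the fact that all properties in $\Pp_0$ imply $\sigma(E',E)$-boundedness, which is standard for locally convex spaces.
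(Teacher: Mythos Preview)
Your proof is correct and follows essentially the same approach as the paper: part (i) is identical, using Proposition~\ref{p:weak*-space-group}(ii) and (vi) to transport the property $\mathcal{P}$ and equicontinuity across the canonical isomorphism $\psi$. For part (ii) you argue directly that every $\mathcal{P}\in\Pp_0$ implies $\sigma(E',E)$-boundedness, whereas the paper routes this through the observation that such sets are functionally bounded and then invokes Proposition~\ref{p:FB-weakly-bound}; both justifications are straightforward and lead to the same conclusion.
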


\begin{proof}
(i) Let $E$ be a $\mathcal{P}$-barreled space and  $A\in \mathcal{P}\big(\widehat{E},\sigma(\widehat{E},E)\big)$.  Recall that the canonical isomorphism $\psi: E' \to \widehat{E}$ is defined by $\psi(\chi):= e^{2\pi i \chi}$. Then $\psi^{-1}(A)\in \mathcal{P}\big(E',\sigma(E',E)\big)$ by (ii) of Proposition \ref{p:weak*-space-group}, and hence $\psi^{-1}(A)$ is equicontinuous. Now (vi) of Proposition \ref{p:weak*-space-group} implies that $A$ is equicontinuous. Thus $E$ is a $\mathcal{P}$-barreled group.
Conversely, let $E$ be a $\mathcal{P}$-barreled group and  $A\in \mathcal{P}\big(E',\sigma(E',E)\big)$. Then $\psi(A)\in \mathcal{P}\big(\widehat{E},\sigma(\widehat{E},E)\big)$ by (ii) of Proposition \ref{p:weak*-space-group}, and hence $\psi(A)$ is equicontinuous. Applying now (vi) of Proposition \ref{p:weak*-space-group} we obtain that $A$ is equicontinuous. Thus $E$ is a $\mathcal{P}$-barreled space.

(ii) follows from (i) and the fact that every weak-$\ast$ functionally bounded subset of $E'$ is weak-$\ast$ bounded, see Proposition \ref{p:FB-weakly-bound}.
\end{proof}

In what follows, for $\mathcal{P}=\mathcal{C}$ and  $\mathcal{P}=\mathcal{S}$, we shall use the standard terminology of being $g$-barrelled or $c_0$-barrelled  instead of  being  $\mathcal{C}$-barrelled or $\mathcal{S}$-barrelled, respectively.

Items (i) and (ii) of the next proposition generalizes $(a)$-$(b')$ of Proposition 2.3 of \cite{MT}.

\begin{proposition} \label{p:Mackey-g-barrelled}
Let $G$ be a $MAP$ abelian group. Then:
\begin{enumerate}
\item[{\rm (i)}] if $G$ is $\mathcal{P}$-barrelled for $\mathcal{P}\in\Pp_0$, then $G^\wedge$ respects $\mathcal{P}$;
\item[{\rm (ii)}] if $G^\wedge$ is $\mathcal{P}$-barrelled for $\mathcal{P}\in\Pp_0$ and $\alpha_G$ is a topological embedding, then $G$ respects $\mathcal{P}$;
\item[{\rm (iii)}] if $G$ is reflexive, then $G$ is $c_0$-barrelled if and only if $G^\wedge$ has the Schur property;
\item[{\rm (iv)}]  if $G$ is reflexive, then $G$ is $g$-barrelled if and only if $G^\wedge$ has the Glicksberg property;
\item[{\rm (v)}] if $G$ is reflexive and $G^\wedge$ is angelic, then $G$ is $\mathcal{CC}$-barrelled if and only if $G^\wedge$ respects $\mathcal{CC}$.
\end{enumerate}
\end{proposition}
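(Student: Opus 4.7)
The engine of the proof is a single observation: on any equicontinuous subset $A\subseteq\widehat{G}$ the pointwise topology $\sigma(\widehat{G},G)$, the Bohr topology $\sigma(\widehat{G},G^{\wedge\wedge})$ of $G^\wedge$, and the topology of $G^\wedge$ all coincide. Indeed, by Fact \ref{f:Mackey-Ban} we have $A\subseteq U^\triangleright$ for some neighborhood $U$ of zero in $G$, $U^\triangleright$ is compact in $G^\wedge$, and any Hausdorff topology coarser than a compact Hausdorff topology must equal it; since $\sigma(\widehat{G},G)\leq\sigma(\widehat{G},G^{\wedge\wedge})$ are both bounded above by the topology of $G^\wedge$, all three agree on $U^\triangleright$.

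For (i), if $A\in\mathcal{P}((G^\wedge)^+)$ then $A$ has $\mathcal{P}$ in the still coarser topology $\sigma(\widehat{G},G)$, so $\mathcal{P}$-barrelledness of $G$ yields equicontinuity, and the above observation lifts $\mathcal{P}$ to the topology of $G^\wedge$; the reverse inclusion $\mathcal{P}(G^\wedge)\subseteq\mathcal{P}((G^\wedge)^+)$ is automatic. For (ii), given $B\in\mathcal{P}(G^+)$, the hypothesis that $\alpha_G$ is a topological embedding identifies $G^+$ with the corresponding subspace of $(G^{\wedge\wedge},\sigma(G^{\wedge\wedge},G^\wedge))$, so $\alpha_G(B)$ has $\mathcal{P}$ in the pointwise topology on $G^{\wedge\wedge}$. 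The symmetric form of the observation, applied now with $G^\wedge$ in place of $G$, together with the $\mathcal{P}$-barrelledness of $G^\wedge$, makes $\alpha_G(B)$ equicontinuous (contained in some $V^\triangleright$) and lifts $\mathcal{P}$ to the topology of $G^{\wedge\wedge}$, whence via the embedding $\alpha_G$ to $G$.

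For (iii)--(v) the forward implications are just applications of (i) with $\mathcal{P}=\mathcal{S},\mathcal{C},\mathcal{CC}$ respectively. The converses all exploit a second observation valid for reflexive $G$: every compact subset $K\subseteq\widehat{G}$ of $G^\wedge$ is equicontinuous. Indeed, reflexivity makes $K^\triangleleft$ a neighborhood of zero in $G$ (the compact-open neighborhood $K^\triangleright$ of zero in $G^{\wedge\wedge}$ pulls back via $\alpha_G$ to $K^\triangleleft$), so by Fact \ref{f:Mackey-Ban} the set $K^{\triangleleft\triangleright}\supseteq K$ is equicontinuous. Combined with the equality $\sigma(\widehat{G},G)=\sigma(\widehat{G},G^{\wedge\wedge})$ (also from reflexivity), this yields: in (iii) the Schur property turns a $\sigma(\widehat{G},G)$-null sequence into a convergent, hence compact, set in $G^\wedge$, which is therefore equicontinuous; in (iv) the Glicksberg property turns a $\sigma(\widehat{G},G)$-compact set into a compact, hence equicontinuous, subset of $G^\wedge$; in (v) respecting $\mathcal{CC}$ together with angelicity of $G^\wedge$ turns a $\sigma(\widehat{G},G)$-countably compact $A$ into a countably compact, and hence (by angelicity) relatively compact, subset of $G^\wedge$, whose closure is compact and therefore equicontinuous, so $A$ itself is equicontinuous.

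The only real obstacle is organizing these two coincidences---the fact that all three topologies agree on equicontinuous sets, and the reflexive-case fact that compact subsets of $G^\wedge$ are equicontinuous---and then threading each part through the correct chain of identifications. Once these tools are in place, every item reduces to routine bookkeeping, taking care in (ii) that $\mathcal{P}\in\Pp_0$ passes between a subspace and its ambient space, which is standard for the compact-type properties considered.
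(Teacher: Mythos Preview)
Your proposal is correct and follows essentially the same route as the paper. The paper makes the same two moves---equicontinuous sets carry the same topology from $\sigma(\widehat{G},G)$ up to the compact-open topology (via Fact~\ref{f:Mackey-Ban}), and in the reflexive case a compact (or null-sequence) set in $G^\wedge$ has $K^\triangleleft=\alpha_G^{-1}(K^\triangleright)$ as a neighborhood of zero---though it does not isolate them as named observations. The only organizational difference is in (ii): the paper dispatches it in one line by applying (i) to $G^\wedge$ (so $G^{\wedge\wedge}$ respects $\mathcal{P}$) and then invoking Proposition~\ref{pSubSchur} to pass to the subgroup $\alpha_G(G)$, whereas you unwind that subgroup step by hand; the content is identical.
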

\begin{proof}
(i) Let $A\in \mathcal{P}\big( \widehat{G},\sigma(\widehat{G},G^{\wedge\wedge})\big)$. Then $A\in \mathcal{P}\big( \widehat{G},\sigma(\widehat{G},G)\big)$ as well, so $A$ is equicontinuous. Hence there is a neighborhood $U$ of zero in $G$ such that $A\subseteq U^\triangleright$ and the set $U^\triangleright$ is a compact subset of $G^\wedge$, see Fact \ref{f:Mackey-Ban}. So the identity map
$
\big( U^\triangleright, \tau_k|_{U^\triangleright}\big) \mapsto \big( U^\triangleright, \sigma(\widehat{G},G^{\wedge\wedge})|_{U^\triangleright}\big)
$
is a homeomorphism, where $\tau_k$ is the compact-open topology of the dual group $G^\wedge$. Therefore $A\in \mathcal{P}(G^\wedge)$, and hence $G^\wedge$ respects $\mathcal{P}$.

(ii) follows from (i) and Proposition \ref{pSubSchur}.

(iii) Assume that $G^\wedge$ has the Schur property and $S$ is  a $\sigma(\widehat{G},G)$-null sequence in $\widehat{G}$. By the reflexivity of $G$, $S$ is also a $\sigma(\widehat{G},G^{\wedge\wedge})$-null sequence. Hence $S$ converges to zero in $G^\wedge$ by the Schur property. Therefore $S^\triangleright$ is a neighborhood of zero in $G^{\wedge\wedge}$. So, by the reflexivity of $G$, $S^\triangleleft =\alpha_G^{-1}(S^\triangleright)$ is a neighborhood of zero in $G$. Since $S\subseteq S^{\triangleleft\triangleright} $ we obtain that $S$ is equicontinuous, see Fact \ref{f:Mackey-Ban}. Thus $G$ is $c_0$-barrelled. The converse assertion follows from (i).

The proof of (iv) is similar to the proof of (iii).

(v) Assume that $G^\wedge$ respects countable compactness. Let $A$ be  a $\sigma(\widehat{G},G)$-countably compact subset of  $\widehat{G}$. By the reflexivity of $G$, $A$ is also $\sigma(\widehat{G},G^{\wedge\wedge})$-countably compact. As $G^\wedge$ respects $\mathcal{CC}$, $A$ is countably compact in $G^\wedge$ and hence $A$ is compact by the angelicity of $G^\wedge$. The rest of the proof repeats (iii) replacing $S$ by $A$.
%
%
\end{proof}

\begin{corollary}[\cite{CMPT}]
A locally compact abelian group $G$ is $g$-barrelled.
\end{corollary}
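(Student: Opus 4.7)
The plan is to deduce this from the duality established in Proposition \ref{p:Mackey-g-barrelled}(iv) together with the classical Glicksberg theorem. Since $G$ is an $LCA$ group, Pontryagin--van~Kampen duality gives that $G$ is reflexive, so item (iv) of Proposition \ref{p:Mackey-g-barrelled} is applicable: $G$ is $g$-barrelled if and only if its dual group $G^\wedge$ has the Glicksberg property.

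Next, the dual group of an $LCA$ group is itself an $LCA$ group, so $G^\wedge \in \mathbf{LCA}$. By the Glicksberg theorem (recalled in the introduction, and also reflected in the inclusion $\mathbf{LCA}\subseteq \mathbf{KA}\cap\mathfrak{RC}$ from Proposition \ref{p:Schur-categorical}(5)), every $LCA$ group respects compactness, i.e.\ has the Glicksberg property. Therefore $G^\wedge$ has the Glicksberg property, and the equivalence from Proposition \ref{p:Mackey-g-barrelled}(iv) yields that $G$ is $g$-barrelled.

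There is no real obstacle here: the whole corollary is a packaging of two inputs, reflexivity of $G$ and the Glicksberg property of $G^\wedge$, plugged into the dual characterization in Proposition \ref{p:Mackey-g-barrelled}(iv). If one prefers a self-contained unwinding, one can also argue directly: given a $\sigma(\widehat{G},G)$-compact subset $K$ of $\widehat{G}$, reflexivity of $G$ gives $\sigma(\widehat{G},G)=\sigma(\widehat{G},G^{\wedge\wedge})$, so $K$ is compact in the Bohr topology of $G^\wedge$; then Glicksberg's theorem applied to the $LCA$ group $G^\wedge$ makes $K$ compact in $G^\wedge$, and hence $K=K^{\triangleleft\triangleright}\subseteq U^\triangleright$ for some neighborhood $U$ of zero in $G$ (using reflexivity again to identify $K^\triangleleft$ as a neighborhood of zero in $G$), which by Fact \ref{f:Mackey-Ban} is exactly the equicontinuity of $K$.
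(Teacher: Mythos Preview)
Your proof is correct and follows essentially the same route as the paper: reflexivity of $G$ via Pontryagin--van~Kampen, the Glicksberg property for the $LCA$ group $G^\wedge$, and then Proposition~\ref{p:Mackey-g-barrelled}(iv). One small slip in your optional direct unwinding: you write $K=K^{\triangleleft\triangleright}$, but $K$ need not be quasi-convex; only the inclusion $K\subseteq K^{\triangleleft\triangleright}$ is needed (and holds), which suffices for equicontinuity.
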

\begin{proof}
Since $G$ is reflexive and $G^\wedge$ is locally compact by the Pontryagin--van Kampen duality theorem, the assertion follows from Glicksberg's theorem and (iv) of Proposition \ref{p:Mackey-g-barrelled}.
\end{proof}

The condition of being reflexive in Proposition \ref{p:Mackey-g-barrelled} is essential as the following example shows. 

\begin{example} \label{exa:non-reflexive-g-barrelled}
The  group $G:=C_p(\mathbf{s},2)$ of all continuous maps from $\mathbf{s}$ to the discrete group $\ZZ(2)$ with the pointwise topology has the following properties:
\begin{enumerate}
\item[{\rm (i)}] $G$ is a countable non-reflexive precompact metrizable group;
\item[{\rm (ii)}] every compact subset of $G^\wedge$ is equicontinuous;
\item[{\rm (iii)}] $G^\wedge$ respect all the properties $\mathcal{P}\in \Pp$;
\item[{\rm (iv)}]  $G$ is not $c_0$-barrelled.
\end{enumerate}
\end{example}

\begin{proof}
(i) Observe that $G$ is a dense proper subgroup of the compact metrizable group $\ZZ(2)^\NN$, so $G$ is metrizable. Being non-complete $G$ is not reflexive, see \cite{Cha}. To show that $G$ is countable, for every $n\in\NN$, set $F_n:=\{ e_1,\dots,e_n\}$ and $U_n:= \mathbf{s}\setminus F_n$. If $f\in G$, there is an $n\in\NN$ such that $f|_{U_n}= f(e_0)\in \ZZ(2)$. Therefore $f$ is uniquely defined by its values on $F_n\cup \{e_0\}$. Thus $G$ is countable.

(ii),(iii) By \cite{Aus,Cha}, the group $G^\wedge$ is the countable direct sum $\bigoplus_\NN \ZZ(2)$ endowed with the discrete topology. So every compact subset of $G^\wedge$ is finite and hence equicontinuous. Since  $G^\wedge$ is discrete, it respects all properties $\mathcal{P}\in \Pp$ (see Introduction). 

(iv) For every $n\in\NN$, set
\[
\chi_n := \big( \underbrace{0,\dots,0}_{2n}, 1,1,0, \dots\big)
\]
Taking into account the description of continuous functions given in (i), we obtain
\[
\chi_n(f)=\exp\big\{ \pi i \big( f(e_{2n+1}) + f(e_{2n+2})\big) \big\} =1
\]
for all sufficiently large $n\in\NN$. Thus $\chi_n\to 0$ in the pointwise topology on $\widehat{G}$. To show that $G$ is not $c_0$-barrelled it suffices to prove that the sequence $S:=\{ \chi_n\}_{n\in\NN}$ is not equicontinuous. For every $n\in\NN$, define $f_n\in G$ by
\[
f_n(e_n):=1, \mbox{ and } f_n(e_m)=0 \mbox{ if } m\not= n.
\]
It is clear that $f_n \to 0$ in $G$. Since $\chi_n(f_{2n+1})=\exp\{\pi i\} =-1$ we obtain that $S$ is not equicontinuous.
\end{proof}

In Remark~16 of \cite{CMPT}, Mendoza has pointed out that if $E$ is a non-reflexive real Banach space, then $\big(E',\mu(E',E)\big)$, where $\mu(E',E)$ is the Mackey topology on $E'$, is a $g$-barrelled lcs which is not barrelled. 
So the converse in (ii) of Proposition \ref{p:space-group-barrelled} is not true in general. Below we propose an analogous example of a $g$-barrelled real lcs $E$ which is not barrelled.
\begin{example} \label{exa:g-barreled-nobarreled} {\em
Let $(E,\tau)$ be a real non-semi-reflexive lcs. Assume that $E$ is complete and has the Glicksberg property (for example, $E=\ell_1^\kappa$ for some cardinal $\kappa$). 
Set $F:=\big(E',\mu(E',E)\big)$, where $\mu(E',E)$ is the Mackey topology on $E'$. As $E$ is not semi-reflexive, the space $F$ is not barrelled by Theorem 11.4.1 of \cite{Jar}.
To show that $F$ is a $g$-barrelled space take arbitrarily a compact subset $K$ of $\big(F',\sigma(F',F)\big)=\big(E,\sigma(E,E')\big)$. Denote by $C:=\overline{\mathrm{acx}}(K)$ the closed absolutely convex hull of $K$. We claim that $C$ is also $\sigma(E,E')$-compact. Indeed, the set $K\cup(-K)$ is $\tau$-compact by the Glicksberg property of $E$. So $C$ is $\tau$-compact in $E$ by Theorem 4.8.9 of \cite{NaB}. Thus $C$ is $\sigma(E,E')$-compact as well. 
Now the definition of the Mackey topology $\mu(E',E)$ and the claim imply that $C$ and hence $K$ are equicontinuous. Therefore $F$ is a $g$-barrelled space.

Assume additionally that $E$ is a Banach space. Then $E$ is a reflexive group by \cite{Smith}, and Proposition \ref{p:Mackey-g-barrelled}(iv) implies that $E^\wedge$ is a $g$-barrelled group. Hence $(E^\wedge)^\wedge =E$ and the group $E^\wedge$ is a Mackey group, see \cite{CMPT} (or Proposition \ref{p:g-barrelled-Mackey} below). By Fact \ref{f:dual-E}, the dual space $E'$ endowed with the compact-open topology $\tau_k$ is topologically isomorphic to $E^\wedge$. Therefore $(E',\tau_k)$ is a Mackey space such that $(E',\tau_k)'=E$. Thus $\tau_k=\mu(E',E)$ by the uniqueness of the Mackey space topology.
$\Box$ }
%
%
\end{example}

Every abelian locally quasi-convex  $k_\w$-group $G$ is a Schwartz group by Corollary 5.5 of \cite{ACDT}, and hence $G$ has the Glicksberg property by \cite{Aus2}. Below we essentially generalize this result using a completely different method.
\begin{theorem} \label{t:k-omega-Schur}
An abelian locally quasi-convex $k_\w$-group $G$  respects all properties $\mathcal{P}\in \Pp$.
\end{theorem}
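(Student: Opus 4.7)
The plan is to reduce the theorem to Theorem \ref{t:Glicksberg-respecting}, which, for a $MAP$ group $G$ with $G^+$ a $\mu$-space, makes the Glicksberg property equivalent to respecting countable compactness, pseudocompactness, and functional boundedness (and forces $G$ itself to be a $\mu$-space). Combined with Proposition \ref{p:Schur=sequential-comp}, any of these also yields the Schur property and hence respecting sequential compactness. So the whole statement reduces to two verifications: that $G^+$ is a $\mu$-space, and that $G$ has the Glicksberg property.

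For the first verification, write $G = \bigcup_{n\in\NN} C_n$ with $C_n$ an increasing sequence of compact sets. Then $G$ is $\sigma$-compact and therefore Lindel\"of, and since the identity map $G \to G^+$ is a continuous surjection, $G^+$ is Lindel\"of as well. Every Lindel\"of Tychonoff space is a $\mu$-space, so both $G$ and $G^+$ are $\mu$-spaces. For the second verification, one can simply invoke the excerpt's own recollection: $G$ is a Schwartz group by Corollary~5.5 of \cite{ACDT}, and every locally quasi-convex Schwartz group respects compactness by \cite{Aus2}. Feeding these two facts into Theorem \ref{t:Glicksberg-respecting} yields respecting countable compactness, pseudocompactness, and functional boundedness at once, and Proposition \ref{p:Schur=sequential-comp}(i)--(iii) then completes the list, so $G$ respects every $\mathcal{P} \in \Pp$.

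The expected main obstacle, given the author's signal that a \emph{completely different method} is intended, is to replace the Schwartz-based derivation of the Glicksberg property by one that exploits the $k_\omega$ decomposition and the local quasi-convexity directly. A natural candidate is to show that every $G^+$-compact subset $K \subseteq G$ is already contained in some $C_n$, whence $K$ is closed in the compact set $C_n$ and hence compact in $G$. The difficulty is that an escape sequence $x_n \in K \setminus C_n$ is automatically discrete and closed in $G$ but not obviously so in $G^+$; controlling its Bohr cluster points is exactly where the lqc hypothesis would have to be used in a non-obvious way. Regardless of how one proves Glicksberg, the reduction via $G^+$ being a $\mu$-space is the genuinely new ingredient that upgrades respecting compactness to respecting all of $\Pp$.
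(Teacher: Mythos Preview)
Your proof is correct. The reduction via Theorem \ref{t:Glicksberg-respecting} is sound: $G$ is $\sigma$-compact hence Lindel\"of, so $G^+$ is Lindel\"of and therefore a $\mu$-space (the paper uses exactly this step in Corollary \ref{c:G-base-Schur-Lind}); the Schwartz route then supplies the Glicksberg property, and Theorem \ref{t:Glicksberg-respecting} together with Proposition \ref{p:Schur=sequential-comp} delivers the remaining properties.

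However, the paper deliberately avoids the Schwartz argument and instead proceeds by duality. It first proves the \emph{Claim}: for any metrizable abelian group $H$, the dual $H^\wedge$ respects all $\mathcal{P}\in\Pp$. This is obtained not from Schwartz but from the fact that the completion $\bar H$ is $g$-barrelled (Corollary~1.6 of \cite{CMPT}), so $H^\wedge=\bar H^\wedge$ has the Glicksberg property by Proposition \ref{p:Mackey-g-barrelled}(i); then the same Lindel\"of/$\mu$-space upgrade you use (applied to the $k_\omega$-group $H^\wedge$) together with completeness of $k_\omega$-groups finishes the Claim. For the theorem itself, since $G^\wedge$ is metrizable, the Claim gives that $G^{\wedge\wedge}$ respects all $\mathcal{P}\in\Pp$; local quasi-convexity makes $\alpha_G$ an embedding, and completeness of $G$ makes $\alpha_G(G)$ a closed (hence $C$-embedded) subgroup of the $k_\omega$-group $G^{\wedge\wedge}$, so $G$ inherits the respecting properties via Proposition \ref{pSubSchur} and Proposition~4.9 of \cite{Gab-Top-Nul}. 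Your route is shorter and entirely self-contained modulo the cited Schwartz results; the paper's route trades that external input for the duality machinery developed in Section \ref{sec:3}, and its intermediate Claim is reused later (e.g.\ in Proposition \ref{p:k-BCP}(iii)).
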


\begin{proof}
First we prove the following claim.

{\em Claim. If $G$ is a metrizable abelian group, then $G^\wedge$  respects all properties $\mathcal{P}\in \Pp$.} Indeed, let $\bar{G}$ be the completion of $G$. Then $\bar{G}^\wedge =G^\wedge$ by \cite{Aus,Cha} and $\bar{G}$ is $g$-barrelled by Corollary 1.6 of \cite{CMPT}. Therefore $G^\wedge$ has the Glicksberg property by (i) of Proposition \ref{p:Mackey-g-barrelled}. By \cite{Aus,Cha}, $G^\wedge$ is a $k_\w$-space, and hence $(G^\wedge)^+$ is a $\mu$-space. Since every $k_\w$-group is complete by \cite{HuntMorris}, the group  $G^\wedge$  respects all properties $\mathcal{P}\in \Pp$ by Theorem 1.2 of \cite{Gab-Top-Nul}. The claim is proved.

Note that $G^\wedge$ is metrizable, and hence $G^{\wedge\wedge}$ respects all properties $\mathcal{P}\in \Pp$ by the claim. By 5.12 and 6.10 of \cite{Aus}, the canonical homomorphism $\alpha_G$ is an embedding of $G$ into $G^{\wedge\wedge}$. Since $G$ is complete by \cite{HuntMorris},  $\alpha_G(G)$ is a closed subgroup of the $k_\w$-group $G^{\wedge\wedge}$. Therefore $\alpha_G(G)$ is $C$-embedded in $G^{\wedge\wedge}$ by \cite[3D.1]{GiJ}. Thus $G$ respects all properties $\mathcal{P}\in \Pp$ by Propositions  4.9 of  \cite{Gab-Top-Nul} and Proposition  \ref{pSubSchur}.
\end{proof}

Following \cite{GGH}, a topological group $G$ is called a {\em locally $k_\w$-group} if it has an open $k_\w$-subgroup.
\begin{corollary} \label{c:locally-k-omega}
A locally quasi-convex locally $k_\w$-group $G$  respects all properties $\mathcal{P}\in \Pp_0$.
\end{corollary}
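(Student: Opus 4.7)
The plan is to lift Theorem \ref{t:k-omega-Schur} from an open $k_\omega$-subgroup $H$ of $G$ to all of $G$ via a coset-decomposition argument. Being open in the locally quasi-convex group $G$, the subgroup $H$ is itself locally quasi-convex, so by Theorem \ref{t:k-omega-Schur} it respects every $\mathcal{P}\in\Pp$. The first auxiliary step I would establish is that the Bohr topology $H^+$ coincides with the subspace topology $\tau^+|_H$ inherited from $G^+$: since $H$ is open in $G$ and the circle $\Ss$ is divisible and hence injective in the category of abelian groups, every continuous character of $H$ extends to a continuous character of $G$ (continuity of the extension follows from $H$ being open). This gives $\widehat{H}=\widehat{G}|_H$ and hence $\sigma(H,\widehat{H})=\sigma(G,\widehat{G})|_H$. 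Since the quotient $G/H$ is a discrete (hence $MAP$) abelian group, $H$ and each of its cosets are closed in $G^+$ as well.

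Next, fix $\mathcal{P}\in\Pp_0$ and $A\in\mathcal{P}(G^+)$. Every property in $\Pp_0$ forces $A$ to be functionally bounded in $G^+$, hence precompact there by Lemma \ref{l:func-bounded-precompact}, and therefore precompact in $G$, since the Bohr correspondence preserves precompactness for $MAP$ abelian groups. The image of $A$ in the discrete quotient $G/H$ is then precompact, and so finite, forcing $A\subseteq\bigcup_{i=1}^{n}(g_i+H)$ for some $g_1,\dots,g_n\in G$. Because the cosets $g_i+H$ are closed and pairwise disjoint in $G^+$, the pieces $A_i:=A\cap(g_i+H)$ form a clopen partition of $A$ in its induced Bohr topology, and each $A_i$ inherits $\mathcal{P}$ from $A$ (closed subsets suffice for $\mathcal{C},\mathcal{SC},\mathcal{CC}$, while for $\mathcal{PC}$ the extension-by-zero trick across a clopen partition handles pseudocompactness).

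The translates $B_i:=A_i-g_i\subseteq H$ then lie in $\mathcal{P}(G^+)$ by translation invariance of the group $G^+$, hence in $\mathcal{P}(H^+)$ by the first step. Applying Theorem \ref{t:k-omega-Schur} to $H$ yields $B_i\in\mathcal{P}(H)$, so $A_i=g_i+B_i\in\mathcal{P}(G)$, and the finite-union stability of $\mathcal{C}$, $\mathcal{SC}$, $\mathcal{CC}$, and $\mathcal{PC}$ gives $A\in\mathcal{P}(G)$. The case $\mathcal{P}=\mathcal{S}$ requires one extra step: if $a_n\to a_0$ in $G^+$, the closedness of cosets in $G^+$ prevents infinitely many terms from lying in a coset other than $a_0+H$, so the sequence is eventually contained in $a_0+H$; then $a_n-a_0\to 0$ in $H^+$, hence in $H$ by the Schur property of $H$, which gives $a_n\to a_0$ in $G$.

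The main obstacle I anticipate is the treatment of $\mathcal{P}=\mathcal{S}$, since convergent sequences are not closed under finite unions, so the coset decomposition alone does not suffice; one must additionally exploit the fact that the discrete quotient $G/H$ is $MAP$ to force eventual concentration of any Bohr-convergent sequence in a single coset, reducing the problem to the Schur property on $H$.
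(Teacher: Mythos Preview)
Your coset-decomposition strategy is sound and is the natural way to lift Theorem~\ref{t:k-omega-Schur} from the open $k_\omega$-subgroup $H$ to $G$. The paper's one-line proof simply cites Theorem~\ref{t:k-omega-Schur} and Proposition~\ref{pSubSchur}, leaving this lifting implicit (note that Proposition~\ref{pSubSchur} as stated only passes respecting properties from a group \emph{down} to a subgroup, so some argument like yours is in fact required).

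There is, however, a genuine error in your finiteness step. You assert that $A$, being precompact in $G^+$, is therefore precompact in $G$ ``since the Bohr correspondence preserves precompactness.'' This is false: $G^+$ is itself a precompact group, so \emph{every} subset of $G^+$ is precompact there, and precompactness in $G^+$ does not transfer back to $G$ (take $G=\IR$ and $A=\IR$). The correct route to finitely many cosets is to note that the quotient homomorphism $q:G\to G/H$ is also continuous as a map $G^+\to (G/H)^+$ (characters of the discrete group $G/H$ pull back to $\widehat{G}$); hence $q(A)$ is functionally bounded in $(G/H)^+$, and since the discrete abelian group $G/H$ respects functional boundedness \cite{Tri91}, $q(A)$ is functionally bounded in the discrete topology and therefore finite. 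The same correction is needed in your treatment of $\mathcal{P}=\mathcal{S}$: closedness of each coset in $G^+$ guarantees only that any \emph{single} coset other than $a_0+H$ contains finitely many terms, but without knowing that only finitely many cosets are visited you cannot conclude eventual concentration in $a_0+H$. Once $q(A)$ is known to be finite, the remainder of your argument goes through.
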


\begin{proof}
The assertion follows from Theorem \ref{t:k-omega-Schur} and Proposition  \ref{pSubSchur}.
\end{proof}

In the next example we show that the condition of being locally quasi-convex cannot be dropped in Theorem \ref{t:k-omega-Schur}.  Denote by $\VV(\mathbf{s})$ and $L(\mathbf{s})$ the free topological vector space and the free locally convex space over the convergent sequence $\mathbf{s}$, respectively.
\begin{example} \label{exa:k-omega-not-Schur}
{\rm (i)}  $\VV(\mathbf{s})$ is a non-locally quasi-convex $MAP$ $k_\w$-group, so $\VV(\mathbf{s})$ is a Schwartz group;

{\rm (ii)}  $\VV(\mathbf{s})$ does not have the Schur property, and hence $\VV(\mathbf{s})$ does not respect any  $\mathcal{P}\in \Pp$.
\end{example}

\begin{proof}
(i) The space $\VV(\mathbf{s})$ is a $k_\w$-group by Theorem 3.1 of \cite{GM}, and it is not locally quasi-convex by Proposition 5.13 of \cite{GM} and the fact that a topological vector space $E$ is a locally quasi-convex group if and only if $E$ is locally convex (see Proposition 2.4 of \cite{Ban}). By Proposition 5.1 of \cite{GM}, the space $\VV(\mathbf{s})$ is a $MAP$ group. As a $k_\w$-group, $\VV(\mathbf{s})$ is a Schwartz group by Corollary 5.5 of \cite{ACDT}.

(ii) Since the spaces $\VV(\mathbf{s})$ and $L(\mathbf{s})$ have the same dual space (see Proposition 5.10 of \cite{GM}), it is sufficient to find a sequence $\{ z_k\}_{k\in\NN}$ such that $\{ z_k\}_{k\in\NN}$ converges in $L(\mathbf{s})$ but it does not converge in $\VV(\mathbf{s})$.  For every $k\in\NN$, set $d_k :=2^k$ and put
\[
z_k :=\frac{1}{d_{k+1}-d_k} \left( e_{d_{k}+1} +\cdots + e_{d_{k+1}}\right).
\]
Then $z_k\to e_0$ in $L(\mathbf{s})$ because $L(\mathbf{s})$ is locally convex and since $e_n \to e_0$. On the other hand, $z_k\not\to e_0$ in $\VV(\mathbf{s})$ by Corollary 3.4 of \cite{GM}.

Since any $k_\w$-space is a $\mu$-space, the last assertion follows from Proposition \ref{p:Schur=sequential-comp}.
\end{proof}

Recall that a topological group $X$ is said to have a {\em subgroup topology} if it has a base at the identity consisting of subgroups.
In the next section we use the following proposition.
\begin{proposition} \label{p:nuclear-Mackey}
{\em (i)} If $G$ is an abelian topological group with a subgroup topology, then $G$ is a  locally quasi-convex nuclear group. So $G$ respects all properties $\mathcal{P}\in \Pp_0$.

{\em (ii)} If $(G,\tau)$ is a locally quasi-convex abelian group of finite exponent, then $(G,\tau)$ and hence also $(G,\tau)^\wedge$ respect all the properties $\mathcal{P}\in \Pp_0$.
\end{proposition}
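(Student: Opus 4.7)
For part (i), my plan is to reduce to the known theorem that nuclear groups respect all properties in $\Pp$. First I would verify that an abelian topological group $(G,\tau)$ with a base $\mathcal{B}$ of open subgroups at zero is locally quasi-convex: each $H\in\mathcal{B}$ is quasi-convex because for $g\notin H$ the discrete quotient $G/H$ admits a character $\bar\chi$ with $\bar\chi(g+H)\neq 1$, and since every $z\in\Ss\setminus\{1\}$ has an integer power outside $\Ss_+$, a suitable multiple $k\bar\chi$ lifts along the quotient map to a character $\chi\in\widehat G$ vanishing on $H\subseteq\Ss_+$ and sending $g$ outside $\Ss_+$. Next I would argue that $G$ is nuclear by embedding $G$ topologically into the product $\prod_{H\in\mathcal{B}} G/H$ of discrete (hence nuclear) groups and invoking the closure of the nuclear class under products and subgroups, as in \cite{Ban}. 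Finally, the theorem of Banaszczyk and Martín-Peinador \cite{BaMP2} gives that $G$ respects $\mathcal{C}$, $\mathcal{CC}$, $\mathcal{PC}$, and $\mathcal{FB}$, whence items (i)--(ii) of Proposition~\ref{p:Schur=sequential-comp} deliver the Schur property and respecting $\mathcal{SC}$ as well; this covers all of $\Pp_0$.

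For part (ii), the plan is to show that a locally quasi-convex abelian group $(G,\tau)$ of exponent $n$ actually has a base of open subgroups at zero, after which (i) applies. Take a quasi-convex neighborhood $U$ of zero and set $K:=U^\triangleright$. By Fact~\ref{f:Mackey-Ban}, $K$ is equicontinuous, and since $nG=0$ every $\chi\in\widehat G$ takes values in the finite group of $n$-th roots of unity. Fix $0<\eps<|e^{2\pi i/n}-1|$. Equicontinuity produces a neighborhood $W$ of zero with $|\chi(x)-1|<\eps$ for all $x\in W$ and $\chi\in K$; since each $\chi(x)$ is an $n$-th root of unity, this forces $\chi(x)=1$ for every such $x$ and $\chi$. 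Consequently $W\subseteq V:=\bigcap_{\chi\in K}\ker\chi$, so $V$ is an open subgroup; and because $\chi(V)=\{1\}\subseteq\Ss_+$ for all $\chi\in K$, we have $V\subseteq K^\triangleleft=U^{\triangleright\triangleleft}=U$. Hence $\tau$ admits a neighborhood base of open subgroups at zero, and (i) yields that $(G,\tau)$ respects all $\mathcal{P}\in\Pp_0$. For the ``hence'' clause, note that $(G,\tau)^\wedge$ is locally quasi-convex as the dual of any abelian topological group (see \cite{Ban}) and has exponent $n$ because $n\chi=0$ for every $\chi\in\widehat G$, so applying the same argument to $(G,\tau)^\wedge$ gives the conclusion.

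The main obstacle I expect is justifying nuclearity for groups with subgroup topology: recognizing $G$ as a topological subgroup of a product of discrete groups and invoking stability of the nuclear class under products and subgroups relies on the structural machinery of \cite{Ban}. Everything else is either elementary (quasi-convexity of open subgroups, the equicontinuity manipulation forcing characters in $K$ to kill a whole neighborhood of zero) or a direct application of the preparatory results already assembled in the paper.
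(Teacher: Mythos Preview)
Your proposal is correct and follows essentially the same route as the paper. For (i) the paper cites Proposition~2.2 of \cite{AG} for the embedding into a product of discrete groups, Propositions~7.5, 7.6 and Theorem~8.5 of \cite{Ban} for nuclearity and local quasi-convexity, and Corollary~4.7 of \cite{Gab-Top-Nul} for the respecting properties; you instead spell out the embedding and quasi-convexity arguments directly and invoke \cite{BaMP2} together with Proposition~\ref{p:Schur=sequential-comp}, which is an equivalent path. For (ii) the paper cites Proposition~2.1 of \cite{AG} to obtain subgroup topologies on both $(G,\tau)$ and $(G,\tau)^\wedge$, whereas you prove this directly via the equicontinuity argument and then handle the dual by noting it is again locally quasi-convex of exponent $n$; this is exactly the content of the cited proposition, so the two proofs coincide in substance.
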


\begin{proof}
(i) By Proposition 2.2 of \cite{AG}, $G$ embeds into a product of discrete groups. Therefore $G$ is a  locally quasi-convex nuclear group by Propositions 7.5 and 7.6 and Theorem 8.5 of \cite{Ban}. Finally, the group $G$ respects all properties $\mathcal{P}\in \Pp_0$ by Corollary 4.7 of  \cite{Gab-Top-Nul}.

(ii) Propositions 2.1  of \cite{AG} implies that the topologies of the groups $(G,\tau)$ and  $(G,\tau)^\wedge$ are subgroup topologies, and (i) applies.
\end{proof}


Being motivated by \cite{MT}, we consider below a ``compact'' version of the Dunford--Pettis property for abelian topological groups.
Let $X$ and $Y$ be topological spaces. A map $p:X\to Y$ is called {\em $s$-continuous} ({\em $k$-continuous}) if the restriction of $p$ onto every convergent sequence (every compact subset, respectively) of $X$ is continuous. Clearly, every $k$-continuous map is also $s$-continuous.
The next lemma is straightforward.
\begin{lemma} \label{l:evaluation-k-cont}
For every abelian topological group $G$ the evaluation map $\psi: G^\wedge \times G \to \Ss$, $\psi(\chi,g):=\chi(g)$, is $k$-continuous.
\end{lemma}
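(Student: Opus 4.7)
The plan is to reduce to proving continuity on a product of compact sets, and then use the defining property of the compact-open topology on $G^\wedge$.

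First I would fix a compact subset $K \subseteq G^\wedge \times G$ and set $K_1 := \pi_1(K) \subseteq G^\wedge$ and $K_2 := \pi_2(K) \subseteq G$; these are compact as continuous images, and $K \subseteq K_1 \times K_2$. Hence it suffices to prove that $\psi$ restricted to the (larger) set $K_1 \times K_2$ is continuous, since the restriction of $\psi$ to $K$ factors through this restriction.

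Now fix $(\chi_0, g_0) \in K_1 \times K_2$ and $\e > 0$. I would use the standard triangle inequality decomposition
\[
|\chi(g) - \chi_0(g_0)| \leq |\chi(g) - \chi_0(g)| + |\chi_0(g) - \chi_0(g_0)|,
\]
and estimate the two terms separately. For the second term, continuity of the single character $\chi_0 : G \to \Ss$ at $g_0$ yields a neighborhood $V$ of $g_0$ in $G$ such that $|\chi_0(g) - \chi_0(g_0)| < \e/2$ for all $g \in V$. For the first term, I would use that $K_2$ is compact in $G$ and invoke the very definition of the compact-open topology on $G^\wedge$: the set
\[
U := \{ \chi \in G^\wedge : |\chi(g) - \chi_0(g)| < \e/2 \;\; \forall g \in K_2\}
\]
is a basic open neighborhood of $\chi_0$ in $G^\wedge$. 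Then for every $(\chi, g) \in (U \times V) \cap (K_1 \times K_2)$, both estimates are available (the first because $g \in K_2$ and $\chi \in U$, the second because $g \in V$), giving $|\chi(g) - \chi_0(g_0)| < \e$.

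There is essentially no obstacle here: the argument is a clean combination of the triangle inequality with the two relevant continuity data (the continuity of the fixed character $\chi_0$, and the compact-open topology on the dual group). The only thing to be mindful of is that one does not need $K_1$ to be equicontinuous (which would fail for a general abelian topological group $G$): the uniformity in $\chi$ is obtained only over $g \in K_2$, which is exactly what the compact-open topology provides for free. This also explains why one cannot typically upgrade $k$-continuity of $\psi$ to continuity without extra hypotheses on $G$.
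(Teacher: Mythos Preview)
Your argument is correct and is exactly the standard one. The paper itself does not spell out a proof of this lemma at all; it simply declares the statement ``straightforward'' and moves on, so your write-up is in fact more detailed than what the paper provides.
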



Following \cite{MT}, an abelian topological group $G$ has the {\em sequential Bohr continuity property} ($s$-$BCP$, for short) if the map
\begin{equation} \label{equ:eval-map-Bohr}
\psi: \big( \widehat{G}, \sigma(\widehat{G}, G^{\wedge\wedge})\big) \times \big( G,\sigma(G,G^\wedge)\big) \to \Ss, \quad \psi(\chi,g):=\chi(g),
\end{equation}
is $s$-continuous. We shall say that $G$ has the {\em $k$-Bohr continuity property} ($k$-$BCP$, for short) if the map in (\ref{equ:eval-map-Bohr}) is $k$-continuous. Clearly, if $G$ has the $k$-$BCP$ then it has also the $s$-$BCP$. The next assertion is an analogue of Proposition 2.4 of \cite{MT} and generalizes (d) and (c) of this proposition.
\begin{proposition} \label{p:k-BCP}
For an abelian topological group $G$ the following assertionds hold:
\begin{enumerate}
\item[{\rm (i)}] if $G$ and $G^\wedge$ have the Glicksberg property, then $G$ has the $k$-$BCP$;
\item[{\rm (ii)}] if $G$ has the Glicksberg property and is $g$-barrelled, then $G$ has the $k$-$BCP$;
\item[{\rm (iii)}] if $G$ is metrizable and has the Glicksberg property, then $G$ has the $k$-$BCP$;
\item[{\rm (iv)}] if $G$ is locally compact, then $G$ has the $k$-$BCP$;
\item[{\rm (v)}] if $G$ is a locally quasi-convex almost metrizable Schwartz group, then $G$ has the $k$-$BCP$;
\item[{\rm (vi)}] if $G$ is reflexive, then $G$ has the $k$-$BCP$ if and only if $G^\wedge$ has the $k$-$BCP$.
\end{enumerate}
\end{proposition}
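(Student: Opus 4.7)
The unifying idea for parts (i)--(v) is that Lemma \ref{l:evaluation-k-cont} already gives $k$-continuity of the evaluation when $\widehat{G}$ carries the compact-open topology $\tau_k$ and $G$ carries its original topology $\tau$. Since $\sigma(\widehat{G},G^{\wedge\wedge})$ is the Bohr topology of $G^\wedge$ and is therefore coarser than $\tau_k$, and $\sigma(G,\widehat{G})$ is the Bohr topology of $G$, each part reduces to showing that every compact set $K\subseteq\big(\widehat{G},\sigma(\widehat{G},G^{\wedge\wedge})\big)$ is already $\tau_k$-compact and every compact set $L\subseteq G^+$ is already $\tau$-compact. Once this is done the restricted topologies on $K$ and $L$ agree with the stronger ones, and $\psi|_{K\times L}$ is continuous by Lemma \ref{l:evaluation-k-cont}.

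Concretely, (i) is exactly the case when both upgrades are supplied by the Glicksberg hypotheses. For (ii), the Glicksberg property of $G$ takes care of $L$; for $K$, note that $\sigma(\widehat{G},G)\leq\sigma(\widehat{G},G^{\wedge\wedge})$, so $K$ is $\sigma(\widehat{G},G)$-compact and hence equicontinuous by $g$-barrelledness. Then $K\subseteq U^\triangleright$ for some neighborhood $U$ of zero by Fact \ref{f:Mackey-Ban}, and on the equicontinuous set $U^\triangleright$ the topologies $\tau_k$ and $\sigma(\widehat{G},G)$ coincide, so $K$ is $\tau_k$-compact. For (iii), if $G$ is metrizable, then $G^\wedge$ is a locally quasi-convex $k_\w$-group by \cite{Aus,Cha}, and Theorem \ref{t:k-omega-Schur} endows $G^\wedge$ with the Glicksberg property; combined with the hypothesis, (iii) reduces to (i). For (iv), Glicksberg's theorem gives the Glicksberg property for the locally compact group $G$, and the Pontryagin--van Kampen theorem makes $G^\wedge$ locally compact, so (iv) also reduces to (i). For (v), a locally quasi-convex Schwartz group has the Glicksberg property by \cite{Aus2}, and an almost metrizable locally quasi-convex group is $g$-barrelled by a completion argument in the spirit of \cite{CMPT}, so (v) reduces to (ii).

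For (vi) I exploit the reflexivity isomorphism $\alpha_G:G\to G^{\wedge\wedge}=\widehat{G^\wedge}$ to identify the two evaluation maps. For $g\in G$ and $\chi\in\widehat{G}$,
\[
\psi_{G^\wedge}(\alpha_G(g),\chi)=\alpha_G(g)(\chi)=\chi(g)=\psi_G(\chi,g).
\]
Under $\alpha_G$ the topology $\sigma(G,\widehat{G})$ on the second factor of the domain of $\psi_G$ corresponds to $\sigma(G^{\wedge\wedge},\widehat{G^\wedge})$ on the first factor of the domain of $\psi_{G^\wedge}$, and $\sigma(\widehat{G},G^{\wedge\wedge})$ on the first factor of the domain of $\psi_G$ coincides with $\sigma(G^\wedge,\widehat{G^\wedge})$ on the second factor of the domain of $\psi_{G^\wedge}$. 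Thus the two domains agree up to a swap of factors, and $k$-continuity is preserved under swapping, so the two $k$-$BCP$'s are equivalent.

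The step I expect to be the main obstacle is the $g$-barrelledness input in (v), since an almost metrizable locally quasi-convex group need not be complete, so one must verify that the completion argument of \cite{CMPT} still produces $g$-barrelledness of $G$ itself; this amounts to showing that a $\sigma(\widehat{G},G)$-compact set in $\widehat{G}=\widehat{\bar G}$ is in fact $\sigma(\widehat{\bar G},\bar G)$-compact, which uses the almost metrizability in an essential way. The remaining parts are essentially bookkeeping on top of Lemma \ref{l:evaluation-k-cont} and the machinery collected in the preceding sections.
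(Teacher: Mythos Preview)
Your treatment of (i)--(iv) and (vi) is essentially the paper's argument, only with (ii) unpacked: the paper simply quotes Proposition~\ref{p:Mackey-g-barrelled}(i) to conclude that $g$-barrelledness of $G$ forces $G^\wedge$ to have the Glicksberg property and then reduces to (i), which is exactly the content of your equicontinuity computation. Your (vi) is precisely the paper's reasoning.

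The divergence is in (v), and the obstacle you flag is real. You try to reduce (v) to (ii) by arguing that an almost metrizable locally quasi-convex group is $g$-barrelled via a completion argument; as you note, this requires upgrading $\sigma(\widehat{G},G)$-compactness to $\sigma(\widehat{\bar G},\bar G)$-compactness, and density of $G$ in $\bar G$ does not deliver this without equicontinuity, which is exactly what you are trying to prove. The paper sidesteps this entirely by reducing (v) to (i) instead of (ii): by Proposition~7.1 of \cite{GGH} the dual $G^\wedge$ of an almost metrizable group is a locally $k_\omega$-group, and since $G^\wedge$ is automatically locally quasi-convex, Corollary~\ref{c:locally-k-omega} gives $G^\wedge$ the Glicksberg property directly. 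Combined with the Glicksberg property of $G$ from \cite{Aus2}, (i) applies. This route never touches $g$-barrelledness of $G$ and avoids the completion difficulty altogether.
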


\begin{proof}
(i) immediately follows from Lemma \ref{l:evaluation-k-cont}, and (ii) follows from (i) and Proposition \ref{p:Mackey-g-barrelled}. The Claim of Theorem \ref{t:k-omega-Schur} and (i) imply (iii). (iv) follows from (i) and the Glicksberg theorem.

(v) By \cite{Aus2}, $G$ has the Glicksberg property. The dual group $G^\wedge$ of $G$ is a locally quasi-convex locally $k_\w$-group by Proposition 7.1 of \cite{GGH}. Therefore $G^\wedge$ has the Glicksberg property by Corollary \ref{c:locally-k-omega}. Now the assertion follows from (i).

(vi) follows from the definition of the evaluation map $\psi$ and the reflexivity of $G$.
\end{proof}

Let $G$ and $H$ be abelian topological groups. Denote by $\mathrm{CHom}_p(G,H)$ the group of all continuous homomorphisms from $G$ to $H$ endowed with the pointwise topology.
\begin{proposition} \label{p:CHom-Schur}
Let $G$ and $H$ be $MAP$ abelian groups with the Schur property. Then also the group $Z=\mathrm{CHom}_p(G,H)$ has the Schur property.
\end{proposition}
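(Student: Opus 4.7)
The plan is to prove the Schur property of $Z=\mathrm{CHom}_p(G,H)$ directly from the definitions, reducing convergence in $Z^+$ to pointwise convergence in $H^+$ and then invoking the Schur property of $H$ coordinatewise. Recall that a sequence $(\phi_n)$ in $Z$ converges to $\phi_0\in Z$ in the Bohr topology $\sigma(Z,\widehat{Z})$ iff $\xi(\phi_n)\to\xi(\phi_0)$ for every $\xi\in\widehat{Z}$; and it converges to $\phi_0$ in $Z$ itself iff $\phi_n(g)\to\phi_0(g)$ in $H$ for each $g\in G$. So I would show the former implies the latter.

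First I would identify a convenient supply of continuous characters of $Z$. For each $g\in G$ and each $\chi\in\widehat{H}$, define $\psi_{g,\chi}:Z\to\Ss$ by $\psi_{g,\chi}(\phi):=\chi(\phi(g))$. The evaluation $\mathrm{ev}_g:Z\to H$, $\phi\mapsto\phi(g)$, is continuous by the definition of the pointwise topology, so $\psi_{g,\chi}=\chi\circ\mathrm{ev}_g$ is a continuous character of $Z$. Since $H$ is $MAP$, the family $\{\psi_{g,\chi}\}_{g,\chi}$ separates the points of $Z$: if $\phi\neq 0$, pick $g$ with $\phi(g)\neq 0$ and then $\chi\in\widehat{H}$ with $\chi(\phi(g))\neq 1$. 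Hence $Z$ is itself $MAP$, and $Z^+$ is a well-defined Hausdorff topological group.

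Now suppose $\phi_n\to\phi_0$ in $Z^+$, with $\phi_n,\phi_0\in Z$. Since $\psi_{g,\chi}\in\widehat{Z}$ for every $g\in G$ and every $\chi\in\widehat{H}$, we have
\[
\chi(\phi_n(g))=\psi_{g,\chi}(\phi_n)\longrightarrow\psi_{g,\chi}(\phi_0)=\chi(\phi_0(g)).
\]
Fixing $g\in G$ and letting $\chi$ range over $\widehat{H}$, this says exactly that $\phi_n(g)\to\phi_0(g)$ in $H^+=(H,\sigma(H,\widehat{H}))$. By the Schur property of $H$, $\phi_n(g)\to\phi_0(g)$ already in $H$. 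Since $g\in G$ was arbitrary, $\phi_n\to\phi_0$ in the pointwise topology of $Z$, which is what we needed.

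The argument is short and I expect no real obstacle beyond the routine verification that the evaluation characters $\psi_{g,\chi}$ are continuous members of $\widehat{Z}$. I note that the plan appears to use only the Schur property of $H$ (the Schur property of $G$ plays no visible role, and $MAP$-ness of $H$ already suffices to make $Z$ a $MAP$ group); the hypothesis on $G$ in the statement may either be used in a different proof style or be there for symmetry with a later application.
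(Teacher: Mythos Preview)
Your proof is correct and follows essentially the same route as the paper's: both introduce the evaluation characters $\psi_{g,\chi}(\phi)=\chi(\phi(g))$ (the paper writes them as $\Phi^\ast_g$), verify they lie in $\widehat{Z}$, and use them to reduce Bohr convergence in $Z$ to Bohr convergence in $H$ at each point, then invoke the Schur property of $H$. The only cosmetic difference is that the paper argues by contradiction while you argue directly; your observation that the Schur property of $G$ is never used is also borne out by the paper's own proof.
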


\begin{proof}
For every $\Phi\in \widehat{H}$ and each $g\in G$, define $\Phi^\ast_g :Z\to \Ss$ by $\Phi^\ast_g (\chi):= \Phi\big( \chi(g)\big)$ ($\chi\in Z$). We claim that the homomorphism $\Phi^\ast_g $ is continuous, i.e. $\Phi^\ast_g \in\widehat{Z}$. Indeed, fix $\e>0$. Choose an open neighborhood $V$ of zero in $H$ such that $|\Phi(h)-1|<\e$ for every $h\in V$. Set $\Delta := \{ -g,0,g\}$. Now if
\[
\chi \in [\Delta;V] :=\{ \chi\in Z: \chi(t)\in V \;\; \forall t\in \Delta\},
\]
then $|\Phi^\ast_g (\chi)-1|=| \Phi\big( \chi(g)\big)-1|<\e$. Thus $\Phi^\ast_g$ is continuous.

Now suppose for a contradiction that $Z$ does not have the Schur property. Then there exists a $\sigma(Z,\widehat{Z})$-null sequence $\{\chi_n\}_{n\in\NN}$ in $Z$ which does not converge to zero in $Z$. So there is a finite subset $F$ of $G$, an open neighborhood $U$ of zero in $H$ and $\e>0$ such that
\begin{equation} \label{equ:Hom-p-1}
\chi_n \not\in [F;U] :=\{ \chi\in Z: \chi(g)\in U \;\; \forall g\in F\},
\end{equation}
for infinitely many indices $n$. Passing to a subsequence if needed, we shall assume that (\ref{equ:Hom-p-1}) holds for all $n\in\NN$. 
Since $F$ is finite, we can assume that $F=\{ g\}$ for some $g\in G$. Therefore $\chi_n(g)\not\in U$ for every $n\in\NN$. But this means that the sequence $\chi_n(g)$ is not a null sequence in $H$. By the Schur property  of $H$, there are $\Phi\in\widehat{H}$, an increasing sequence $\{n_k\}_{k\in\NN}$ in $\NN$, and $\e>0$ such that
\[
| \Phi\big( \chi_{n_k}(g)\big)-1| = |\Phi^\ast_g (\chi_{n_k})-1|\geq\e , \quad k\in\NN.
\]
Hence $\Phi^\ast_g (\chi_{n_k})\not\to 1$. Since $\Phi^\ast_g\in \widehat{Z}$ by the claim, we obtain that $\{\chi_n\}_{n\in\NN}$  is not a $\sigma(Z,\widehat{Z})$-null sequence, a contradiction.
\end{proof}


\section{Glicksberg type properties and the property of being a Mackey group} \label{sec:6}


Below we define  some  versions of respecting properties. For a $MAP$ abelian group $G$ and a topological property $\mathcal{P}$, we denote by $\mathcal{P}_{qc}(G)$ the set of all {\em quasi-convex} subsets of $G$ with $\mathcal{P}$. Recall that a locally convex space $E$ has the {\em Grothendieck} property if every weak-$\ast$ convergent sequence in the dual space $E'$ is also weakly convergent, i.e., $\mathcal{S}\big(E',\sigma(E',E)\big)=\mathcal{S}\big(E',\sigma(E',(E'_\beta)')\big)$, where $E'_\beta$ is the strong dual of $E$. Analogously, we say that $E$ has the {\em $\mathcal{P}$-Grothendieck} property if $\mathcal{P}\big(E',\sigma(E',E)\big)=\mathcal{P}\big(E',\sigma(E',(E'_\beta)')\big)$.
\begin{definition} \label{def:qc-Glicksberg} {\em
Let $(G,\tau)$ be a $MAP$ abelian group and $\mathcal{P}$ a topological property. We say that
\begin{enumerate}
\item[{\rm (i)}] $(G,\tau)$  {\em respects}  $\mathcal{P}_{qc}$ if $\mathcal{P}_{qc}(G)=\mathcal{P}_{qc}\big(G,\sigma(G,\widehat{G})\big)$;
\item[{\rm (ii)}]  $(G,\tau)^\wedge$ {\em  respects}  $\mathcal{P}^\ast$ if $\mathcal{P}(G^\wedge)=\mathcal{P}\big(\widehat{G},\sigma(\widehat{G},G)\big)$;
\item[{\rm (iii)}] $(G,\tau)^\wedge$ {\em  respects}  $\mathcal{P}_{qc}^\ast$ if $\mathcal{P}_{qc}(G^\wedge)=\mathcal{P}_{qc}\big(\widehat{G},\sigma(\widehat{G},G)\big)$;
\item[{\rm (iv)}]  $(G,\tau)$  has the {\em   $\mathcal{P}$-Pontryagin--Grothendieck property} if $\mathcal{P}\big(\widehat{G},\sigma(\widehat{G},G)\big)=\mathcal{P}\big(\widehat{G},\sigma(\widehat{G},G^{\wedge\wedge})\big)$.
\end{enumerate} }
\end{definition}

In the case $\mathcal{P}$ is the property $\mathcal{C}$ of being  a compact space and  a $MAP$ abelian group $(G,\tau)$  ($G^\wedge$)  respects $\mathcal{P}_{qc}$ ($\mathcal{P}^\ast$ or $\mathcal{P}_{qc}^\ast$, respectively), we shall say that  the group $G$ ($G^\wedge$) has the {\em $qc$-Glicksberg property} (the {\em weak-$\ast$ Glicksberg property} or the {\em weak-$\ast$ $qc$-Glicksberg property}, respectively). Clearly, if a $MAP$ abelian group $(G,\tau)$ has the Glicksberg property, then it also has the $qc$-Glicksberg property, and   if $(G,\tau)^\wedge$ has the  weak-$\ast$ Glicksberg property, then it has also the  weak-$\ast$  $qc$-Glicksberg property.

\begin{remark} \label{rem:Mackey-dual} {\em
Note that, for a $MAP$ abelian group $G$, if $G^\wedge$ has the weak-$\ast$ Glicksberg property, then it has also the Glicksberg property. But the converse is not true in general. Indeed, let $G$ be a countable dense subgroup of an infinite compact metrizable group $X$. Then $G^\wedge =X^\wedge$ by \cite{Aus,Cha}. Hence  $G^\wedge$ is a  discrete countably infinite group, so  $G^\wedge$ has the Glicksberg property. On the other hand, the group $H:= \big(\widehat{G}, \sigma(\widehat{G},G)\big)$ is a precompact metrizable group. So $H$ contains infinite compact subsets which are not compact in $G^\wedge$.}
\end{remark}


\begin{proposition} \label{p:Grothendieck-dual}
Let $E$ be a real lcs and $\mathcal{P}\in \Pp_0$. If $E$ has the $\mathcal{P}$-Grothendieck property, then $E$ has the $\mathcal{P}$-Pontryagin--Grothendieck property. But the converse is not true in general.
\end{proposition}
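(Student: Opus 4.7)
The plan is to transfer both Grothendieck-type conditions into parallel statements on $E'$ equipped with three comparable weak topologies, and then exploit a sandwich argument. First I would use Proposition \ref{p:weak*-space-group}(ii) and (v) to see that, via the canonical isomorphism $\psi:E'\to\widehat{E}$, the $\mathcal{P}$-Pontryagin--Grothendieck property for $E$ translates into
\[
\mathcal{P}\big(E',\sigma(E',E)\big) = \mathcal{P}\big(E',\sigma(E',(E',\tau_k)')\big),
\]
which parallels the $\mathcal{P}$-Grothendieck property
\[
\mathcal{P}\big(E',\sigma(E',E)\big) = \mathcal{P}\big(E',\sigma(E',(E'_\beta)')\big).
\]

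Next, since $\tau_k\leq\beta(E',E)$ on $E'$, every $\tau_k$-continuous linear functional is automatically $\beta(E',E)$-continuous, so $(E',\tau_k)'\subseteq (E'_\beta)'$; point evaluations by elements of $E$ are $\tau_k$-continuous, so $E\subseteq (E',\tau_k)'$. These inclusions produce the chain of weak topologies
\[
\sigma(E',E)\leq\sigma(E',(E',\tau_k)')\leq\sigma(E',(E'_\beta)').
\]
Since every property in $\Pp_0$ is preserved under coarsening of topologies (continuous images of compact, sequentially compact, countably compact, pseudocompact, or convergent-sequence sets retain the property), the three $\mathcal{P}$-families are nested in the reverse order. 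The $\mathcal{P}$-Grothendieck hypothesis collapses the outermost terms together, forcing the middle one to coincide with both and yielding the $\mathcal{P}$-Pontryagin--Grothendieck property.

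For the strict inequality between the two properties, I plan to take $E=c_0$. By Smith's result \cite{Smith}, every Banach space is reflexive as a topological group, hence $c_0^{\wedge\wedge}=c_0$ and the $\mathcal{P}$-Pontryagin--Grothendieck property holds trivially for every $\mathcal{P}$. On the other hand, $c_0$ classically fails the Grothendieck property even for $\mathcal{P}=\mathcal{S}$: the unit vector basis $(e_n)$ of $\ell_1=c_0'$ is weak-$\ast$ null, while Schur's theorem in $\ell_1$ rules out any weakly convergent subsequence of $(e_n)$ (such a subsequence would be norm Cauchy, yet $\|e_n-e_m\|_1=2$ for $n\neq m$). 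Consequently $\{e_n:n\in\NN\}\cup\{0\}$ belongs to $\mathcal{S}\big(c_0',\sigma(c_0',c_0)\big)$ but not to $\mathcal{S}\big(c_0',\sigma(c_0',\ell_\infty)\big)$, witnessing the failure of the $\mathcal{S}$-Grothendieck property.

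The main obstacle will be the translation step: one must verify that, under $\psi$, the Pontryagin weak topology $\sigma(\widehat{E},E^{\wedge\wedge})$ corresponds precisely to the weak topology on $E'$ induced by $(E',\tau_k)'$, and not by the possibly larger $(E'_\beta)'$. This identification is exactly the content of Proposition \ref{p:weak*-space-group}(v), so once that preliminary result is invoked correctly, the rest of the argument reduces to the routine sandwich calculation above.
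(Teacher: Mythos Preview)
Your proposal is correct and follows essentially the same route as the paper: both translate the Pontryagin--Grothendieck condition to $E'$ via Proposition~\ref{p:weak*-space-group}(ii),(v), use the inequality $\tau_k\leq\beta(E',E)$ to sandwich $\sigma(E',(E',\tau_k)')$ between $\sigma(E',E)$ and $\sigma(E',(E'_\beta)')$, and collapse via the $\mathcal{P}$-Grothendieck hypothesis. For the counterexample the paper invokes any separable non-reflexive Banach space together with the classical fact that a separable Grothendieck Banach space is reflexive, whereas you work out the special case $E=c_0$ explicitly; both arguments are valid and yours is simply a concrete instance of the paper's.
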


\begin{proof}
Let $A\in \mathcal{P}\big(\widehat{E},\sigma(\widehat{E},E)\big)$. Then, by (ii) of Proposition \ref{p:weak*-space-group}, the set $B:=\psi^{-1}(A)$ belongs to $\mathcal{P}\big(E',\sigma(E',E)\big)$. So $B\in \mathcal{P}\big(E',\sigma(E',(E'_\beta)')\big)$. Since the compact-open topology $\tau_k$ on $E'$ is weaker than the strong topology, we obtain that $B\in \mathcal{P}\big(E',\sigma(E',(E',\tau_k)')\big)$. Finally, (v) of Proposition \ref{p:weak*-space-group} implies that $A=\psi(B) \in \mathcal{P}\big(\widehat{E},\sigma(\widehat{E},E^{\wedge\wedge})\big)$.

To prove the last assertion, let $E$ be a separable non-reflexive Banach space. Being a reflexive group \cite{Smith}, $E$ trivially has the $\mathcal{S}$-Pontryagin--Grothendieck property. However, a separable Banach space with the Grothendieck property must be reflexive, so $E$ does not have the Grothendieck property.
\end{proof}

Below we show that the  weak-$\ast$ $qc$-Glicksberg property is dually connected with the property being a Mackey group. Let us recall the definition of Mackey groups.
Two topologies  $\tau$ and $\nu$ on an abelian group $G$  are said to be {\em compatible } if $\widehat{(G,\tau)}=\widehat{(G,\nu)}$.
Being motivated by the classical Mackey--Arens theorem the following notion was  introduced and studied in \cite{CMPT}: a locally quasi-convex abelian group $(G,\tau)$ is called a {\em Mackey group} if for every compatible locally quasi-convex group topology $\nu$ on $G$ it follows that $\nu\leq \tau$. By  Theorem~4.2 of \cite{CMPT}, every $g$-barrelled group is Mackey. The next proposition generalizes this result.

\begin{proposition} \label{p:g-barrelled-Mackey}
Let $(G,\tau)$ be a locally quasi-convex abelian group. If  every $A\in \mathcal{C}_{qc}\big(\widehat{G},\sigma(\widehat{G},G)\big)$ is equicontinuous (for example, $G$ is $g$-barrelled), then $G$ is a Mackey group.
\end{proposition}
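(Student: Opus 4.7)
The plan is to reduce the Mackey property to the standard polar duality argument, using the quasi-convex compactness hypothesis to produce $\tau$-neighborhoods from $\nu$-neighborhoods. Let $\nu$ be any locally quasi-convex group topology on $G$ compatible with $\tau$, so that $\widehat{(G,\nu)}=\widehat{(G,\tau)}=\widehat{G}$. I must show that every $\nu$-neighborhood of zero is also a $\tau$-neighborhood of zero.

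First I would pick an arbitrary quasi-convex $\nu$-neighborhood $U$ of zero (which exists since $\nu$ is locally quasi-convex). Applying Fact~\ref{f:Mackey-Ban} to the group $(G,\nu)$ yields that $U^\triangleright$ is a quasi-convex compact subset of $\big(\widehat{G},\sigma(\widehat{G},G)\big)$; compactness here uses the identification $\widehat{(G,\nu)}=\widehat{G}$ and the fact that $\sigma(\widehat{G},G)$ does not depend on whether one views the characters as continuous for $\tau$ or for $\nu$. Hence $U^\triangleright \in \mathcal{C}_{qc}\big(\widehat{G},\sigma(\widehat{G},G)\big)$.

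Next I would invoke the hypothesis: $U^\triangleright$ is equicontinuous on $(G,\tau)$. By Fact~\ref{f:Mackey-Ban} applied to $(G,\tau)$, there is a $\tau$-neighborhood $V$ of zero such that $U^\triangleright \subseteq V^\triangleright$. Taking inverse polars $\triangleleft$ reverses the inclusion, giving $V^{\triangleright\triangleleft} \subseteq U^{\triangleright\triangleleft}$. Since $V \subseteq V^{\triangleright\triangleleft}$ always holds and $U$ is quasi-convex so that $U^{\triangleright\triangleleft}=U$, I conclude $V\subseteq U$. Therefore $U$ contains the $\tau$-neighborhood $V$, which makes $U$ a $\tau$-neighborhood of zero.

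Since the quasi-convex $\nu$-neighborhoods form a base at zero for $\nu$, this shows $\nu\leq\tau$, proving that $(G,\tau)$ is Mackey. The parenthetical $g$-barrelled case is then automatic: a $g$-barrelled group has every $\sigma(\widehat{G},G)$-compact subset equicontinuous, hence a fortiori every quasi-convex such subset. The main conceptual point --- and the only place any care is needed --- is the identification of the dual groups and topologies under the compatibility assumption, so that Fact~\ref{f:Mackey-Ban} can be applied to both $(G,\tau)$ and $(G,\nu)$ with the polar operations $\triangleright,\triangleleft$ computed in the common character group $\widehat{G}$.
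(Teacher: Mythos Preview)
Your proof is correct and follows essentially the same approach as the paper's: both take a quasi-convex $\nu$-neighborhood $U$, use Fact~\ref{f:Mackey-Ban} to see that $U^\triangleright$ is quasi-convex and $\sigma(\widehat{G},G)$-compact, invoke the hypothesis to get equicontinuity with respect to $\tau$, and then conclude via the bipolar identity $U=U^{\triangleright\triangleleft}$ that $U$ is a $\tau$-neighborhood. You merely spell out the last implication (finding $V$ with $U^\triangleright\subseteq V^\triangleright$ and deducing $V\subseteq U$) more explicitly than the paper does.
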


\begin{proof}
Let $\nu$ be a locally quasi-convex group topology on $G$ compatible with $\tau$ and let $U$ be a quasi-convex $\nu$-neighborhood of zero. Then $U^{\triangleright}$ is $\sigma(\widehat{G},G)$-compact and quasi-convex by Fact \ref{f:Mackey-Ban}. So $U^{\triangleright}$ is equicontinuous (with respect to the original topology $\tau$).  Fact \ref{f:Mackey-Ban} implies that $U=U^{\triangleright\triangleleft}$ is also a $\tau$-neighborhood of zero. Thus $\nu\leq\tau$ and hence $G$ is a Mackey group.
\end{proof}

Below we obtain another sufficient condition of being a Mackey group.
\begin{proposition} \label{p:Mackey-weak-qc-Glick}
Let $(G,\tau)$ be a locally quasi-convex group such that the canonical homomorphism $\alpha_G$ is continuous. If $(G,\tau)^\wedge$ has the weak-$\ast$ $qc$-Glicksberg property, then  $(G,\tau)$ is a Mackey group. Consequently, if a reflexive abelian group $(G,\tau)$ is such that $(G,\tau)^\wedge$ has the $qc$-Glicksberg property (in particular, the Glicksberg property), then  $(G,\tau)$ is a Mackey group.
\end{proposition}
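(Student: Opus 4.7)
The plan is to imitate the proof of Proposition \ref{p:g-barrelled-Mackey}, replacing the equicontinuity step (which came from $g$-barrelledness) by a polar computation through $\alpha_G$. Let $\nu$ be a locally quasi-convex group topology on $G$ compatible with $\tau$, and let $U$ be an arbitrary quasi-convex $\nu$-neighborhood of zero. Because $\tau$ and $\nu$ have the same characters, $U^\triangleright$ lives in the common dual group $\widehat{G}$, and by Fact \ref{f:Mackey-Ban} applied in $(G,\nu)$ the set $U^\triangleright$ is quasi-convex and $\sigma(\widehat{G},G)$-compact. Since $(G,\tau)^\wedge$ has the weak-$\ast$ $qc$-Glicksberg property, this forces $U^\triangleright$ to be a quasi-convex compact subset of $G^\wedge=(\widehat{G},\tau_k)$.

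Next I would transfer this back to $G$ using $\alpha_G$. By the definition of the compact-open topology on $G^{\wedge\wedge}$, the polar $(U^\triangleright)^\triangleright\subseteq G^{\wedge\wedge}$ is a neighborhood of zero in $G^{\wedge\wedge}$. Continuity of $\alpha_G\colon G\to G^{\wedge\wedge}$ then gives that $\alpha_G^{-1}\bigl((U^\triangleright)^\triangleright\bigr)$ is a $\tau$-neighborhood of zero in $G$. A direct unwinding of the definitions shows
\[
\alpha_G^{-1}\bigl((U^\triangleright)^\triangleright\bigr)=\{g\in G:\chi(g)\in\Ss_+\ \forall\chi\in U^\triangleright\}=U^{\triangleright\triangleleft},
\]
and since $U$ is quasi-convex we have $U^{\triangleright\triangleleft}=U$. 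Hence $U$ is a $\tau$-neighborhood of zero. As quasi-convex $\nu$-neighborhoods of zero form a base at zero for $\nu$, we conclude $\nu\le\tau$, which proves that $(G,\tau)$ is a Mackey group.

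For the ``consequently'' part, suppose $G$ is reflexive and $G^\wedge$ has the $qc$-Glicksberg property. Then $\alpha_G$ is a topological isomorphism, so in particular continuous, and moreover $G^{\wedge\wedge}=G$ gives
\[
(G^\wedge)^+=\bigl(\widehat{G},\sigma(\widehat{G},G^{\wedge\wedge})\bigr)=\bigl(\widehat{G},\sigma(\widehat{G},G)\bigr),
\]
so the $qc$-Glicksberg property of $G^\wedge$ coincides with its weak-$\ast$ $qc$-Glicksberg property. The first assertion then applies.

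The only mildly subtle point is the identification $\alpha_G^{-1}((U^\triangleright)^\triangleright)=U^{\triangleright\triangleleft}$; everything else is a direct application of Fact \ref{f:Mackey-Ban} and the bipolar-type equality $U^{\triangleright\triangleleft}=U$ for quasi-convex $U$, so I do not anticipate a genuine obstacle beyond keeping the polars straight.
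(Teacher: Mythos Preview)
Your proof is correct and follows essentially the same route as the paper's: take a quasi-convex $\nu$-neighborhood $U$, use Fact~\ref{f:Mackey-Ban} to get that $K:=U^\triangleright$ is quasi-convex and $\sigma(\widehat{G},G)$-compact, apply the weak-$\ast$ $qc$-Glicksberg property to make $K$ compact in $G^\wedge$, and then pull back $K^\triangleright\subseteq G^{\wedge\wedge}$ through $\alpha_G$ to recover $U=U^{\triangleright\triangleleft}=\alpha_G^{-1}(K^\triangleright)$ as a $\tau$-neighborhood. Your treatment of the ``consequently'' part via the identification $(G^\wedge)^+=\bigl(\widehat{G},\sigma(\widehat{G},G)\bigr)$ for reflexive $G$ is also exactly what the paper does.
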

\begin{proof}
Let $\nu$ be a locally quasi-convex topology on $G$ compatible with $\tau$ and let $U$ be a closed quasi-convex $\nu$-neighborhood of zero. Fact \ref{f:Mackey-Ban} implies that the quasi-convex subset $K:=U^\triangleright$ of $\widehat{G}$ is $\sigma(\widehat{G},G)$-compact, and hence $K$ is a compact subset of  $G^\wedge$ by the weak-$\ast$ $qc$-Glicksberg property. Note that, by definition, $K^\triangleright$ is a neighborhood of zero in $G^{\wedge\wedge}$. As $\alpha_G$ is continuous,  $U=K^\triangleleft =\alpha_G^{-1}(K^\triangleright)$ is a  $\tau$-neighborhood of zero in $G$. Hence $\nu\leq\tau$. Thus $(G,\tau)$ is a Mackey group.

The last assertion follows from the fact that the weak-$\ast$ $qc$-Glicksberg property coincides with the $qc$-Glicksberg property for any reflexive group.
\end{proof}

\begin{remark} \label{rem:Mackey-c0(S)} {\em
In the last assertion of Proposition \ref{p:Mackey-weak-qc-Glick} the reflexivity of $G$ is essential. Indeed, let $G$ be a proper dense subgroup of a compact metrizable abelian group $X$. Then $G^\wedge =X^\wedge$ (see \cite{Aus,Cha}), and hence the discrete group $G^\wedge$ has the Glicksberg property. Denote by $\mathfrak{p}_0$ the product topology on the group $c_0 (\Ss):=\{ (z_n)\in \Ss^\NN: z_n\to 1\}$ induced from $\Ss^\NN$. Then, by \cite[Theorem 1]{Gab}, there is a locally quasi-convex topology  $\mathfrak{u}_0$ on $c_0 (\Ss)$ compatible with $\mathfrak{p}_0$ such that $\mathfrak{p}_0 <\mathfrak{u}_0$. Thus the group $G:=\big( c_0 (\Ss),\mathfrak{p}_0)$ is a precompact arc-connected metrizable group such that $G^\wedge$ has the Glicksberg property, but $G$ is {\em not} a Mackey group. Consequently, $G^\wedge$ does not have the  weak-$\ast$ $qc$-Glicksberg property. }
\end{remark}



Every real barrelled locally convex space  
is a Mackey group by \cite{CMPT} (this also follows from Propositions \ref{p:space-group-barrelled} and \ref{p:g-barrelled-Mackey}). Since every real reflexive locally convex space  $E$ is barrelled by \cite[Proposition 11.4.2]{Jar}, we obtain that $E$ is a Mackey group. This result motivates the following problem.
\begin{problem} \label{q:Mackey-refl}
Characterize reflexive abelian groups which are Mackey groups.
\end{problem}

Not every reflexive group is Mackey, see \cite{Cha-MP}. Moreover, there exists a reflexive group which does not admit a Mackey group topology, see \cite{Aus3,Gabr-A(s)-Mackey}.
However, if a reflexive group $G$ is of finite exponent, it is a Mackey group as the following theorem shows.
\begin{theorem} \label{t:Mackey-finite-exp}
A reflexive abelian group $(G,\tau)$ of finite exponent is a Mackey group.
\end{theorem}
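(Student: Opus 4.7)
The plan is to assemble the conclusion directly from two ingredients already developed in the paper: Proposition \ref{p:nuclear-Mackey}(ii), which controls the respecting properties of locally quasi-convex groups of finite exponent and of their duals, and Proposition \ref{p:Mackey-weak-qc-Glick}, which upgrades a Glicksberg-type statement about the dual of a reflexive group to the Mackey property.

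First I would observe that since $(G,\tau)$ is reflexive, it is in particular locally quasi-convex; indeed, reflexivity requires $\alpha_G$ to be a topological isomorphism onto $G^{\wedge\wedge}$, which is locally quasi-convex as the dual of an abelian topological group. Next, having that $G$ is a locally quasi-convex abelian group of finite exponent, I invoke Proposition \ref{p:nuclear-Mackey}(ii) to conclude that the dual group $(G,\tau)^\wedge$ respects all properties $\mathcal{P}\in \Pp_0$. Since $\mathcal{C}\in \Pp_0$, this says exactly that $(G,\tau)^\wedge$ has the Glicksberg property, i.e.\ the compact subsets of $G^\wedge$ coincide with the compact subsets of $\big(\widehat G,\sigma(\widehat G,G^{\wedge\wedge})\big)$ (using reflexivity to identify $\sigma(\widehat G,G)$ with $\sigma(\widehat G,G^{\wedge\wedge})$).

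Finally, I apply the last assertion of Proposition \ref{p:Mackey-weak-qc-Glick}: a reflexive abelian group whose dual has the Glicksberg property (hence a fortiori the $qc$-Glicksberg property) is a Mackey group. This yields the desired conclusion that $(G,\tau)$ is Mackey. There is essentially no obstacle here; the whole content has been isolated into the two propositions above, and the argument is a two-line composition. The only thing to be careful about is to check that reflexivity of $G$ does give the continuity hypothesis on $\alpha_G$ required by Proposition \ref{p:Mackey-weak-qc-Glick} (it does, trivially, since $\alpha_G$ is a topological isomorphism), and that finite exponent of $G$ is preserved for the purpose of applying Proposition \ref{p:nuclear-Mackey}(ii), which is immediate.
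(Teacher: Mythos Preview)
Your proposal is correct and follows essentially the same route as the paper: invoke Proposition~\ref{p:nuclear-Mackey}(ii) (using that a reflexive group is locally quasi-convex) to get that $(G,\tau)^\wedge$ has the Glicksberg property, then apply the last assertion of Proposition~\ref{p:Mackey-weak-qc-Glick} to conclude that $(G,\tau)$ is Mackey. The paper's proof is the same two-line composition, only stated more tersely.
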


\begin{proof}
Since $(G,\tau)$ is locally quasi-convex, Proposition \ref{p:nuclear-Mackey} implies that  $(G,\tau)^\wedge$ has the Glicksberg property. Thus  $(G,\tau)$ is a Mackey group by Proposition \ref{p:Mackey-weak-qc-Glick}.
\end{proof}

\begin{corollary} \label{c:Mackey-C(X,F)-discrete}
Let $X$ be a zero-dimensional realcompact $k$-space and let $\ff$ be a finite abelian group. Then  $\CC(X,\ff)$ is  a Mackey group.
\end{corollary}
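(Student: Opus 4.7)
The plan is to reduce the corollary directly to Theorem \ref{t:Mackey-finite-exp} by verifying that the group $\CC(X,\ff)$ satisfies its two hypotheses: being reflexive and having finite exponent.

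First I would check that $\CC(X,\ff)$ has finite exponent. Since $\ff$ is a finite abelian group, it has some exponent $n\in\NN$, meaning $n a = 0$ for every $a\in\ff$. Then for every $f\in C(X,\ff)$ and every $x\in X$ we have $(nf)(x)=nf(x)=0$, so $nf=0$ in the pointwise addition of $\CC(X,\ff)$. Hence $\CC(X,\ff)$ is an abelian group of exponent dividing $n$, which is finite.

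Next I would invoke the Pol--Smentek theorem cited in the paper: for every zero-dimensional realcompact $k$-space $X$ and every finitely generated discrete abelian group $D$, the group $\CC(X,D)$ is reflexive. A finite abelian group $\ff$ is in particular a finitely generated discrete abelian group, so applying the Pol--Smentek result with $D=\ff$ immediately gives that $\CC(X,\ff)$ is reflexive.

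Finally, with both hypotheses verified, Theorem \ref{t:Mackey-finite-exp} applies to the reflexive abelian group $\CC(X,\ff)$ of finite exponent and yields that $\CC(X,\ff)$ is a Mackey group. There is essentially no obstacle here, as the corollary is a direct combination of a cited reflexivity result with the theorem just proved; the only point worth stating explicitly is that finiteness of $\ff$ transfers to finiteness of the exponent of the function group, which is immediate from pointwise computation.
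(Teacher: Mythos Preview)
Your proposal is correct and follows exactly the paper's own argument: the paper's proof simply says the assertion follows from the Pol--Smentek reflexivity result and Theorem \ref{t:Mackey-finite-exp}, which is precisely the combination you spell out. The only thing you add is the explicit (and immediate) verification that $\CC(X,\ff)$ has finite exponent, which the paper leaves implicit.
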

\begin{proof}
The assertion immediately follows from the main result of \cite{PolSmen} and Theorem \ref{t:Mackey-finite-exp}.
\end{proof}

\begin{remark} {\em
Any metrizable and  precompact abelian group of finite exponent is a Mackey group, see \cite[Example 4.4]{BTAVM}. If $G$ is a metrizable reflexive group, then $G$ must be complete by \cite[Corollary 2]{Cha}. So there are non-reflexive Mackey groups of finite exponent. Moreover, since the group $G$ in Example \ref{exa:non-reflexive-g-barrelled} is not $c_0$-barrelled, we obtain that there are Mackey groups which are not $g$-barrelled.}
\end{remark}

Recall that an lcs $E$ has the compact convex property $(ccp)$ if the absolutely convex hull of any compact subset of $E$ is relatively compact in $E$. Analogously we say that a locally quasi-convex abelian group $G$ has a {\em compact quasi-convex property $(cqcp)$} if  the quasi-convex hull of any compact subset of $G$ is relatively compact in $G$. Clearly, every real lcs $E$  with $(ccp)$ has also $(cqcp)$.
\begin{proposition} \label{p:cqcp-property}
Let $G$ be a locally quasi-convex abelian group. If $G$ is $g$-barrelled, then the group $\big( \widehat{G}, \sigma(\widehat{G},G)\big)$ has $(cqcp)$, but the converse is not true in general.
\end{proposition}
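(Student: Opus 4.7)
The plan for the forward implication is to chase definitions using Fact \ref{f:Mackey-Ban}. Given a compact subset $K$ of $H := \bigl(\widehat{G}, \sigma(\widehat{G},G)\bigr)$, the $g$-barrelled hypothesis makes $K$ equicontinuous, and Fact \ref{f:Mackey-Ban} then produces a neighborhood $V$ of $0$ in $G$ with $K \subseteq V^\triangleright$, where $V^\triangleright$ is a quasi-convex compact subset of $G^\wedge$. Since the compact-open topology on $\widehat{G}$ is finer than $\sigma(\widehat{G},G)$, the set $V^\triangleright$ remains compact in $H$ as well.

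The next step is to verify that $V^\triangleright$ is also quasi-convex when regarded as a subset of $H$ (one may first replace $V$ by the quasi-convex neighborhood $V^{\triangleright\triangleleft} = \mathrm{qc}(V)$, which is still a neighborhood by local quasi-convexity of $G$). Here one uses that $\widehat{H}$ identifies with $G$ via $\alpha_G$, so for any $\chi_0 \in \widehat{G} \setminus V^\triangleright$ one picks $v \in V$ with $\chi_0(v) \notin \Ss_+$, and $\alpha_G(v) \in \widehat{H}$ witnesses quasi-convexity. Hence $\mathrm{qc}_H(K) \subseteq V^\triangleright$. Writing
\[
\mathrm{qc}_H(K) \;=\; K^{\triangleleft\triangleright} \;=\; \bigcap_{g \in K^\triangleleft} \{\chi \in \widehat{G} : \chi(g) \in \Ss_+\}
\]
exhibits $\mathrm{qc}_H(K)$ as $\sigma(\widehat{G},G)$-closed, so being closed inside the compact $V^\triangleright$ forces it to be compact in $H$. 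This yields $(cqcp)$.

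The converse failure will be the main obstacle. The task is to exhibit a locally quasi-convex abelian group $G$ that is not $g$-barrelled yet whose pointwise-convergence dual $H$ still has $(cqcp)$. A natural candidate is the precompact metrizable group $G = C_p(\mathbf{s}, \ZZ(2))$ from Example \ref{exa:non-reflexive-g-barrelled}, which is not $c_0$-barrelled (and therefore not $g$-barrelled) and whose dual $\widehat{G} \cong \bigoplus_{\NN} \ZZ(2)$ admits an explicit combinatorial description through the sequence $\{\chi_n\}$ constructed there. The plan is to analyse, for any $\sigma(\widehat{G},G)$-compact $K$, the subgroup $K^\triangleleft \leq G$ and show that its block structure forces $K^{\triangleleft\triangleright}$ to sit inside a naturally $\sigma(\widehat{G},G)$-compact quasi-convex subset of $H$. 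Should this explicit analysis prove too intricate, a backup route is to produce a precompact $G$ for which $H$ itself is precompact, so that $(cqcp)$ for $H$ becomes automatic while $g$-barrelledness is still blocked by a tailored non-equicontinuous $\sigma(\widehat{G},G)$-compact subset of $\widehat{G}$.
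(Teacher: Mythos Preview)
Your forward implication is correct and follows the same idea as the paper, though the paper is more direct: once $K$ is equicontinuous, $K^\triangleleft$ is itself a neighborhood of zero in $G$, so Fact~\ref{f:Mackey-Ban} applied to $U=K^\triangleleft$ gives immediately that $K^{\triangleleft\triangleright}=\mathrm{qc}_H(K)$ is compact and quasi-convex in $G^\wedge$ (hence in $H$). This bypasses your intermediate step of passing through $V^\triangleright$, checking its quasi-convexity in $H$, and then arguing via closedness---but the substance is the same.

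For the failure of the converse, the paper does not construct anything: it simply refers to Remark~15 of \cite{CMPT}. Your attempt here has a genuine gap. The ``backup route'' is incorrect as stated: precompactness of $H$ does \emph{not} make $(cqcp)$ automatic, because relatively compact means having compact closure in $H$ itself, not in its completion. A precompact non-compact group typically has compact subsets whose quasi-convex hulls fail to have compact closure. Your primary candidate $G=C_p(\mathbf{s},\ZZ(2))$ may or may not work, but the analysis is left at the level of a plan; you would still need to verify that every $\sigma(\widehat{G},G)$-compact $K\subseteq\bigoplus_\NN\ZZ(2)$ has $K^{\triangleleft\triangleright}$ relatively compact in $H$, and nothing in your outline addresses this. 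So the converse part is not yet a proof.
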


\begin{proof}
Let $K$ be a compact subset of $H:=\big( \widehat{G}, \sigma(\widehat{G},G)\big)$. Then $K$ is equicontinuous by the $g$-barrelledness of $G$. Now Fact \ref{f:Mackey-Ban} implies that $K^{\triangleleft\triangleright}$ is $\sigma(\widehat{G},G)$-compact and quasi-convex. Thus $H$ has $(cqcp)$. For the last assertion, see Remark 15 of \cite{CMPT}.
\end{proof}

\begin{remark} {\em
Let $(G,\tau)$ be a $MAP$ abelian group which respects $\mathcal{P}\in\mathfrak{P}$ and let $\nu$ be a group topology on $G$ compatible with $\tau$. If $\tau^+\leq \nu\leq \tau$, then clearly $(G,\nu)$ respects $\mathcal{P}$ as well. But if $\nu >\tau$ it may happen that $(G,\nu)$ does not respect $\mathcal{P}$. Indeed, let $(G,\nu)$ be a real Banach space without the Schur property and let $\tau=\sigma(E,E')$. Then the space $(G,\tau)$ has the Schur property by Proposition \ref{p-Weak=+}. For a more non-trivial example, consider the free lcs $L(\mathbf{s})$ which respects all properties $\mathcal{P}\in\Pp_0$ by Theorem \ref{t:L(X)-Schur}, however the space $\big(L(\mathbf{s}), \mu(L(\mathbf{s}),C(\mathbf{s}))\big)$ does not have the Schur property, see Step 3 of the proof of Theorem 2.4 in \cite{Gabr-L(s)}.}
\end{remark}

%




\bibliographystyle{amsplain}


\end{document}